\date{}
\newtheorem{theorem}{Theorem}[section]
\newtheorem{definition}[theorem]{Definition}
\newtheorem{ex}[theorem]{Example}
\newtheorem{lemma}[theorem]{Lemma}
\newtheorem{proposition}[theorem]{Proposition}
\newtheorem{corollary}[theorem]{Corollary}
\newtheorem{remark}[theorem]{Remark}
\newcommand{\m}{\mathfrak{m}}
\renewcommand{\S}{\mathbb{S}}
\newcommand{\D}{\mathbb{D}}
\newcommand{\N}{\mathbb{N}}
\newcommand{\Z}{\mathbb{Z}}
\newcommand{\R}{\mathbb{R}}
\newcommand{\C}{\mathbb{C}}
\newcommand{\SA}{\mathcal{A}}
\newcommand{\SM}{\mathcal{M}}
\newcommand{\SG}{\mathcal{G}}
\newcommand{\SX}{\mathcal{X}}
\newcommand{\La}{\Lambda}
\newcommand{\la}{\lambda}
\newcommand{\Br}{\operatorname{Br}}
\newcommand{\Aut}{\operatorname{Aut}}
\newcommand{\DT}{\operatorname{DT}}
\newcommand{\Symp}{\operatorname{Symp}}
\newcommand{\PSL}{\operatorname{PSL}}
\newcommand{\Gr}{\operatorname{Gr}}
\newcommand{\uf}{\operatorname{uf}}
\newcommand{\st}{\text{st}}
\newcommand{\Trace}{\operatorname{Tr}}
\newcommand{\Mod}{\operatorname{Mod}}
\newcommand{\Leg}{\operatorname{Leg}}
\newcommand{\Hom}{\operatorname{Hom}}
\newcommand{\w}{\mathfrak{w}}
\newcommand{\FM}{\mathfrak{M}}
\begin{document}

	\title{Legendrian Loops and Cluster Modular Groups}
	
	\author{James Hughes}
\address{Duke University, Dept. of Mathematics, Durham, NC 27708, USA}
\email{jhughes@math.duke.edu}
\begin{abstract}
    This work studies Legendrian loop actions on exact Lagrangian fillings of Legendrian links in $(\R^3, \xi_{\st})$. By identifying the induced action of Legendrian loops as generators of cluster modular groups, we establish the existence of faithful group actions on the exact Lagrangian fillings of several families of Legendrian positive braid closures, including all positive torus links. In addition, we leverage a Nielsen-Thurston-like classification of cluster automorphisms to provide new combinatorial and algebraic tools for proving that a Legendrian loop action has infinite order.    
\end{abstract}



\keywords{}

	\maketitle

\section{Introduction}

 In this work, we study actions of Legendrian loops of Legendrian links in $(\R^3, \xi_{\st})$ on the Lagrangian concordance monoid. We first show that by interpreting Legendrian loops as generators of cluster modular groups, we can explicitly realize these group actions for several families of Legendrian positive braid closures. Using this cluster-theoretic perspective, we also provide new combinatorial and algebraic tools for determining whether a Legendrian loop has infinite order. We then apply these tools to show that a large class of Legendrian loop actions have infinite order, recovering several previously known results.

\subsection{Context}
Recent work of Casals and Gao produced infinitely many exact Lagrangian fillings for many Legendrian torus links \cite{CasalsGao}, providing the first such examples. They generate these fillings by understanding Legendrian loops, that is, elements of the fundamental group of the space of Legendrian embeddings of a given knot or link.  
Legendrian loops produce an action on exact Lagrangian fillings by concatenating the trace of the loop with an existing filling. In addition to the result cited above, Legendrian loops have been used to produce various novel examples of families of Legendrian and Lagrangian  submanifolds, including the construction of infinitely many closed exact Lagrangian submanifolds of Weinstein manifolds \cite{CasalsGao, CasalsNg}, infinitely many spherical spun Legendrians \cite{GolovkoSpuns}, and infinitely many Legendrian spheres in higher dimensions \cite{capovillasearle2023newton}.

Despite the recent spate of results in this direction, the work of Casals and Gao remains the only one to describe a non-cyclic group acting on the Lagrangian concordance monoid. In particular, they obtain faithful $\PSL_2(\Z)$ and $\Mod(\Sigma_{0, 4})$ actions in the case of Legendrian torus links $\La(3,6)$ and $\La(4,4)$ \cite[Theorem 1.1]{CasalsGao}. While their work predates the construction of a cluster structure on the decorated sheaf moduli $\FM(\La)$ \cite[Theorem 1.1]{CasalsWeng}, they draw heavily on the work of Fraser \cite{fraser2018braid} in understanding cluster automorphisms of top-dimensional positroid cells of the Grassmannian.  

Since the initial work of Casals and Gao, several works have appeared relating cluster theory and the classification of exact Lagrangian fillings of positive braid closures. 
These works include the proposal of a conjectural $ADE$ classification \cite[Conjecture 5.3]{CasalsLagSkel}, the existence of infinitely many exact Lagrangian fillings for several families of Legendrians links \cite{CasalsZaslow, GSW}, computations of the cohomology of the augmentation variety \cite{CGGLSS}, and the existence of the conjectural number of fillings of several families of Legendrian links \cite{ABL22, Hughes2021, CasalsGao23}. 

\subsection{Main results}

The main goal of this work is to initiate a systematic study of groups arising from Legendrian loop actions on the Lagrangian concordance monoid. 
We do so by interpreting Legendrian loops as \emph{cluster} automorphisms of the decorated sheaf moduli $\FM(\La)$. Cluster automorphisms form a group called the cluster modular group, and the results of Casals and Gao can be understood as producing a subgroup of the cluster modular group of the sheaf moduli $\FM(\La)$. 

\subsubsection{Cluster modular groups generated by Legendrian loops}

For a given cluster algebra $\mathbb{A}$, the cluster modular group $\SG(\mathbb{A})$ is only known in a limited number of cases. To describe explicit groups in these cases, we start by restricting our attention to finite, affine, and extended affine Dynkin type cluster varieties arising as the sheaf moduli of certain Legendrian links. These Legendrian links are Legendrian isotopic to the rainbow closures $\La(\beta)$ of certain positive braids $\beta\in \Br_n^+$. See Figure \ref{fig: BraidClosure} for a depiction of these Legendrian braid closures as well as the Legendrian isotopic $(-1)$-closures of $\beta\Delta_n^2$, as pictured in Figure \ref{fig: BraidClosure}, with the full twist $\Delta_n^2$ given by the square of $\Delta_n=\prod_{i=1}^n\prod_{j=1}^{n-i} \sigma_j$. Given a Dynkin type $X$, we define the Legendrian link $\La(X)$ to be the $(-1)$-framed closure of the corresponding braid: 
\begin{itemize}
    \item $A_{n-1}= \sigma_1^{n+2}$
    \item $D_{n}= \sigma_1^{n-2}\sigma_2\sigma_1^2\sigma_2\Delta_3^2$ for $n\geq 4$
    \item $E_6, E_7, E_8=\sigma_1\sigma_2^2\sigma_1^2\sigma_2^{i-3}\Delta_3^2$ for $i=6, 7,$ and $8$, respectively 
    \item $\tilde{D}_n=(\sigma_2\sigma_1\sigma_3\sigma_2)^2\sigma_1^{n-4}\Delta_3^2$ for $n\geq 4$
    \item $\tilde{E}_n =(\sigma_2\sigma_1\sigma_3\sigma_2)^2\sigma_3\sigma_1^{n-5}\Delta_3^2$ for $n\in\{6, 7, 8\}$
    \item $E_7^{(1,1)}=(\sigma_2\sigma_1\sigma_3\sigma_2)^2\sigma_3^2\sigma_1^2\Delta_3^2$
      \item $E_8^{(1,1)}=(\sigma_2\sigma_1\sigma_3\sigma_2)^2\sigma_3\sigma_1^4\Delta_3^2$ 
\end{itemize}

Here $\tilde{D}$ and $\tilde{E}$ denote affine Dynkin types $D$ and $E$. Similarly, the superscript  
$(1, 1)$ indicates extended affine type. 
Note that  $\La(E_7^{(1, 1)})$ and $\La(E_8^{(1,1)})$ are Legendrian isotopic to the Legendrian torus links $\La(4, 4)$ and $\La(3, 6)$, respectively and that all of the above links are isotopic to rainbow closures $\La(\beta)$ for some $\beta\in \Br_n^+$. 

 	\begin{center}
		\begin{figure}[h!]{ \includegraphics[width=.8\textwidth]{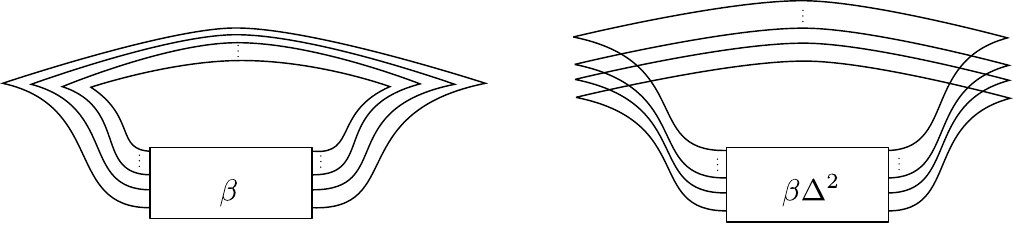}}\caption{Front projections of the Legendrian isotopic links given as the rainbow closure (left) and $(-1)$-framed closure (right) of the positive braids $\beta$ and $\beta\Delta^2$. Here $\Delta$ denotes a half twist of the braid.}
			\label{fig: BraidClosure}\end{figure}
	\end{center} 

Let us denote by $\mathcal{H}$ the set of Legendrian links above. 
To obtain a generating set for the cluster modular group, we must also include the cluster automorphism known as the Donaldson-Thomas (DT) transformation, first considered in this contact-geometric context in \cite[Section 4.1]{GSW} and then described geometrically by Casals and Weng in \cite[Section 5.4]{CasalsWeng}. The square of the DT transformation can be realized as a Legendrian loop, but a single power is defined as the composition of a Legendrian isotopy with a strict contactomorphism. Studying the induced actions of Legendrian loops and the DT transformation, we show that Legendrian loops generate a finite index subgroup of the cluster modular group in the cases we consider.

\begin{theorem}\label{thm: gens} 
For $\La\in \mathcal{H}$, Legendrian loop actions generate a finite-index subgroup of the cluster modular group $\SG(\FM(\La))$. Moreover, for any  $\La\in\mathcal{H}$ excluding $\La(\tilde{D}_n)$, Legendrian loop actions and the $\DT$ transformation generate the group $\SG(\FM(\La))$. 
\end{theorem}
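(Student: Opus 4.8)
The plan is to treat each Dynkin type $X$ separately, comparing the image of Legendrian loops under the Casals--Weng correspondence with an explicit description of $\SG(\FM(\La(X)))$. The common starting point is that the monodromy of any Legendrian loop acts on the seed attached to $\FM(\La)$ by a sequence of quiver mutations followed by a seed permutation, i.e. as a cluster transformation; this yields a homomorphism $\rho$ from the group of Legendrian loops into $\SG(\FM(\La))$, and the two assertions become that $\op{im}\rho$ has finite index and that $\langle \op{im}\rho,\, \DT\rangle = \SG(\FM(\La))$ away from type $\tilde{D}_n$.

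The first input I would assemble is an explicit description of $\SG(\FM(\La(X)))$ in each type, drawn from the cluster literature. For the finite types $A_{n-1}, D_n, E_6, E_7, E_8$ the exchange graph is finite and $\SG$ is a known finite group, so the finite-index statement is automatic and only the full-group statement carries content there. For the affine and extended affine types the cluster variety is modeled on a marked surface and $\SG$ is the associated infinite mapping class group; in particular the extended affine cases recover $\Mod(\Sigma_{0,4})$ for $E_7^{(1,1)}=\La(4,4)$ and $\PSL_2(\Z)$ for $E_8^{(1,1)}=\La(3,6)$, in agreement with Casals--Gao.

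The second input is the computation of $\rho$ on a convenient generating family of loops, together with the image of $\DT$. I would use the rotation loop obtained from a full $2\pi$ rotation of the rainbow closure, whose cluster action is a single explicit mutation--permutation cycle, supplemented by the local loops supported near the crossings of $\beta$; via the combinatorial dictionary of Casals--Weng each of these translates into a mutation sequence that I would record on the seed of $\FM(\La)$. The image of $\DT$ is the reddening (Coxeter-type) sequence of Casals--Weng, whose square is realized by a Legendrian loop, consistent with the remark that $\DT^2$ is Legendrian-realizable while $\DT$ itself is not. Matching these sequences against the generators found in the first step should show that $\op{im}\rho$ contains a finite-index subgroup coming from the mutation part of $\SG$, giving the finite-index statement; adjoining $\DT$ then supplies exactly the finitely many coset representatives (the diagram automorphisms) missing from $\op{im}\rho$ in every type except $\tilde{D}_n$.

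The exclusion of $\tilde{D}_n$ and the main obstacle are the same phenomenon. The affine $D$ quiver carries an order-two symmetry at each of its two branch nodes, producing a residual diagram automorphism of $\SG(\FM(\La(\tilde{D}_n)))$ realized by neither the loops nor $\DT$; verifying precisely which symmetries survive requires the explicit loop computations above. It is this step---translating each geometric loop into its exact mutation sequence and then proving group-theoretically that these sequences together with $\DT$ exhaust $\SG$---that I expect to be the bulk of the work, since it couples the Casals--Weng model to the known presentations of the relevant surface mapping class groups and of the finite-type cluster automorphism groups.
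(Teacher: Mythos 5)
Your high-level framing (loops give a homomorphism into $\SG(\FM(\La))$; compute a known presentation of the target; match generators case by case; the exclusion of $\La(\tilde{D}_n)$ comes from an unrealized order-two quiver automorphism) agrees with the paper, and your diagnosis of the $\tilde{D}_n$ obstruction is correct. But two of your structural inputs are wrong in a way that would sink the argument for exactly the types where it has content. First, you propose to read off $\SG(\FM(\La))$ from a marked-surface model "for the affine and extended affine types": this only exists for $\tilde{A}_n$ and $\tilde{D}_n$. The types $\tilde{E}_6,\tilde{E}_7,\tilde{E}_8$ and $E_7^{(1,1)},E_8^{(1,1)}$ are mutation-finite but \emph{not} surface type, and their cluster modular groups ($S_3\times\Z$, $\Z_2\times\Z$, $\Z$, $\pi_1(\mathrm{Conf}(\S^2,4))\rtimes\Z_2$, $\PSL_2(\Z)\rtimes\Z_6$) are not tagged mapping class groups of marked surfaces. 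The paper's substitute is the Kaufman--Greenberg presentation $\SG\cong\Gamma_\tau\rtimes\Aut(T_{\bf n})$ via $T_{\bf n}$-quivers, with $\Gamma_\tau=\langle\tau_1,\tau_2,\tau_3\rangle$ abelian subject to $\tau_i^{n_i}=\tau_j^{n_j}$; without this (or an equivalent presentation) your "first input" does not exist for the $E$-cases. Second, you have the role of $\DT$ inverted: you claim $\DT$ supplies the missing diagram automorphisms, but in the paper the diagram automorphisms in $\Aut(T_{\bf n})$ are precisely what $\DT$ does \emph{not} supply (hence the $\tilde{D}_n$ exclusion). What $\DT$ actually contributes is the third infinite-order generator via the identity $\DT=\tau_1\tau_2\tau_3\gamma^{-1}$, i.e. $\tau_3=\DT\,\tau_1^{-1}\tau_2^{-1}\gamma$ (for $\tilde{E}_7,\tilde{E}_8$), the tag-flipping involution (for $D_n$), or the entire finite cyclic group (for $E_6,E_7,E_8$).

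The other gap is in your choice of loops. The full cyclic rotation is $\DT^2$ and a collection of unspecified "local loops near the crossings" is not enough: for $E_7^{(1,1)}$ and $E_8^{(1,1)}$ the target is virtually a nonabelian free group, so no small abelian collection of rotations can have finite-index image, and one must exhibit loops realizing noncommuting generators. The paper's generators are the satellite $\vartheta$-loops (and, for torus links, the $\Sigma_i$-loops realizing Fraser's braid group action on $\Gr^\circ(k,n+k)$). The technical core, which your proposal leaves entirely open, is (i) the combinatorial computation of the mutation--permutation data of a $\vartheta$-loop from the plabic fence (Lemma \ref{lemma: satellite loop mutations}), and (ii) the verification, via tagged triangulations of the twice-punctured disk, that $\tilde\vartheta_1,\tilde\vartheta_2$ are conjugate to $\tau_1,\tau_2$ (Lemma \ref{lemma: loops conjugate}, which for $\vartheta_2$ even requires a weave computation outside the plabic-fence calculus). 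Until those identifications are made, "matching these sequences against the generators" is not a proof step but a restatement of the theorem.
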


For Legendrian links of affine and extended affine type, the cluster modular groups are infinite, allowing us to recover the fact that these Legendrian links admit infinitely many distinct exact Lagrangian fillings.

\begin{remark}
    As a consequence of \cite[Theorem 1.1]{CasalsGao23}, any cluster automorphism of $\FM(\La(\beta))$ can be given as the induced action of a sequence of Lagrangian disk surgeries applied to $\mathbb{L}$-compressing cycles of $H_1(L)$ for any filling $L$ of $\La$. However, realizing any given sequence of mutations as a Legendrian loop is a nontrivial question, as demonstrated in the case of $\La(\tilde{D}_n)$ in Appendix \ref{sec: appendix}. More precisely, there is no known algorithm for showing that for a Legendrian loop $\varphi$ whose induced action produces a sequence of mutations $\mu_n\circ \dots \circ\mu_1$ the Lagrangian filling $L'=\mu_n\circ \dots \circ\mu_1$ produced by the sequence of Lagrangian disk surgeries at the corresponding $\mathbb{L}$-compressing cycles is Hamiltonian isotopic to the Lagrangian filling $\varphi(L)$.   
\end{remark}

The methods we use to prove Theorem \ref{thm: gens} suggest ways to realize subgroups of cluster modular groups as Legendrian loop actions, even in cases where the full cluster modular group is not known. In the case of Legendrian torus links $\La(k, n)$, the sheaf moduli $\FM(\La(k, n))$ is isomorphic to the affine cone of the Grassmannian $\Gr^\circ(k, n+k)$.  
In \cite[Conjecture 8.2]{fraser2018braid}, Fraser describes a conjectural presentation for the cluster modular group of $\Gr^\circ(k, n+k)$ by studying braid group actions on this space, giving us a roadmap for realizing these group actions as these induced action of some Legendrian isotopy. 
See Subsection \ref{sec: Torus Links} for a more detailed description of this conjectural cluster modular group $\mathcal{G}'(k, n)$. By generalizing work of Casals and Gao in \cite{CasalsGao}, we show the following:

\begin{theorem}\label{thm: TorusLinkCMG}
    For any $k, n \geq 2$, Legendrian loop actions of $\La(k, n)$ together with the DT transformation generate a subgroup of $\SG(\FM(\La(k, n))$ isomorphic to $\mathcal{G}'(k,n+k)$. 
\end{theorem}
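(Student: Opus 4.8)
The plan is to transport the statement to the Grassmannian side using the stated isomorphism between $\FM(\La(k,n))$ and the affine cone over $\Gr^\circ(k,n+k)$, under which the Casals--Weng cluster structure \cite{CasalsWeng} is identified with Scott's cluster structure on the Grassmannian. Under this identification Fraser's group $\mathcal{G}'(k,n+k)$ is generated by an explicit list of cluster automorphisms \cite{fraser2018braid}: the cyclic rotation $\rho$ permuting the $n+k$ Pl\"ucker indices, Fraser's braid-type ``twist'' generators $\sigma_i$, and one further distinguished generator. The theorem then reduces to two assertions: first, that each of these generators is realized by the induced action of some Legendrian loop of $\La(k,n)$ or by the $\DT$ transformation; and second, that the resulting comparison map between $\mathcal{G}'(k,n+k)$ and the subgroup generated by loops and $\DT$ is an isomorphism.

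For the realization step I would argue generator by generator, generalizing the torus-link computations of Casals--Gao \cite{CasalsGao} that underlie their $\PSL_2(\Z)$ and $\Mod(\Sigma_{0,4})$ actions. The cyclic rotation $\rho$ is induced by the rotational Legendrian loop of $\La(k,n)$ that cyclically advances the braid word; tracking its effect on the cluster coordinates of $\FM(\La(k,n))$ shows it acts as the rotation of the associated grid quiver. Each braid generator $\sigma_i$ should be realized by a local Legendrian loop supported near two adjacent columns of the grid plabic graph, and I would match its induced map to Fraser's swap by computing the resulting sequence of quiver mutations on cluster coordinates. The remaining generator is supplied by the $\DT$ transformation: using the geometric description of $\DT$ in \cite[Section 5.4]{CasalsWeng}, one matches $\DT$ to Fraser's distinguished generator, which is exactly the element of $\mathcal{G}'(k,n+k)$ not realized by a loop and which completes the finite-index subgroup of loop actions to the full group.

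To upgrade the realization to an isomorphism I would first verify that the realized generators satisfy all of Fraser's defining relations; once each induced action is identified with Fraser's combinatorial generator, the relations are inherited from his presentation, yielding a surjection from $\mathcal{G}'(k,n+k)$ onto the subgroup. Conversely, every Legendrian loop of $\La(k,n)$ induces a cluster automorphism lying in the image of Fraser's action, so the subgroup generated by loops and $\DT$ coincides with that image. Faithfulness of Fraser's action on the set of seeds of the Grassmannian cluster algebra then gives $\langle \text{loops},\DT\rangle \cong \mathcal{G}'(k,n+k)$; this is the step where I would invoke the combinatorial separation of seeds established in \cite{fraser2018braid}.

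The main obstacle is the realization of the braid-type generators $\sigma_i$ as explicit Legendrian loops, together with the verification that their induced cluster actions are precisely Fraser's swaps, including the correct behavior on frozen directions. As emphasized in the Remark preceding the statement and illustrated by the $\La(\tilde D_n)$ case, there is no automatic dictionary from a Legendrian isotopy to a prescribed mutation sequence, so each $\sigma_i$ must be exhibited by hand. For general $(k,n)$ I expect this to require an induction that peels off a single strand at a time, reducing to the small base cases already understood by Casals--Gao and Fraser, while carefully propagating the identification of the induced loop actions with Fraser's generators through the inductive step.
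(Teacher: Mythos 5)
Your overall strategy coincides with the paper's: transport everything to $\Gr^\circ(k,n+k)$ via the flag description of $\FM(\La(k,n),T)$, realize Fraser's cyclic shift $\rho$ by the K\'alm\'an loop, realize his braid generators $\sigma_i$ by Legendrian loops, and let $\DT$ supply the remaining $\Z_2$ factor via $\DT^2=\rho^k$. The realization of $\rho$ and the role of $\DT$ are exactly as in the paper (Lemma \ref{lemma: Kalman loop} and the surrounding discussion).

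The genuine gap is in your construction of the $\sigma_i$. You propose ``a local Legendrian loop supported near two adjacent columns of the grid plabic graph,'' but Fraser's $\sigma_i$ is not local: writing $d=\gcd(k,n+k)$ and $m=k/d$, the automorphism $\sigma_i$ simultaneously modifies the vectors $v_{i+jd}, v_{i+jd+1}$ for every $j$, i.e.\ in every block of $d$ consecutive columns around the link. A compactly supported loop near two columns cannot induce this. The paper's loops $\Sigma_i$ (Lemma \ref{lemma: sigma loop}) are instead satellite-type loops about the $m$ strands fixed by the Coxeter projection of $(\sigma_1\cdots\sigma_{k-1})^{n+k}$, in which a single crossing orbits the entire link $m$ times through Reidemeister III moves and cyclic rotations; only when $d=k$ does this reduce to an ordinary $\vartheta$-loop. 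Relatedly, your plan to verify the induced action by tracking quiver mutation sequences and then inducting on strands runs directly into the ``no dictionary'' problem you yourself flag. The paper avoids this entirely: each Reidemeister III move induces a concrete isomorphism of the flag data, and the new one-dimensional space $W_j$ is pinned down by the singular support conditions as $V_{i+jd}^{(2)}\cap V_{i+jd+1}^{(k-1)}$, which is precisely Fraser's defining condition for $w_1$; no mutation bookkeeping or induction on $(k,n)$ is needed. Finally, be careful with your last step: the identification of the kernel of $G_d'\rtimes\Z_2\to\SG(\Gr^\circ(k,n+k))$ with $\langle\rho^{n+k}\rangle$ is Fraser's Conjecture 8.2 rather than an established ``combinatorial separation of seeds,'' so the isomorphism with $\mathcal{G}'(k,n+k)$ should be stated with the same care the paper takes in defining $\mathcal{G}'$ as the conjecturally presented group.
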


  If we view Legendrian loops as elements of $\pi_1(\Leg(K))$ where $\Leg(K)$ is the space of Legendrian embeddings of a knot or link $K$, then Theorems \ref{thm: gens} and \ref{thm: TorusLinkCMG} can be interpreted as a statement about actions of $\pi_1(\Leg(K))$ on $\FM(\La)$ when $\La$ is a max-tb Legendrian representative of $K$ and $\FM(\La)$ admits a cluster structure. In particular, following evidence given by Theorem 1.2.4 of \cite{fernández2022homotopy}, we conjecture that $\pi_1(\Leg(K))$ is isomorphic to a finite-index subgroup of the cluster modular group $\SG(\FM(\La(\beta)))$. Work of Fern\'andez and Min in this context also suggests a way to understand cluster modular groups of more complicated cluster varieties by realizing the corresponding Legendrian as a cable of a Legendrian $\La$ with a known cluster modular group $\SG(\FM(\La))$.

Following a remark in \cite[Section 5]{CasalsLagSkel}, we also consider Legendrian links with exact Lagrangian fillings that are invariant under finite group actions. Let $G$ be a finite group acting by exact symplectomorphisms or anti-exact symplectomorphisms 
on $(\R^4, \omega_{\st})$, thereby inducing an action by contactomorphisms on the boundary $(\R^3, \xi_{\st})$. See Subsection \ref{sub: G-actions} for explicit constructions of the group actions that we consider. We define an exact Lagrangian $G$-filling of $\La$ 
to be an exact Lagrangian filling $L$ of $\La$ such that $G(L)=L$ and $G(\La)=\La$. Similarly, we refer to a Legendrian loop $\varphi$ with trace $\Trace(\varphi)$ fixed by $G$ as a $G$-loop.

In general, one can obtain a new cluster algebra by folding an existing one along a $G$-action satisfying certain properties. Work of An, Bae, and Lee \cite[Theorem 1.4]{ABL22} establishes the existence of the conjectured number of $G$-fillings in this setting; see \cite[Conjecture 5.4]{CasalsLagSkel} for a precise formulation. We prove a $G$-equivariant version of Theorem \ref{thm: gens} for the following Legendrian links and $G$-actions:
\begin{align*}
 \mathcal{H}^G:=\{(\La(A_{2n-1}), \Z_2),(\La(D_4), \Z_3), (\La(\tilde{E}_6), \Z_3), (\La(\tilde{D}_{2n}), \Z_2), (\La(\tilde{D}_4), \Z_2),\\ (\La(D_n), \Z_2), (\La(E_6),\Z_2), (\La(\tilde{E}_6),\Z_2), (\La(\tilde{E}_7),\Z_2), (\La(\tilde{D}_4),\Z_2)\}
\end{align*}
where the pair $(\La, G)$ represents a Legendrian link and a $G$-action fixing $\La$.  
Note that while the positive braids we give above do not necessarily admit obvious $G$-actions, the Legendrian links we consider admit fronts that do exhibit the required symmetry. We compile the data of the choice of such fronts in Table \ref{tab: folded}.
Denote by $\FM(\La)^G$ the $G$-invariant subset of $\FM(\La)$, i.e. the moduli of $G$-invariant sheaves with singular support on $\La$.  

\begin{theorem}\label{thm: folded gens}
    For $(\La, G)\in \mathcal{H}^G$, the cluster modular group $\SG(\FM(\La)^G)$ is generated by $G$-invariant Legendrian loops of $\La$ and the DT transformation. 
\end{theorem}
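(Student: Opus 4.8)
The plan is to follow the template of the proof of Theorem~\ref{thm: gens}, now carried out for the folded cluster structure on $\FM(\La)^G$. The first step is to identify $\FM(\La)^G$ as a cluster variety of a known folded Dynkin type: under the $G$-actions recorded in Table~\ref{tab: folded}, folding sends $(\La(A_{2n-1}), \Z_2)$ to type $C_n$, $(\La(D_n), \Z_2)$ to type $B_{n-1}$, $(\La(D_4), \Z_3)$ to type $G_2$, $(\La(E_6), \Z_2)$ to type $F_4$, and the affine and extended affine inputs to their corresponding folded affine types. Invoking the folding results for these sheaf moduli (as in \cite{ABL22} together with the cluster structure of \cite{CasalsWeng}), the seeds of $\FM(\La)^G$ are exactly the $G$-orbits of seeds of $\FM(\La)$, and mutation in $\FM(\La)^G$ corresponds to simultaneous mutation along a $G$-orbit of vertices. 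Since each folded type is again of finite, affine, or extended affine type, its cluster modular group $\SG(\FM(\La)^G)$ is computed in the cluster-algebra literature, and I would fix an explicit finite generating set for each.

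The second step is to show that both $G$-invariant Legendrian loops and the $\DT$ transformation descend to cluster automorphisms of $\FM(\La)^G$. A $G$-invariant loop $\varphi$, one whose trace $\Trace(\varphi)$ is fixed by $G$, induces a cluster automorphism $\varphi_*$ of $\FM(\La)$ that commutes with the $G$-action, hence preserves the $G$-invariant locus and restricts to an automorphism of $\FM(\La)^G$. The $\DT$ transformation is canonical, so it is automatically $G$-equivariant for the (anti-)exact symplectomorphisms generating $G$, and likewise descends. This produces a homomorphism from the group generated by $G$-invariant loops and $\DT$ into $\SG(\FM(\La)^G)$, and, via step one, identifies the descended maps with the corresponding folded mutation sequences.

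The third step, and the heart of the argument, is surjectivity: each generator of $\SG(\FM(\La)^G)$ must be realized as the descent of a $G$-invariant loop or of $\DT$. By step one a generator is a $G$-equivariant mutation sequence of $\FM(\La)$, so it suffices to realize that sequence by a $G$-invariant loop. Here I would use the symmetric fronts of Table~\ref{tab: folded}: the loops constructed in the proof of Theorem~\ref{thm: gens}, namely rotations of the rainbow and $(-1)$-framed closures and the Casals--Gao-type loops, can be chosen to preserve these symmetric fronts, so that their traces are $G$-invariant and their induced actions descend to precisely the generators needed. Combining this with $\DT$ fills out the full group. A point requiring care is that Theorem~\ref{thm: gens} did not produce the full cluster modular group for $\La(\tilde D_n)$; for the folded pairs $(\La(\tilde D_{2n}), \Z_2)$ and $(\La(\tilde D_4), \Z_2)$ I would need to verify that the generator missing in the unfolded case either is absent from the folded cluster modular group or is itself realized by a $G$-invariant loop.

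The main obstacle I anticipate is exactly this final geometric realization: verifying that the generating loops can be taken genuinely $G$-invariant, rather than merely $G$-equivariant up to isotopy. As the Remark after Theorem~\ref{thm: gens} emphasizes, there is in general no algorithm certifying that a prescribed mutation sequence is induced by a given loop, and imposing equivariance only sharpens this difficulty, since one must track the $G$-action through the Lagrangian disk surgeries and check that the $\mathbb{L}$-compressing cycles come in $G$-orbits realized by $G$-invariant loops. A secondary technical point is confirming that the natural map $\SG(\FM(\La))^G \to \SG(\FM(\La)^G)$ is surjective onto the chosen generators, i.e.\ that folding and passage to the cluster modular group are compatible for these specific types; I would handle this case-by-case using the explicit seed combinatorics of the folded quivers rather than appealing to a general folding principle.
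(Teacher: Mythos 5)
Your proposal follows essentially the same route as the paper's proof: descend $G$-invariant loops and $\DT$ to $\FM(\La)^G$ via the observation preceding the proof, then argue case-by-case using the symmetric braid representatives of Table~\ref{tab: folded} and the known folded cluster modular groups to realize each generator, conjugating by Legendrian isotopies where the symmetric front differs from the original one. You also correctly anticipate the resolution of the $\tilde D_{2n}$ issue — the order-two quiver automorphism missing in the unfolded case does not appear among the generators of $\SG(\FM(\La(\tilde D_{2n}))^{\Z_2})\cong\Z_2\times\Z$, which is generated by the descents of $\tau_1$ and $\DT$ alone.
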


Implicit in the statement of Theorem \ref{thm: folded gens} is the fact that the ring of regular functions $\C[\FM(\La)^G]$ is a skew-symmetrizable cluster algebra for these choices of $\La$ and $G$ with a $\DT$ transformation obtained from the DT transformation on $\FM(\La)$. In such cases, imposing the $G$-invariance condition on sheaves in $\FM(\La)$ corresponds to the folding procedure used to obtain skew-symmetrizable cluster algebras from skew-symmetric ones, as is made precise in Theorem \ref{thm: folded cluster}. See Subsection \ref{sub: folding sheaves} for a contact-geometric characterization of this cluster structure using the ingredients of \cite[Theorem 1.1]{CasalsWeng}. We compile all of the information about the cluster modular groups of Theorem \ref{thm: folded gens} and specific $G$-actions in Table \ref{tab: folded} below. 

\subsubsection{Properties of Legendrian loops}
The second goal of this work is to use properties of cluster modular groups to provide new methods for understanding Legendrian loop actions. While the intimate connection between cluster modular groups and mapping class groups is an area of ongoing research, work of Ishibashi gives a partial Nielsen-Thurston classification for cluster automorphisms \cite{Ishibashi2018}. By interpreting Legendrian loops in this context, we obtain insight into certain fixed-point properties of their actions. In addition, a process known as cluster reduction gives us a method for linking many Legendrian loops actions to mapping classes of decorated triangulations of punctured surfaces. We apply these insights to give new tests for determining if a Legendrian loop $\varphi$ produces infinitely many distinct exact Lagrangian fillings.

Given a cluster $\mathcal{A}$-space $\mathcal{A}$, the cluster-theoretic analogue of Teichm\"uller space is its positive real part $\mathcal{A}_{>0}$, obtained by declaring all cluster variables to be positive real numbers and observing that the mutation formula is subtraction free. Analogous to the fixed point property of periodic mapping classes, the following statement is an immediate corollary of Ishibashi's work on cluster modular groups.

\begin{theorem}\label{thm: fixed points}
    Let $\La$ be a Legendrian link whose sheaf moduli $\FM(\La)$ admits a cluster structure. Any finite order Legendrian loop $\varphi$ of $\La$ has a fixed point in $\FM(\La)_{>0}$. 
\end{theorem}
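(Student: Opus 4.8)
The plan is to reduce the statement to Ishibashi's fixed-point theorem for cluster-periodic elements, so that the only thing requiring genuine verification is that a finite-order Legendrian loop induces a finite-order element of the cluster modular group. First I would use the identification, central to this paper, of a Legendrian loop $\varphi$ of $\La$ with a cluster automorphism $[\varphi] \in \SG(\FM(\La))$ of the sheaf moduli. Since concatenation of loops corresponds to composition of the induced automorphisms, this assignment is a homomorphism out of $\pi_1(\Leg(K))$, so a loop $\varphi$ of finite order $m$ yields $[\varphi]^m = \mathrm{id}$; hence $[\varphi]$ has finite order dividing $m$. If $[\varphi]$ happens to be trivial, then every point of $\FM(\La)_{>0}$ is fixed and there is nothing to prove, so we may assume $[\varphi]$ is a nontrivial element of finite order.

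Next I would invoke Ishibashi's Nielsen--Thurston classification of cluster automorphisms. A finite-order element is, by definition, \emph{cluster-periodic}, and the positive real part $\FM(\La)_{>0}$ --- the $\mathcal{A}_{>0}$ of this cluster $\mathcal{A}$-variety, well-defined precisely because the mutation maps are subtraction-free as noted above --- is the cluster-theoretic analogue of Teichm\"uller space carrying a $\SG(\FM(\La))$-action. Ishibashi's theorem provides a fixed point in $\mathcal{A}_{>0}$ for every cluster-periodic element, exactly mirroring the fact that a periodic mapping class fixes a point of Teichm\"uller space (Nielsen realization for cyclic groups, obtained classically via convexity/center-of-mass arguments on a contractible space). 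Applying this to $[\varphi]$ returns a point $p \in \FM(\La)_{>0}$ with $[\varphi](p)=p$, which is the assertion.

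The entire conceptual weight rests on Ishibashi's fixed-point result, so the place where care is genuinely needed --- and the step I would flag as the main obstacle --- is the transfer of the hypothesis carried out in the first paragraph: one must pin down in what sense $\varphi$ has ``finite order'' (as a class in $\pi_1(\Leg(K))$, equivalently as a Legendrian loop up to the appropriate isotopy) and confirm that the action of Legendrian loops on $\FM(\La)$ is functorial enough that this finiteness is inherited by $[\varphi]$. The possible non-injectivity of $\varphi \mapsto [\varphi]$ is harmless, since a loop acting trivially has every point of $\FM(\La)_{>0}$ as a fixed point. With this translation in hand the result is immediate, requiring no mutation-sequence bookkeeping, $g$-vector computation, or sign-stability analysis; this is what justifies presenting it as a corollary of Ishibashi's work.
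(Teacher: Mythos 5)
Your proposal is correct and follows essentially the same route as the paper: the paper also treats the statement as an immediate corollary of Ishibashi's fixed-point theorem for periodic cluster automorphisms, with the only content being the identification of a Legendrian loop with an element of $\SG(\FM(\La))$ acting on $\FM(\La)_{>0}$. The one point where you diverge slightly is interpretive rather than mathematical: the paper defines a loop to be ``periodic'' precisely when its \emph{induced} cluster automorphism has finite order (see the restatement in Section 6), which dissolves the hypothesis-transfer issue you rightly flag as the delicate step under your reading of ``finite order.''
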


As an immediate corollary, we obtain the fact that any Legendrian loop action without fixed points in $\FM(\La)_{>0}$ necessarily yields infinitely many distinct exact Lagrangian fillings. See Example \ref{ex: T36FixedPoints} for a computation that recovers the fact that one of the Legendrian loops of $\La(3,6)$ considered by Casals and Gao has infinite order.

In addition to the above fixed point properties, we produce a combinatorial method for determining when a Legendrian loop has infinite order by studying the Legendrian loop action on the intersection quiver of a given exact Lagrangian filling. We apply this method to study Legendrian $\vartheta$-loops: loops of Legendrian links $\La=\La(\beta\Delta^2, i, \gamma)$ obtained by removing the $i$th strand of the $(-1)$-framed closure of $\beta\Delta^2$ and replacing it with the $(-1)$-framed closure of a positive braid $\gamma$; see \cite[Definition 2.11]{CasalsNg} or Definition \ref{def: theta loop} below for a more precise description. While all of the Legendrian links we apply this test to are already known to admit infinitely many fillings, we also show that the induced action of $\vartheta$-loops on $\FM(\La)_{>0}$ is properly discontinuous. This allows us to obtain a partial converse to Theorem \ref{thm: fixed points}.

\begin{theorem}\label{thm: fixed pt converse}
Let $\La=\La(\beta\Delta^2, i, \sigma_1^k)$ with $k\geq 3$ and assume that the induced action $\tilde{\vartheta}$ of the corresponding $\vartheta$ loop has infinite order. Then $\tilde{\vartheta}$ has no fixed points in $\FM(\La)_{>0}$. \end{theorem}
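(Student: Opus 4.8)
The plan is to factor the statement through the \emph{proper discontinuity} of the cyclic action generated by $\tilde\vartheta$ on $\FM(\La)_{>0}$, and then reduce the no-fixed-point conclusion to a short formal argument. Granting that $\langle\tilde\vartheta\rangle$ acts properly discontinuously, suppose toward a contradiction that $\tilde\vartheta(p)=p$ for some $p\in\FM(\La)_{>0}$. Then $\tilde\vartheta^n(p)=p$ for every $n\in\Z$, so for the compact set $K=\{p\}$ we have $\tilde\vartheta^n(K)\cap K=\{p\}\neq\es$ for all $n$. As $\tilde\vartheta$ has infinite order, the set $\{n\in\Z:\tilde\vartheta^n(K)\cap K\neq\es\}$ equals $\Z$ and is therefore infinite, contradicting proper discontinuity. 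It is exactly here that the infinite-order hypothesis is used: a finite-order loop generates a finite group, for which proper discontinuity holds vacuously yet a fixed point exists by Theorem~\ref{thm: fixed points}. The entire content of the theorem therefore lies in establishing proper discontinuity of $\langle\tilde\vartheta\rangle$.

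To prove proper discontinuity I would use cluster reduction. First I would write down the intersection quiver of a distinguished exact Lagrangian filling of $\La=\La(\beta\Delta^2,i,\sigma_1^k)$ and isolate the $A_{k-1}$ chain produced by the inserted braid $\sigma_1^k$, together with the explicit mutation sequence (and accompanying permutation) representing $\tilde\vartheta$ on $\FM(\La)$. The hypothesis $k\geq 3$ is what pushes this chain out of the finite-type regime ($A_1$, $A_2$) in which the $\vartheta$-loop is periodic and fixed points do occur; for $k\geq 3$ I expect the reduction along the frozen and peripheral directions to land in a cluster algebra of surface type $\A(S)$ for a punctured surface $S$, with $\tilde\vartheta$ descending to a mapping class $f\in\MCG(S)$. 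Proper discontinuity then follows from the classical fact that $\MCG(S)$ acts properly discontinuously on decorated Teichm\"uller space, under the Penner--Fock--Goncharov identification of the latter with $\A(S)_{>0}$, pulled back along the reduction map to $\FM(\La)_{>0}$.

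As a more self-contained alternative, and to handle the case in which $\tilde\vartheta$ is reducible rather than purely pseudo-Anosov, I would instead build an explicit proper $\tilde\vartheta$-equivariant complexity function $\Phi\colon\FM(\La)_{>0}\to\R_{\geq 0}$ out of the cluster variables along the $A_{k-1}$ chain, arranged so that its sublevel sets are compact and so that $\tilde\vartheta$ increases $\Phi$ by a definite amount off a compact core; properness of $\Phi$ then bounds $\{n:\tilde\vartheta^n(K)\cap K\neq\es\}$ for each compact $K$. The main obstacle is precisely this proper-discontinuity step: the full cluster modular group need not act properly discontinuously on $\FM(\La)_{>0}$ in general, so one must exploit the specific structure of the $\sigma_1^k$ insertion, and in the reducible case confirm, using Ishibashi's Nielsen--Thurston classification, that no fixed point is contributed by the periodic directions.
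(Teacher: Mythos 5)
Your proposal follows essentially the same route as the paper: the deferred step of showing that the cluster reduction of $\tilde{\vartheta}$ lands in a surface-type cluster algebra with $\tilde{\vartheta}$ acting as a tagged mapping class is exactly Lemma \ref{lemma: reduction to surface}, and the proper-discontinuity input you invoke for $\MCG(S)$ on $\A(S)_{>0}$ is precisely the content of Ishibashi's Theorem 3.8 cited in the paper's proof. Your opening formal argument (a fixed point of an infinite-order element contradicts proper discontinuity of the cyclic action on the singleton $\{p\}$) is a correct expansion of the paper's concluding ``in particular.''
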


The class of Legendrian links that admit $\vartheta$-loops is quite broad, containing positive torus links $\La(k, kn)$, as well as all of the other Legendrian links in the set $\mathcal{H}$ defined above. In fact, Theorem \ref{thm: gens} can be restated using only $\vartheta$-loops as the generators of a finite index subgroup of the cluster modular group.

The key technical result that allows us to conclude Theorem \ref{thm: fixed pt converse} is the cluster reduction of the induced action of the $\vartheta$-loops in question to (tagged) mapping classes of surface-type cluster algebras. As a result, the combinatorial criterion we obtain for understanding the order of a loop can be stated as follows:

\begin{theorem}\label{thm: combinatorial criterion}
    The induced action of a Legendrian loop $\vartheta$ of $\La(\beta\Delta^2, i, \sigma_1^k)$ for $k\geq 3$ is infinite order if and only if the cluster-reduced surface type quiver is that of an infinite type cluster algebra. 
\end{theorem}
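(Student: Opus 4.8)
The plan is to prove the equivalence by passing to the cluster reduction of the induced action and then invoking Ishibashi's Nielsen--Thurston classification of cluster automorphisms. First I would apply the cluster reduction for these $\vartheta$-loops: since $\vartheta$ is built from $\sigma_1^k$, its induced action $\tilde\vartheta$ fixes a distinguished frozen subquiver and acts only on a complementary surface-type subquiver $Q$. Cluster reduction identifies the subgroup generated by $\tilde\vartheta$ with a subgroup of the tagged mapping class group of the marked surface $S(Q)$, sending $\tilde\vartheta$ to an explicit tagged mapping class $\bar\vartheta$. The first point to check is that this identification is faithful, so that $\tilde\vartheta^m=\mathrm{id}$ if and only if $\bar\vartheta^m=\mathrm{id}$; the order of the Legendrian loop is thereby reduced to the order of $\bar\vartheta$ in $\MCG(S(Q))$.

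For the direction that a finite-type reduced quiver forces finite order, I would invoke the classification of finite-type surface cluster algebras: $Q$ is of finite type exactly when $S(Q)$ is an unpunctured or once-punctured polygon, equivalently a Dynkin quiver of type $A$ or $D$. For such surfaces the tagged mapping class group is finite---a polygon contributes only cyclic boundary rotations, and a single puncture at most a finite further factor---so every element, in particular $\bar\vartheta$, is periodic. By faithfulness $\tilde\vartheta$ then has finite order, which gives the implication ``infinite order $\Rightarrow$ infinite type'' by contraposition.

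The substance of the argument is the converse. When $Q$ is of infinite type I must show that the \emph{specific} class $\bar\vartheta$ is non-periodic, not merely that non-periodic elements exist in $\MCG(S(Q))$. Here I would make the reduction explicit and identify $\bar\vartheta$ with the mapping class carrying the twisting data of $\sigma_1^k$: on a polygon this is a finite-order rotation, but the passage to infinite type produces an essential, non-boundary-parallel simple closed curve $\gamma\subset S(Q)$ along which $\bar\vartheta$ genuinely twists. The hypothesis $k\geq 3$ is what guarantees that this twisting is nontrivial and that $\gamma$ is essential. A genuine twist along an essential curve forces $\bar\vartheta$ to have infinite orbit on isotopy classes of curves and hence no fixed point in Teichm\"uller space, so $\bar\vartheta$ is not periodic; by Ishibashi's classification it is infinite-order reducible or pseudo-Anosov, whence $\tilde\vartheta$ has infinite order. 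The main obstacle is precisely this geometric identification---translating the braid-theoretic origin of $\vartheta$ into the explicit essential curve $\gamma$ on $S(Q)$ and verifying its essentialness---since it is this input that separates the infinite-type case from the finite-type rotations. Combining the two implications yields the equivalence.
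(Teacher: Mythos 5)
Your proposal follows essentially the same route as the paper: cluster-reduce the induced action to a tagged mapping class on a surface-type quiver, note that finite type corresponds to a disk with at most one puncture (whose tagged mapping class group is finite), and argue that in the infinite-type case the class is a genuine twist along an essential curve and hence of infinite order. The geometric identification you flag as the main obstacle is exactly what the paper supplies in Lemma \ref{lemma: reduction to surface}, where the explicit mutation sequence from Lemma \ref{lemma: satellite loop mutations} is matched with a composition of partial Dehn twists about the boundary components created by the oriented cycles of the non-fixed quiver, so that Ishibashi's result that cluster Dehn twists have infinite order closes the argument.
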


We also give a recipe for obtaining the cluster reduction of $\vartheta$ directly from the front diagram, making the criterion of Theorem $\ref{thm: combinatorial criterion}$ relatively straightforward to apply.

{\bf Organization:} Section \ref{sec: background} below reviews the necessary background material related to constructions of exact Lagrangian fillings of Legendrian links, while Section \ref{sec: sheaves} covers their sheaf-theoretic invariants. Section \ref{sec: Clusters} explains how to build a cluster structure from these elements, following \cite{CasalsWeng}. In Sections \ref{sec: CMGs} and \ref{Sec: Folding} we prove Theorems \ref{thm: gens}, \ref{thm: TorusLinkCMG}, and \ref{thm: folded gens} and compile information about the cluster modular groups associated to the specific Legendrian links in Tables \ref{tab: generating}, \ref{tab: CMG Gr}, and \ref{tab: folded}. Finally, Section \ref{sec: NielsenThurston} contains proofs of Theorems  \ref{thm: fixed points} \ref{thm: fixed pt converse}, and \ref{thm: combinatorial criterion} as well as various example computations.

{\bf Acknowledgements: } Many thanks to Roger Casals for his support and encouragement throughout this project. Thanks also to Daping Weng for patiently answering all of my questions about cluster theory, as well as to Lenny Ng, Eduardo Fern\'andez, and Orsola Capovilla-Searle for helpful conversations about Legendrian loops. Finally, thanks to Dani Kaufman for useful discussions about cluster modular groups and to Eugene Gorsky for comments on a very early draft of this work. This work was partially supported by NSF grant DMS-1942363.

\section{Contact-geometric background}\label{sec: background}

We begin with the necessary background on Legendrian links and their exact Lagrangian fillings.
 The standard contact structure $\xi_{st}$ in $\R^3$ is the 2-plane field given as the kernel of the 1-form $\alpha_{\st}=dz-ydx$. A link $\La \subseteq (\R^3, \xi_{st})$ is Legendrian if $\La$ is always tangent to $\xi_{st}$. As $\La$ can be assumed to avoid a point, we can equivalently consider Legendrians $\La$ contained in the contact 3-sphere $(\mathbb{S}^3, \xi_{st})$ \cite[Section 3.2]{Geiges08}. We consider Legendrian links up to Legendrian isotopy, i.e. ambient isotopy through a family of Legendrians. 

The symplectization $\Symp(M, \ker(\alpha))$ of a contact manifold $(M, \ker(\alpha))$ is the symplectic manifold $(\R_t\times M, d(e^t\alpha))$. Given two Legendrian links $\La_-, \La_+\subseteq (\R^3,\xi_{\st})$, 
	
 an exact Lagrangian cobordism $L\subseteq \Symp(\R^3, \ker(\alpha_{\st}))$ from $\La_-$ to $\La_+$ is a cobordism $\Sigma$ such that there exists some $T>0$ satisfying the following: 

	\begin{enumerate}
		\item $d(e^t\alpha_{\st})|_\Sigma=0$ 
		\item $\Sigma\cap ((-\infty, T]\times \R^3)=(-\infty, T]\times \La_-$ 
		\item $\Sigma\cap ([T, \infty)\times \R^3)=[T, \infty) \times \La_+$ 
		\item $e^t\alpha_{\st}|_\Sigma=df$ for some function $f: \Sigma\to \R$ that is constant on $(-\infty, T]\times \La_-$ and $[T, \infty)\times \La_+$. 
	\end{enumerate}

	An exact Lagrangian filling of the Legendrian link $\La\subseteq (\R^3, \xi_{\st})$ is an exact Lagrangian cobordism $L$ from $\emptyset$ to $\La$ that is embedded in $\Symp(\R^3, \ker(\alpha_{\st}))$. Equivalently, we consider $L$ to be embedded in the symplectic 4-ball with boundary $\partial L$ contained in contact $(\S^3, \xi_{\st})$.

We will depict a Legendrian link $\La \subseteq (\R^3, \xi_{st})$ in either of two projections; the front projection $\Pi:(\R^3, \xi_{\st}) \to \R^2$ given by $\Pi(x, y, z)=(x, z)$ or the Lagrangian projection $\pi: (\R^3, \xi_{\st})\to \R^2$ given by $\pi(x, y, z)=(x,y)$.

\subsection{Legendrian loops}\label{sub: loops defs}
Let $\La$ be the $(-1)$-framed closure of a positive braid. A Legendrian loop is, by definition, a Legendrian isotopy of $\La$ that fixes $\La$ setwise at time one. By construction, the trace of any Legendrian loop $\varphi$ produces an exact Lagrangian concordance $\Trace(\varphi):=\{\{t\}\times \la_t| 0\leq t \leq 1\}$ from $\La$ to itself in the symplectization $\Symp(\R^3, \alpha_{\st})$. One then obtains an action on the exact Lagrangian fillings of $\La$ via concatenation; given an exact Lagrangian filling $L$ of $\La$, the filling $\varphi(L)$ is defined to be the filling $L\cup_\La \Trace(\varphi)$. 

The Legendrian loops we consider here can be decomposed into elementary Legendrian isotopies. In the front projection, these elementary isotopies correspond to Reidemeister III moves, planar isotopies, and (counter)clockwise rotations of a crossing. In terms of the braid word $\beta$, these are given by the braid move $\sigma_i\sigma_{i+1}\sigma_i=\sigma_{i+1}\sigma_i\sigma_{i+1}$, commutation $\sigma_i\sigma_j=\sigma_j\sigma_i$ for $|i-j|\geq 2$, and cyclic permutation $\sigma_iw=w\sigma_i$. We refer to the latter Legendrian isotopy as a cyclic rotation and denote it by $\delta$, as in \cite{CasalsGao}.

Note that since $\Delta_n^2$ is a central in $\Br_n$, any Legendrian link given as the $(-1)$-framed closure of $\beta\Delta_n^2$ admits a Legendrian loop $\delta^{|\beta|}$ described in the $(-1)$-closure as a sequence of cyclic rotations, one for each crossing in $\beta$. We refer to such a Legendrian loop as a full cyclic rotation. The full cyclic rotation features heavily in \cite{GSW}, where the authors used it to prove the existence of infinitely many exact Lagrangian fillings of many Legendrians of the form $\La(\beta)$.

\subsubsection{$\vartheta$-loops} We consider a particular class of Legendrian loops known as $\vartheta$-loops, first appearing in the specific setting of Legendrian torus links considered by Casals and Gao in \cite{CasalsGao}. We present the more general definition given in \cite[Section 2.4]{CasalsNg}. Any braid $\beta$ yields a permutation $w(\beta)\in S_n$ via its Coxeter projection $w:\Br_n\to S_n$ encoding where strands of the braid start and end. In the case that $w(\beta)$ admits a fixed point $i\in \{1, \dots, n\}$, then the $(-1)$-framed closure of $\beta$, which we will denote by $\La$, contains a standard Legendrian unknot $\La_i$ corresponding to the $i$th strand of $\beta$. Since $\La_i$ is a Legendrian (un)knot, there is an open neighborhood $\mbox{Op}(\La_i)$, disjoint from $\La\backslash \La_i$, that is contactomorphic to $(J^1(\La_i), \xi_{\st})$. Let $\gamma \in \Br_m^+$ be a positive braid in $J^1\S^1$ and denote by $\La(\gamma)_i$ the Legendrian link obtained by satelliting $\gamma$ about $\La_i$. Further denote by $\La(\beta, i, \gamma)$ the Legendrian link $\La\backslash\La_i\cup \La(\gamma)_i$, which we will refer to as the Legendrian link obtained by satelliting $\gamma$ around the Legendrian unknot $\La_i$. 

By construction, in the open neighborhood $\mbox{Op}(\La_i)$ there is a contact flow along the radial coordinate of the companion unknot that yields a compactly supported contact isotopy $\Theta_t$ of $\La_i$. Denote by $\tilde{\Theta}_t$ the extension of $\Theta_t$ by the identity to all of $\R^3$.

\begin{definition}\label{def: theta loop}
    Let $\La=\La(\beta, i, \gamma)$ be a Legendrian satellite link with $\gamma\in \Br_m^+$. A $\vartheta$-loop of $\La$ is a compactly supported contact isotopy $\tilde{\Theta}_t$ fixing $\La(\gamma)_i$ setwise by time one.
\end{definition}

 See Figure \ref{fig:LoopEx} for an example of a $\vartheta$-loop depicted in $J^1\S^1$. Unless otherwise specified, we will impose the additional condition that $\gamma\in \Br_2^+$ for the remainder of this article. 

\subsubsection{K\'alm\'an loops}
We consider a class of loops of Legendrian torus links originally studied in \cite{Kalman}. K\'alm\'an showed that the loops are nontrivial elements of $\pi_1(\Leg(K(n,k)))$ by studying their induced action on the augmentation variety of $\La(k, n)$. These loops are readily understood by observing that for $\La(k, n)$, the full cyclic rotation $\delta^{(k-1)n}$ factors as $(\delta^{k-1})^n$. More precisely, in the front given as the $(-1)$-framed closure of $\beta=(\sigma_1\dots \sigma_{k-1})^{n+k}$, the K\'alm\'an loop $\rho=\delta^{k-1}$ is defined to be the Legendrian loop given by cyclic rotation of the first $k-1$ crossings. K\'alm\'an's main result \cite[Theorem 1.3]{Kalman} is that the induced action of this loop on the augmentation variety of $\La(k, n)$ has order $n+k$.

\subsubsection{Donaldson-Thomas transformations} Finally, we define the Donaldson-Thomas transformation, denoted by $\DT$. While the DT transformation is not known to be induced by a Legendrian loop, its square $\DT^2$ is Hamiltonian isotopic to the full cyclic rotation in the case of rainbow closures $\La(\beta)$ \cite[Lemma 4.2]{GSW}. For a more general class of Legendrians, Casals and Weng describe a Legendrian isotopy and a strict contactomorphism that together induce the $\DT$ transformation \cite[Section 5]{CasalsWeng}. For rainbow closures $\La(\beta)$, this procedure can be described as first performing half of a full cyclic rotation $(\delta^{|\beta|})^{1/2}$ by consecutively rotating all of the crossings past the left cusps in a counterclockwise direction. This isotopy is then composed with the strict contactomorphism $x\mapsto -x, z\mapsto -z$ to return to the original front. See Subsection \ref{sub: LoopsFromFences} for an alternative definition in terms of the combinatorics of plabic fences.

\subsection{Legendrian weaves}\label{sec:weaves}

Let us now describe Legendrian weaves, a geometric construction of Casals and Zaslow that can be used to produce exact Lagrangian fillings of a Legendrian link \cite{CasalsZaslow}. The key idea of their construction is to combinatorially encode a \emph{Legendrian} surface $\w$ in the 1-jet space $J^1\D^2=T^*\D^2\times \R_z$ by the singularities of its front projection in $\D^2\times \R_z$. The Lagrangian projection of $\w$ then yields an exact Lagrangian surface in $T^*\D^2$.

The contact geometric setup of the Legendrian weave construction is as follows. We construct a filling of $\La$ by first describing a local model for a Legendrian surface $\w$ in $J^1\D^2=T^*\D^2\times \R_z$. We equip $T^*\D^2$ with the symplectic form $d(e^r\alpha)$ where $\ker(\alpha)=\ker(dy_1-y_2d\theta)$ is the standard contact structure on $J^1(\partial \D^2)$ and $r$ is the radial coordinate. This choice of symplectic form ensures that the flow of $e^r\alpha$ is transverse to $J^1\S^1\cong \R^2\times \partial \D^2$ thought of as the cotangent fibers along the boundary of the 0-section. The Lagrangian projection of $\w$ is then a Lagrangian surface in $(T^*\D^2, d(e^r\alpha))$. Moreover, since $\w\subseteq (J^1\D^2, \ker (dz-e^r\alpha))$ is a Legendrian, we immediately obtain the function $z:\pi(\w)\to \R$ satisfying $dz=e^r\alpha|_{\pi(\w)}$, demonstrating that $\pi(\w)$ is exact. 

The boundary of $\pi(\w)$ is taken to be a positive braid $\beta$ in $J^1\S^1$  so that we may regard it as a Legendrian link in a contact neighborhood of $\partial \D^2$. As the 0-section of $J^1\S^1$ is Legendrian isotopic to a max-tb standard Legendrian unknot, we can take $\partial\pi(\w)$ to equivalently be the standard satellite of the standard Legendrian unknot. Diagramatically, this implies that the braid $\beta$ in $J^1\S^1$ can be given as the $(-1)$-framed closure of $\beta$ in contact $\S^3$.

\subsubsection{$N$-Graphs and Singularities of Fronts} To construct a Legendrian weave surface $\w$ in $J^1\D^2,$ we combinatorially encode the singularities of its front projection in a colored graph. 
Local models for these singularities of fronts are classified by work of Arnold \cite[Section 3.2]{ArnoldSing}. The three singularities that appear in our construction describe elementary Legendrian cobordisms and are pictured in Figure \ref{fig: wavefronts}.

	\begin{center}
		\begin{figure}[h!]{ \includegraphics[width=.8\textwidth]{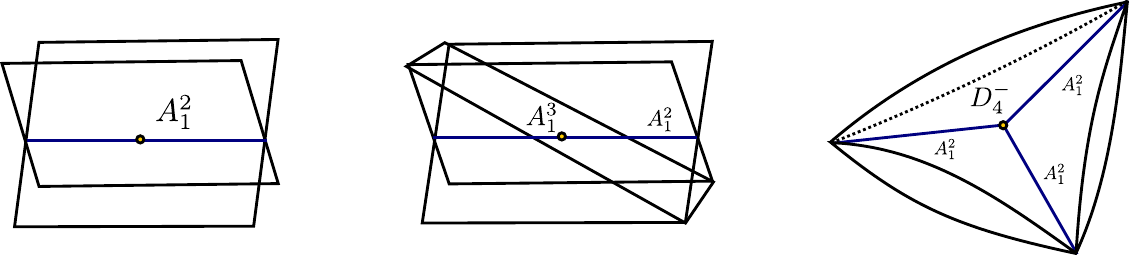}}\caption{Singularities of front projections of Legendrian surfaces. Labels correspond to notation used by Arnold in his classification.}
			\label{fig: wavefronts}\end{figure}
	\end{center}

Since the boundary of our singular surface $\Pi(\w)$ is the front projection of an $N$-stranded positive braid, $\Pi(\w)$ can be pictured as a collection of $N$ sheets away from its singularities. We describe the behavior at the singularities as follows:

\begin{enumerate}
    \item The $A_1^2$ singularity occurs when two sheets in the front projection intersect. This singularity can be thought of as the trace of a constant Legendrian isotopy in the neighborhood of a crossing in the front projection of the braid $\beta\Delta^2$. 
    \item The $A_1^3$ singularity occurs when a third sheet passes through an $A_1^2$ singularity. This singularity can be thought of as the trace of a Reidemeister III move in the front projection.
    \item A $D_4^-$ singularity occurs when three $A_1^2$ singularities meet at a single point. This singularity can be thought of as the trace of a 1-handle attachment in the front projection. 
\end{enumerate}

Having identified the singularities of fronts of a Legendrian weave surface, we encode them by a colored graph $\Gamma\subseteq \D^2$. The edges of the graph are labeled by Artin generators of the braid and we require that any edges labeled $\sigma_i$ and $\sigma_{i+1}$ meet  at a hexavalent vertex with alternating labels while any edges labeled $\sigma_i$ meet at a trivalent vertex. To obtain a Legendrian weave $\w(\Gamma)\subseteq (J^1\D^2,\xi_{\st})$ from an $N$-graph $\Gamma$, we glue together the local germs of singularities according to the edges of $\Gamma$. First, consider $N$ horizontal sheets $\D^2\times \{1\}\sqcup \D^2\times \{2\}\sqcup \dots \sqcup \D^2\times \{N\}\subseteq \D^2\times \R$ and an $N$-graph $\Gamma\subseteq \D^2\times \{0\}$. We construct the associated Legendrian weave $\w(\Gamma)$ as follows:

	\begin{itemize}
		\item Above each edge labeled $\sigma_i$, insert an $A_1^2$ crossing between the $\D^2\times\{i\}$ and $\D^2\times \{i+1\}$ sheets so that the projection of the $A_1^2$ singular locus under $\pi:\D^2\times \R \to \D^2\times \{0\}$ agrees with the edge labeled $\sigma_i$.   
		\item At each trivalent vertex $v$ involving three edges labeled by $\sigma_i$, insert a $D_4^-$ singularity between the sheets $\D^2\times \{i\}$ and $\D^2\times\{i+1\}$ in such a way that the projection of the $D_4^-$ singular locus agrees with $v$ and the projection of the $A_2^1$ crossings agree with the edges incident to $v$.
		\item At each hexavalent vertex $v$ involving edges labeled by $\sigma_i$ and $\sigma_{i+1}$, insert an $A_1^3$ singularity along the three sheets in such a way that the origin of the $A_1^3$ singular locus agrees with $v$ and the $A_1^2$ crossings agree with the edges incident to $v$.
	\end{itemize} 
	\begin{center}
		\begin{figure}[h!]{ \includegraphics[width=\textwidth]{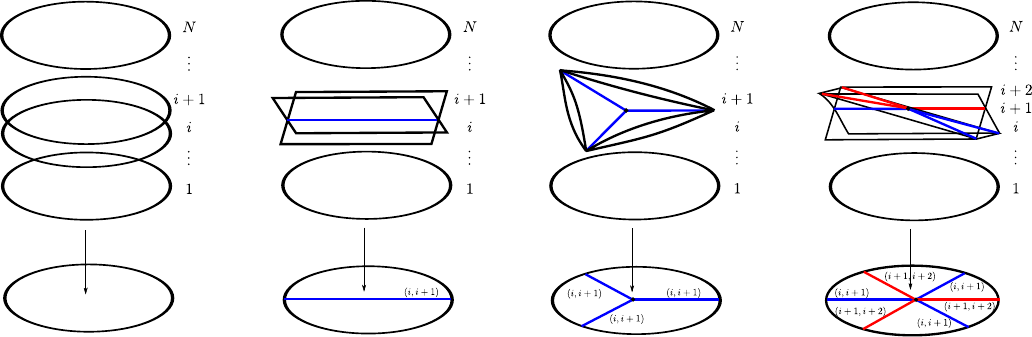}}\caption{The weaving of singularities of fronts along the edges of the $N$-graph. Gluing these local models according to the $N$-graph $\Gamma$ yields the weave $\w(\Gamma)$.}
			\label{fig:Weaving}\end{figure}
	\end{center}
	
	If we take an open cover $\{U_i\}_{i=1}^m$ of $\D^2\times \{0\}$ by open disks, refined so that any disk contains at most one of these three features, we can glue together the resulting fronts according to the intersection of edges along the boundary of our disks. Specifically, if $U_i\cap U_j$ is nonempty, then we define $\Pi(\w(U_1\cup U_2))$ to be the front resulting from considering the union of fronts $\Pi(\w(U_1))\cup\Pi(\w(U_j))$ in $(U_1\cup U_2)\times \R$.

	\begin{definition}\label{def: weave}
	    The Legendrian weave $\w(\Gamma)\subseteq (J^1\D^2, \xi_{st})$ is the Legendrian lift of the front $\Pi(\w(\cup_{i=1}^m U_i))$ given by gluing the local fronts of singularities together according to the $N$-graph $\Gamma$.
	\end{definition}

	The immersion points of a  Lagrangian projection of a weave surface $\w$ correspond precisely to the Reeb chords of $\w$. In particular, if $\w$ has no Reeb chords, then its Lagrangian projection $L(\w)$ is an embedded exact Lagrangian filling of $\partial(\w)$. In the Legendrian weave construction, Reeb chords correspond to critical points of functions giving the difference of heights between sheets. Every weave surface in this work admits an embedding where the distance between the sheets in the front projection grows monotonically in the direction of the boundary, ensuring that there are no Reeb chords.


	\subsubsection{Homology of Weaves}

In this subsection, we describe the homology of a Legendrian weave $\w(\Gamma)$. The smooth topology of $\w(\Gamma)$ is that of an $N$-fold branched cover over $\D^2$ with simple branched points corresponding to each of the trivalent vertices of $\Gamma$. 
	Assuming that $\w(\Gamma)$ is connected, the genus of $\w(\Gamma)$ is then computed using the Riemann-Hurwitz formula: 
	$$
	g(\w(\Gamma))=\frac{1}{2}(v(\Gamma)+2-N\chi(\D^2)-|\partial\w(\Gamma)|)$$
	where $v(\Gamma)$ is the number of trivalent vertices of $\Gamma$ and $|\partial\w(\Gamma)|$ denotes the number of boundary components of $\Gamma$.


We now describe a recipe for combinatorially identifying particular elements of $H_1(\w(\Gamma));\Z)$ known as $\mathbb{L}$-compressing cycles. These are cycles such that their Lagrangian projection in $L(\w(\Gamma))$ bounds an embedded Lagrangian disk in the complement of $L(\w(\Gamma))$. As described in Section \ref{sec: Clusters}, $\mathbb{L}$-compressing cycles play a crucial role in defining the cluster seed associated to $L(\w)$. We first consider an edge connecting two trivalent vertices. Closely examining the sheets of our surface, we can see that each such edge corresponds to a 1-cycle. We refer to such a 1-cycle as a short {\sf I}-cycle. Similarly, any three edges of the same color that connect a single hexavalent vertex to three trivalent vertices correspond to another 1-cycle, which we refer to as a short {\sf Y}-cycle. See Figure \ref{fig:ShortCycles}  for examples of a short {\sf I}-cycle (left) and a short {\sf Y}-cycle (right). We can also consider a sequence of edges starting and ending at trivalent vertices and passing directly through any number of hexavalent vertices, as pictured in Figure \ref{fig:longI-cycle}. Such a cycle is referred to as a long {\sf I}-cycle. Finally, we can combine any number of {\sf I}-cycles and short {\sf Y}-cycles to describe a wide class of 1-cycles as trees with leaves on trivalent vertices and edges passing directly through or branching at hexavalent vertices.

 	\begin{center}
		\begin{figure}[h!]{ \includegraphics[width=.45\textwidth]{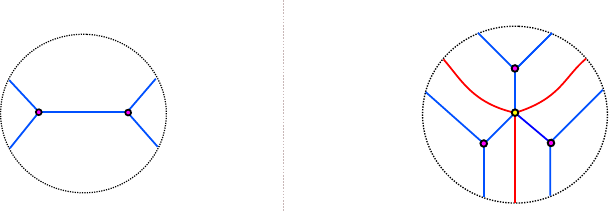}}\caption{Two local models of $\mathbb{L}$-compressing cycles in $H_1(\mathfrak{w})$.
			}
			\label{fig:ShortCycles}\end{figure}
	\end{center}

	\begin{center}
		\begin{figure}[h!]{ \includegraphics[width=.95\textwidth]{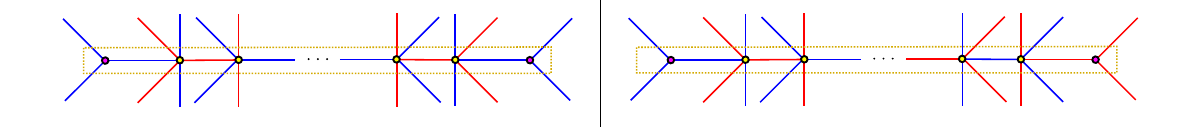}}\caption{A pair of long {\sf I}-cycles. The cycle on the left passes through an even number of hexavalent vertices, while the cycle on the right passes through an odd number. 
			}
			\label{fig:longI-cycle}\end{figure}
	\end{center}

	The	intersection form $\langle \cdot, \cdot \rangle$ on $H_1(\w(\Gamma))$ plays a key role in understanding cluster structures coming from Lagrangian fillings. If we consider a pair of 1-cycles $\gamma_1, \gamma_2\in H_1(\w(\Gamma))$ with a fixed orientation and a nonempty geometric intersection in $\w$, then we can assign a sign to their intersection. 
 We refer to the signed count of the intersection of $\gamma_1$ and $\gamma_2$ as their algebraic intersection and denote it by $\langle \gamma_1, \gamma_2\rangle.$ 
	\subsubsection{Mutations of weaves}\label{sub: mutation}

	In this section we describe Legendrian mutation, a geometric operation that we can use to generate additional Legendrian weaves. Given a Legendrian weave $\w$ and a 1-cycle $\gamma\in H_1(\w;\Z)$ that bounds an embedded Lagrangian disk in the complement $T^*\D^2\backslash L(\w)$, the Legendrian mutation $\mu_\gamma(\w)$ outputs a Legendrian weave smoothly isotopic to $\w$ but that is generally not Legendrian isotopic to $\w$.

	Combinatorially, we can describe Legendrian mutation in terms of the $N$-graph associated to a weave. Figure \ref{fig:Mutations} (left) depicts mutation at a short {\sf I}-cycle, while Figure \ref{fig:Mutations} (right) depicts mutation at a short {\sf Y}-cycle. See \cite[Section 4.9]{CasalsZaslow} for a more general description of mutation at long {\sf I}- and {\sf Y}-cycles in $N$-graphs. The geometric operation above coincides with the combinatorial manipulation of the $N$-graphs. Specifically, for any two $N$-graphs, $\Gamma$ and $\Gamma'$ related by either of the combinatorial moves described in Figure \ref{fig:Mutations}, the corresponding Legendrian weaves $\w(\Gamma)$ and $\w(\Gamma')$ are mutation-equivalent relative to their boundary \cite[Theorem 4.2.1]{CasalsZaslow}.
	
	\begin{center}\begin{figure}[h!]{ \includegraphics[width=.9\textwidth]{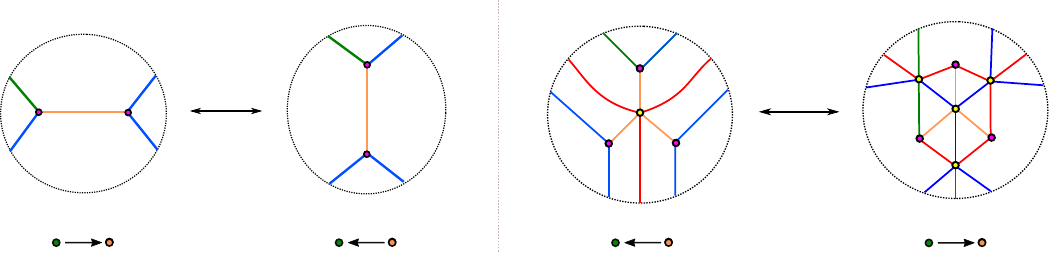}}\caption{Mutations at the orange short {\sf I}-cycle (left) and short {\sf Y}-cycle (right) together with the associated intersection quivers. In both cases, the dark green edge depicts the effect of mutation on an arbitrary cycle intersecting the orange cycle.}\label{fig:Mutations}\end{figure}
	\end{center}

	\subsubsection{Quivers from Weaves}\label{quivers}
	
	We complete our discussion of Legendrian weaves by describing quivers and how they arise via the intersection form on $H_1(\w;\Z).$ A quiver is a directed graph without loops or oriented 2-cycles. In the Legendrian weave setting, the data of a quiver can be extracted from a given weave and a basis of its first homology via the intersection form. The intersection quiver is defined as follows: for every basis element $\gamma_i\in H_1(\w(\Gamma);\Z)$ we have a vertex $v_i$ in the quiver; there are $k$ arrows pointing from $v_j$ to $v_i$ if $\langle \gamma_i, \gamma_j\rangle=k$ for $k>0$.

	The combinatorial operation of quiver mutation at a vertex $v$ is defined as follows, see e.g. \cite[Definition 2.1.2]{FWZ1}. First, for every pair of incoming edges and outgoing edges, we add an edge starting at the tail of the incoming edge and ending at the head of the outgoing edge. Next, we reverse the direction of all edges adjacent to $v$. Finally, we cancel any directed 2-cycles. If we started with the quiver $Q$, then we denote the quiver resulting from mutation at $v$ by $\mu_v(Q).$

	The following theorem relates the two operations of quiver mutation and Legendrian mutation:

	\begin{theorem}[\cite{CasalsZaslow}, Section 7.3]
		Given an $N$-graph $\Gamma$, Legendrian mutation at an embedded cycle $\gamma$ induces a quiver mutation of the associated intersection quivers, taking $Q(\Gamma, \{\gamma_i\})$ to $\mu_\gamma(Q(\Gamma, \{\gamma_i\})).$ \hfill $\Box$
	\end{theorem}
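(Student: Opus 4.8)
The plan is to establish a local-to-global correspondence between the geometric operation of Legendrian mutation on the weave $\w(\Gamma)$ and the combinatorial quiver mutation $\mu_\gamma$ on the intersection quiver $Q(\Gamma, \{\gamma_i\})$. Since both operations are defined purely in terms of local data near the mutating cycle $\gamma$, I would first reduce to checking the effect of mutation on the intersection numbers $\langle \gamma_i, \gamma_j \rangle$ for the chosen basis of $H_1(\w(\Gamma);\Z)$. The key point is that the intersection quiver is determined entirely by this bilinear form, so it suffices to show that Legendrian mutation at $\gamma$ transforms the intersection form exactly according to the quiver mutation rule at the corresponding vertex.

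First I would recall from Subsection \ref{sub: mutation} the explicit combinatorial models for Legendrian mutation at a short {\sf I}-cycle and a short {\sf Y}-cycle, as depicted in Figure \ref{fig:Mutations}. The figures already indicate the effect of mutation on an arbitrary cycle intersecting the mutating cycle (the dark green edge), and the crucial observation is that this effect mirrors the composition of arrows in the quiver: a cycle $\gamma_j$ with an arrow into the mutated vertex $v$ and a cycle $\gamma_k$ with an arrow out of $v$ acquires a new composed contribution to $\langle \gamma_j, \gamma_k \rangle$ after mutation. Concretely, I would track how the mutated cycle $\gamma' = \mu_\gamma(\gamma)$ reorganizes the Lagrangian disk in the complement of $L(\w)$, and verify that the new basis obtained by replacing $\gamma$ with $\gamma'$ (and adjusting the other basis elements in the standard way) produces intersection numbers matching the three steps of quiver mutation: adding a composed arrow for each incoming–outgoing pair, reversing arrows adjacent to $v$, and cancelling oriented $2$-cycles.

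The main obstacle will be the bookkeeping of signs in the intersection form, and in particular verifying that the cancellation of oriented $2$-cycles in quiver mutation corresponds precisely to the algebraic (signed) cancellation of geometric intersection points after the Legendrian mutation. Orientations of the $\mathbb{L}$-compressing cycles and the sign conventions for $\langle \cdot, \cdot \rangle$ must be fixed consistently so that the geometric picture in Figure \ref{fig:Mutations} yields the correct signed counts; the reversal of arrows adjacent to $v$ should then follow from the fact that mutating $\gamma$ to $\gamma'$ reverses the orientation of $\gamma$ relative to each intersecting cycle. Once the short {\sf I}- and {\sf Y}-cycle cases are confirmed, the general case of long {\sf I}- and {\sf Y}-cycles follows from the more general description in \cite[Section 4.9]{CasalsZaslow}, since these cycles are built by concatenating the local models and the intersection form is additive over such concatenations.

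Finally, I would invoke \cite[Theorem 4.2.1]{CasalsZaslow}, which guarantees that the combinatorial $N$-graph moves of Figure \ref{fig:Mutations} realize genuine Legendrian mutations of the weave surface relative to the boundary, so that the combinatorial verification above indeed computes the intersection quiver of the mutated weave. Assembling these pieces shows that Legendrian mutation at $\gamma$ sends $Q(\Gamma, \{\gamma_i\})$ to $\mu_\gamma(Q(\Gamma, \{\gamma_i\}))$, as claimed.
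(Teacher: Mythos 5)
This statement is quoted from \cite{CasalsZaslow} and the paper gives no proof of its own (note the terminal $\Box$), so there is nothing internal to compare against; the relevant question is whether your sketch is a viable reconstruction of the argument in the cited reference. It essentially is: the proof there also proceeds by a local verification at short {\sf I}- and {\sf Y}-cycles, using the fact that the quiver is determined by the intersection form on $H_1(\w(\Gamma);\Z)$ and that the combinatorial $N$-graph moves of Figure \ref{fig:Mutations} realize genuine Legendrian mutations. One refinement worth making: rather than verifying the three steps of quiver mutation (composing arrows, reversing arrows at $v$, cancelling $2$-cycles) as separate geometric phenomena, the cleaner route is to observe that Legendrian mutation acts on the chosen basis by a single Picard--Lefschetz-type change of basis, $\gamma\mapsto -\gamma$ and $\gamma_i\mapsto \gamma_i+[\langle\gamma_i,\gamma\rangle]_+\,\gamma$, and then check by direct computation that the new intersection numbers $\langle\gamma_i',\gamma_j'\rangle$ reproduce the mutated exchange matrix; all three steps of quiver mutation, including the $2$-cycle cancellation you flag as the delicate sign issue, fall out of this one formula simultaneously. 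Your reduction of the long {\sf I}- and {\sf Y}-cycle cases to ``additivity of the intersection form over concatenation'' is stated a bit too quickly --- the mutation move at a long cycle is a distinct combinatorial operation that must be checked against the same change-of-basis formula, as is done in \cite[Section 4.9]{CasalsZaslow} --- but the overall strategy is sound.
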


	\section{Microlocal theory of sheaves}\label{sec: sheaves}

	In this section we define the moduli stacks $\FM(\La, T)$ and $\SM_1(\La)$ of microlocal rank-one sheaves with prescribed singular support and describe the sheaf-theoretic components of the cluster $\mathcal{A}$ and $\mathcal{X}$ spaces considered in \cite{CasalsWeng}.

\subsection{Sheaves on Legendrian links}

We discuss here the microlocal theory of sheaves as it appears in the setting of Legendrian links and their exact Lagrangian fillings. In full generality, the sheaf moduli we consider are constructed as the moduli of certain objects of a particular subcategory of the dg-derived category of sheaves of $k$-modules with singular support contained in a given Legendrian link. For a brief review of the appropriate categorical framework, see \cite[Section 2.7]{CasalsWeng} or \cite[Appendix A]{CasalsLi}.

Given a constructible sheaf $F\in Sh(M)$, the singular support $SS(F)$ of $F$ is a conical Lagrangian in $T^*M$. Quotienting by the natural $\R$-action yields a Legendrian in the unit-cotangent bundle $T^\infty M$. Fixing a co-orientation along any front projection $\Pi_{xz}(\La) \subseteq \R^2$ of a Legendrian $\La$ allows us to equivalently consider sheaves $F$ with $SS(F)$ as a subset of $\R^3$ or $J^1\S^1$. In \cite{STZ_ConstrSheaves}, the authors analyze singular support conditions in terms of a stratification of $\R^2$ induced by a front projection. By \cite[Proposition 5.17]{STZ_ConstrSheaves}, as all Legendrian links $\La\subseteq (\R^3, \xi_{\st})$ we consider admit front projections in $(J^1\S^1, \xi_{\st})$ without cusps, we may consider sheaves of vector spaces concentrated in degree 0 rather than working with sheaves of chain complexes.
We then obtain a combinatorial description of sheaves $F$ with singular support $SS(F)\subseteq \La$ as follows. For $\Pi_{xz}(\La)$ a front of a Legendrian satisfying the conditions given above, we associate to each region of $\R^2\backslash\Pi_{xz}(\La)$ a vector space -- the stalk of $F$ within that region -- such that the unbounded region of $\Pi_{xz}(\La)$ is assigned the zero vector space. The appropriate singular support conditions are encoded by conditions on the maps induced by restriction near arcs, crossings, and cusps.

\begin{proposition}[\cite{STZ_ConstrSheaves}] \label{prop: STZ}
    Let $\La$ be a front with binary Maslov potential. A sheaf $F$ on $\R^2$ satisfying $SS(F)\subseteq \La$ is given by the following 
    conditions on the vector spaces assigned to each region of $\Pi_{xz}(\La)$.

    \begin{enumerate}
        \item The zero vector space is assigned to the unbounded region.
        \item The vector spaces assigned to any two regions separated by an arc differ in dimension by $r$.
        \item At each cusp, the composition of maps in Figure \ref{fig:SingSupp} is the identity.
        \item At each crossing, the sequence $0\to S\to E\oplus W \to N\to 0$ is exact and the four maps form a commuting square. 
    \end{enumerate}
\end{proposition}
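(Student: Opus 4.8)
The plan is to reduce the global statement to three purely local computations, one for each stratum type of $\R^2$ cut out by the co-oriented front $\Pi_{xz}(\La)$, and to carry each out with the Kashiwara--Schapira microlocal criterion. I would first record that $SS(F)$ is a closed, conic, \emph{local} invariant, characterized by the vanishing of local cohomology: a point $(x,\xi)\in T^*\R^2$ lies outside $SS(F)$ exactly when $(R\Gamma_{\{\phi\geq 0\}}F)_x\simeq 0$ for some (equivalently any) $C^1$ function $\phi$ with $\phi(x)=0$ and $d\phi(x)=\xi$, in a neighborhood of $x$. Since after a small perturbation every front we consider is locally a smooth arc, a standard cusp, or a transverse crossing, checking $SS(F)\subseteq\La$ amounts to verifying these vanishings in the three local models, where the codirections to be excluded are precisely those \emph{not} conormal to the front (or conormal but disagreeing with the chosen co-orientation). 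The binary Maslov potential is what fixes these co-orientations consistently and, via the cusp-free reduction of \cite{STZ_ConstrSheaves}, lets me work with sheaves concentrated in a single degree. Before the local analysis I would note that any such $F$ is constructible for the stratification induced by $\Pi_{xz}(\La)$, hence equivalent to the data of a vector space on each region together with generization maps across arcs; condition (1) is then just the normalization that $F$ vanishes near infinity, which is automatic for the sheaves attached to a link.

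Next I would run the three local models. \emph{Smooth arc:} here $\La$ is half the conormal of the arc, so excluding every codirection except the co-oriented conormal forces the stalk to be locally constant on each side, and the single surviving microlocal stalk---of rank equal to the microlocal rank $r$---measures the jump across the arc, giving condition (2). \emph{Cusp:} the local front has three regions and $\La$ is the standard cusp; the relevant local cohomology computes the cone of $(\text{composite}-\mathrm{id})$ of the two generization maps around the cusp point, and its vanishing is exactly the identity-composition statement of condition (3). \emph{Crossing:} near a transverse crossing the four regions $S,E,W,N$ carry stalks fitting into a commuting square, and $\La$ is the union of the two crossing conormals; the codirections that must be excluded are the ``diagonal'' ones pointing into the crossing, and computing $R\Gamma_{\{\phi\geq 0\}}F$ for these identifies it, up to shift, with the cohomology of the two-term complex $S\to E\oplus W\to N$. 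Requiring this to vanish in all degrees is precisely the exactness of $0\to S\to E\oplus W\to N\to 0$ together with the commuting (bicartesian) square, which is condition (4).

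Finally I would glue: because singular support is local and these three models exhaust the strata of the front, $SS(F)\subseteq\La$ holds globally if and only if each of conditions (1)--(4) holds region-by-region. I expect the crossing to be the main obstacle, since one must carefully track which co-orientations designate the source $S$ and sink $N$, confirm which diagonal codirections they produce, and check that the bicartesian square is exactly the condition that kills \emph{all} of them simultaneously; the cusp computation is then a lower-rank degeneration of the same mechanism, and the arc case is its one-sided version.
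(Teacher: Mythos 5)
This proposition is imported verbatim from \cite{STZ_ConstrSheaves} and the paper gives no proof of it, so there is nothing internal to compare against; your sketch should instead be measured against the original argument of Shende--Treumann--Zaslow, and it matches it: locality of $SS(F)$ via the Kashiwara--Schapira non-characteristic vanishing $(R\Gamma_{\{\phi\geq 0\}}F)_x\simeq 0$, reduction to the three local models (arc, cusp, transverse crossing), and identification of the local cohomology at a crossing with the total complex of the square $S\to E\oplus W\to N$, whose acyclicity is exactly the bicartesian/exactness condition (4). The skeleton is correct and is essentially their proof; what remains is to actually carry out the three local computations (in particular, at the arc the condition is not only the dimension jump but that the generization map against the co-orientation is an isomorphism, and the binary Maslov potential is what pins down the degree in which the microlocal stalk sits), but these are routine once the setup you describe is in place.
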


	\begin{center}\begin{figure}[h!]{ \includegraphics[width=.8\textwidth]{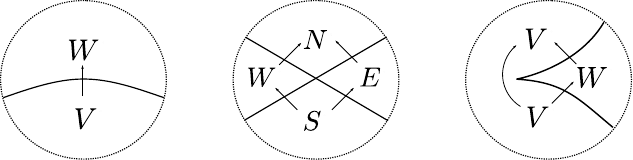}}\caption{Local models of a front projection of $\La$ and the maps induced by restriction of sheaves. }\label{fig:SingSupp}\end{figure}
	\end{center}

 Such sheaves are necessarily constructible with respect to a stratification induced by a front projection of $\La$. The crossing condition (4) can also be phrased as the existence of an isomorphism between the cokernels of the maps $S\to E$ and $W\to N$ -- equivalently, $S\to W$ and $E\to N$. This isomorphism at each crossing of $\Pi_{xz}(\La)$ yields a rank-$r$ local system on $\La$ that we refer to as the microlocal monodromy of $\La$. The number $r$ is the microlocal rank of $F$. We require $r=1$ and denote by $\SM_1(\La)$ the moduli of microlocal rank-one sheaves $F$ with $SS(F)\subseteq \La$. By \cite{GKS_Quantization}, $\SM_1(\La)$ is a Legendrian isotopy invariant of $\La$.
 
 Following Proposition \ref{prop: STZ}, we can describe the sheaf moduli as a space of flags with certain transversality conditions given by the front projection as follows. Let $\La\subseteq J^1\S^1$ be the $(-1)$-framed closure of an $N$-stranded positive braid $\beta\in \Br_N^+$. The crossing singularities of the front $\Pi_{xz}(\La)\subseteq S^1\times \R$ divide $S^1\times \R$ into regions $[a_i, a_{i+1}]\times \R$. Given our description of $\SM_1(\La)$ above, each sheaf $F\in \SM_1(\La)$ can be characterized as the assignment of a locally constant vector space in each region of $[a_i, a_{i+1}]\times \R$, divided by the strands of $\beta$. The microlocal rank-one condition implies that the rank of these vector spaces increases by one as we pass from a region to the one immediately above it. This sequence of vector spaces $0\subseteq V_i^{1}\subseteq \dots \subseteq V_i^{n}=\C^n$ can be thought of as a flag $V_i^{\bullet}$ in $\C^N$. The singular support condition at the crossing corresponding to the Artin generator $\sigma_j$ implies that the two flags $V_{i}^\bullet$ and $V_{i+1}^{\bullet}$ differ at the $j$th position; in other words, $V_{i}^{j}$ is transverse to $V_{i+1}^{j}$. The space $\SM_1(\La)$ can therefore be understood as the space of flags satisfying these transversality conditions, modulo a choice of basis.

\subsection{Sheaves on Legendrian weaves}

In order to build cluster structures on $\SM_1(\La)$ and the decorated sheaf moduli $\FM(\La)$ described in the following subsection, we must explain how an embedded exact Lagrangian filling of $\La$ yields a toric chart $(C^\times)^{n}$ in the (decorated) sheaf moduli of $\La$. By work of Jin and Treumann, an embedded exact Lagrangian filling $L$ of $\La$ has sheaf moduli equivalent to the space of local systems on $L$ via the microlocal monodromy functor applied to the Legendrian lift of $L$ \cite[Section 1.7]{JinTreumann17}. Restricting to rank-one local systems on $L$ yields an embedding of a toric chart $(\C^\times)^{b_1(L)}\hookrightarrow \SM_1(\La)$ where $b_1(L)$ denotes the first Betti number of $L$. We can compute these local systems explicitly by analyzing the singularities of the fronts of Legendrian weaves.

 As described in \cite[Section 5.3]{CasalsZaslow} the data of $\SM_1(\w(\Gamma))$ is equivalent to providing: \begin{enumerate}
		\item[(i)] An assignment to each region $R$ (connected component of $\D^2\backslash \Gamma$) of a flag $V^\bullet(R)$ in the vector space $\C^N$.
		\item[(ii)] For each pair $R_1, R_2$ of adjacent regions sharing an edge labeled by $\sigma_i$, we require that the corresponding flags satisfy 
		$$V^j(R_1)=V^j(R_2), \qquad 0\leq j\leq N, j\neq i, \qquad \text{ and } \qquad V^i(R_1)\neq V^i(R_2).$$  
	\end{enumerate}
	Finally, we consider the space of flags satisfying (i) and (ii) modulo the diagonal action of $GL_N(\C)$ on $V^\bullet$. By \cite[Theorem 5.3]{CasalsZaslow}, the flag moduli space is isomorphic to the space of microlocal rank-one sheaves $\SM_1(\w(\Gamma))$. 
 
 To better understand local systems on $\w(\Gamma)$, we give examples of the flag moduli space in a neighborhood of homology cycles of $\w(\Gamma)$. In the short {\sf I}-cycle case, when the edges are labeled by $\sigma_1$, the moduli space is determined by four lines $a\neq b \neq c\neq d\neq a$, as pictured in Figure \ref{fig:flags} (left). 
 Around a short {\sf Y}-cycle, the data of the flag moduli space is given by three distinct planes $A\neq B \neq C\neq A $ contained in $\C^3$ and three distinct lines $a\subsetneq A, b\subsetneq B, c\subsetneq C$ with $a\neq b\neq c\neq a,$ as pictured in Figure \ref{fig:flags} (right).
	
	\begin{center}\begin{figure}[h!]{ \includegraphics[width=.6\textwidth]{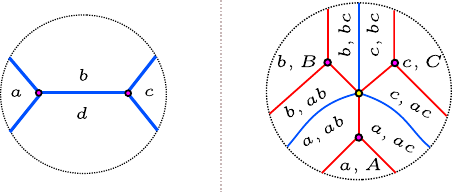}}\caption{The data of the flag moduli space given in the neighborhood of a short {\sf I}-cycle (left) and a short {\sf Y}-cycle (right). Lines are represented by lowercase letters, while planes are written in uppercase. The intersection of the two lines $a$ and $b$ is written as $ab$. } 
			\label{fig:flags}\end{figure}
	\end{center}

As in the case of a Legendrian $\La$ in $J^1\S^1$, Condition (4) of Proposition \ref{prop: STZ} allows us to compute the microlocal monodromy about an absolute cycle $\gamma\in H_1(L(\w))$ as a composition of isomorphisms between cokernels of maps of vector spaces. Since it is locally defined, we can compute the microlocal monodromy about an {\sf I}-cycle or {\sf Y}-cycle using the data of the flag moduli space in a neighborhood of the cycle. If we have a short {\sf I}-cycle $\gamma$ with flag moduli space described by the four lines $a, b, c, d$, as in Figure \ref{fig:flags} (left), then the microlocal monodromy about $\gamma$ is given by the cross ratio 
	$$\frac{a\wedge b}{b\wedge c}\frac{c\wedge d}{d\wedge a}$$
	Similarly, for a short {\sf Y}-cycle with flag moduli space given as in Figure \ref{fig:flags} (right), the microlocal monodromy is given by the triple ratio
	$$\frac{B(a)C(b)A(c)}{B(c)C(a)A(b)}$$ 

where we interpret the plane $B$ as a covector in $\C^3$ to define the pairing $B(a)$. 


\subsubsection{The decorated sheaf moduli}

In preparation for describing cluster $\mathcal{A}$ spaces in the next section, we introduce the decorated sheaf moduli $\FM(\La, T)$. To do so, we must first specify a trivialization of the abelian local system given by the microlocal monodromy about each component of $\La$. Let $T=\{t_1, \dots, t_k\}$ be a set of marked points on $\La$ with every component of $\La$ carrying at least one marked point. Label the connected components of $\La\backslash T$ by the pair of endpoints of each segment $(t_i, t_{i+1})$ where indices are taken modulo the number of marked points on the appropriate component. The decorated moduli stack $\FM(\La, T)$ is then given by the additional data of a trivialization for every segment of each component of $\La$.

\begin{equation*}
    \FM(\La, T):=\{(F, \phi_1, \dots, \phi_k) | F\in \SM_1(\La), \phi_i \text{ is a trivialization of } \m_\La \text{ on } (t_i, t_{i+1})\}
\end{equation*}

Two framings are equivalent if they differ by a global factor of $\C^\times$. As in \cite[Section 2.8]{CasalsWeng} we define the moduli space $\SM_1(\La, T)$ analogously by the addition of the trivialization data associated with the marked points. In this work, we choose to suppress $T$ from our notation when our discussion does not depend on the presence of marked points.

The data of these trivializations have two primary motivations. First, it allows us to describe $\FM(\La, T)$ as a smooth affine scheme rather than an Artin stack. Second, the trivializations give us the necessary data for defining microlocal merodromies as a microlocal version of parallel transport, yielding regular functions on $\FM(\La, T)$, as we now describe.

To describe cluster-$\mathcal{A}$ coordinates on $\FM(\La, T)$, we start with a basis $\{\gamma_i\}_{i=1}^n$ of $H_1(L, T)$ containing a maximal linearly independent subset of $\mathbb{L}$-compressing cycles of $L$. We then identify the lattice $H_1(L, T)$ with an isomorphic lattice $H_1(L\backslash T, \La\backslash T)$, and consider the dual basis of cycles $\{\gamma_i^\vee\}_{i=1}^n$.\footnote{In order to more naturally incorporate frozen cluster variables, one can also consider intermediate lattices $M$ and $N$ satisfying $H_1(L) \hookrightarrow N \hookrightarrow H_1(L, T)$ and $H_1(L, \La)\twoheadleftarrow M \twoheadleftarrow H_1(L\backslash T, \La\backslash T)$. See \cite[Remark 3.50]{CasalsWeng} for more details.} 

Given an oriented relative cycle $\gamma^\vee\in H_1(L\backslash T, \La\backslash T)$ starting at $s$ and ending at $t$ we first note that the framing data of $\FM(\La, T)$ specifies two vectors $\phi_{s} \in \Phi_{s}$ and $\phi_{t}\in \Phi_{t}$ at $s$ and $t$. The result of parallel transport along $\gamma^\vee$ yields a nonzero vector $\gamma^\vee(\phi_{s})\in \Phi_{t}$.\footnote{In this section, we omit the discussion of sign curves appearing in \cite[Section 4.5]{CasalsWeng}.} 

\begin{definition}
    Given an $\mathbb{L}$ compressing cycle $\gamma\in H_1(L, T)$, the microlocal merodromy along its dual relative cycle $\gamma^\vee\in H_1(L\backslash T, \La\backslash T)$ is the function $A_{\gamma^\vee}=\gamma^\vee(\phi_{s})/\phi_{t}$.

\end{definition}

One can compute $A_{\gamma^\vee}$ explicitly from a Legendrian weave $\w$ by lifting the framing data to a set of decorations. Given a flag $V^\bullet$ with framing data $\phi_i=V^i/V^{i-1}$, we can construct a volume form $\alpha_i\in \bigwedge^i V^i$ by first lifting $\phi_i$ to a nonzero vector $\tilde{\phi}_i\in V_i$ and then setting $\alpha_i=\tilde{\phi}_i\wedge \dots \wedge \tilde{\phi}_1.$ At an edge of $\w$ labeled by $\sigma_i$, we have flags $\mathcal{L}^\bullet$ and $\mathcal{R}^\bullet$ to the left and right of the $\sigma_i$ edge with framing data $\{\la_i\}$ and $\{\rho_i\}$, respectively. Denote by $\{\alpha_i\}$ and $\{\beta_i\}$ the decorations corresponding to the framings on $\mathcal{L}^\bullet$ and $\mathcal{R}^\bullet$. The parallel transport of $\la_i$ along an oriented curve $\eta$ from the $i$th sheet on the left to the $i+1$st sheet on the right can then be computed by 
\begin{equation}\label{eq: merodromy1}\eta(\la_i)=\left(\frac{\tilde{\la}_i\wedge \tilde{\rho}_{i}\wedge \alpha_{i-1}}{\tilde{\rho}_{i+1}\wedge\beta_i}\right)\rho_{i+1}.\end{equation}
Similarly, the parallel transport of $\la_{i+1}$ along the oriented curve $\eta'$ from the $i+1$st sheet on the left to the $i$th sheet on the right yields 

\begin{equation}\label{eq: merodromy2}\eta(\la_{i+1})=\left(\frac{\alpha_{i+1}}{\tilde{\rho}_i\wedge \tilde{\la}_{i}\wedge \alpha_{i-1}}\right)\rho_{i}.\end{equation}

Composing Equations \ref{eq: merodromy1} and \ref{eq: merodromy2}, allows us to compute the microlocal merodromy along any oriented curve in $\w$. By \cite[Proposition 4.29]{CasalsWeng}, the microlocal merodromy $A_{\gamma^\vee}$ along a relative cycle $\gamma^\vee \in H_1(\w\backslash T, \La\backslash T)$ dual to an $\mathbb{L}$-compressing cycle $\gamma$ is a regular function on $\FM(\La, T)$. As we explain in the following section, this regular function is actually a cluster variable, allowing us to define a cluster-$\mathcal{A}$ structure on $\FM(\La, T)$.

\section{Cluster algebras and cluster ensembles}\label{sec: Clusters}
	In this section, we define cluster algebras and cluster varieties and describe how they appear in a contact-geometric context. We start by introducing the basic definitions and then discuss some specific cluster algebras related to tagged triangulations of surfaces. The subsection ends with a description of cluster modular groups. See \cite{FWZ1, FWZ2} for an introductory reference on cluster theory.  

\subsection{Cluster algebras}

To an initial quiver $Q_0$ with $n$ vertices, we associate an initial set of variables $a_1, \dots a_n$, one for each vertex. Together, the $n-$tuple $\bf{a}=(a_1, \dots, a_n)$ and the quiver $Q_0$ form a cluster seed $(\bf{a}, Q_0)$. We designate a subset of vertices $Q_0^{mut}$ to be the mutable part of the quiver. The vertices in $Q_0\backslash Q_0^{mut}$ are designated as frozen and we require that there are no arrows between them. For a cluster seed $(\bf{x}, Q)$ we can denote by $b_{ij}$ the multiplicity of arrows in $Q$ from vertex $i$ to vertex $j$. We then obtain a skew-symmetric matrix, $B=(b_{ij})$, known as the exchange matrix, encoding the information of the quiver.
There are two types of cluster algebras, type $\mathbb{A}$ or type $\mathbb{X}$, depending on the precise form of the mutation formula relating different cluster variables.

\begin{definition}
    Let $(\bf{a}, Q)$ be a cluster seed and $k\in Q^{mut}$ be a mutable vertex. The cluster $\mathbb{A}$ seed mutation $\mu_k$ is an operation taking as input the seed $(\bf{a}, Q)$ and outputting the new seed $(\bf{a}', Q')$ where $Q'$ is related to $Q$ by quiver mutation at vertex $k$ and $\bf{a}'$ is related to $\bf{a}$ by $a_i'=a_i$ for all $i\neq k$ and 
  
   $$a_ka_k'=\prod_{b_{ik}>0} a_i^{b_{ik}} + \prod_{b_{ik<0}} a_i^{-b_{ik}}.$$

\end{definition}

Note that seed mutation is an involution, so that $\mu_k^2(\bf{a}, Q)=(\bf{a}, Q)$.

Denote by $\mathcal{F}$ the field of rational functions $\C(a_1, \dots a_n)$ and consider an initial seed $(\bf{a}, Q_0)\subseteq \mathcal{F}$. 
\begin{definition}
The type $\mathbb{A}$ cluster algebra generated by $(\bf{a}, Q_0)$ is the $\C$-algebra generated by all cluster variables arising in arbitrary mutations of the initial seed.     
\end{definition}

The type $\mathbb{X}$ cluster algebra is generated from an initial seed $(\bf{x}, Q_0)$ by the mutation formula 

$$x_j'=\begin{cases}
    x_j^{-1} & i=j\\
    x_j(y_k+1)^{-b_{kj}} & j\neq k, b_{kj}\leq 0 \\
    x_j(x_k^{-1}+1)^{-b_{kj}} & j \neq k, b_{kj}\geq 0
\end{cases} $$

A cluster algebra is of finite type if it has only finitely many distinct cluster seeds. Otherwise, it is of infinite type. Cluster algebras admit an ADE classification.

\begin{theorem}[Theorem 1.4, \cite{FominZelevinsky_ClusterII}]
Cluster algebras are of finite type if and only if their quiver is mutation equivalent to a Dynkin diagram of finite type with any orientation given to its edges     
\end{theorem}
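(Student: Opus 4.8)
The statement is the Fomin--Zelevinsky finite type classification, and I would follow the strategy of their original argument, separating the two implications and reducing everything to two self-contained pieces: a rank-two computation and a root-theoretic model of the Dynkin cases. Since the exchange matrices arising here are skew-symmetric (encoded by quivers), the relevant finite-type diagrams are the simply-laced ones $A_n, D_n, E_6, E_7, E_8$, though the argument is cleanest phrased for general skew-symmetrizable $B$ and the full finite-type Dynkin list. The key reformulation I would aim for is that a cluster algebra has finitely many seeds if and only if its exchange matrix is \emph{$2$-finite}, meaning every matrix $B'=(b'_{ij})$ in the mutation class of $B$ satisfies $|b'_{ij}b'_{ji}|\leq 3$ for all $i\neq j$, together with the combinatorial fact that a connected $2$-finite matrix is precisely one whose mutation class contains an orientation of a finite-type Dynkin diagram.

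For the direction ``Dynkin $\Rightarrow$ finite type'', I would build the root-theoretic model. Let $\Phi$ denote the root system of the given Dynkin diagram, with simple roots $\alpha_1,\dots,\alpha_n$, and let $\Phi_{\geq -1}=\Phi_{>0}\cup\{-\alpha_1,\dots,-\alpha_n\}$ be the set of \emph{almost positive roots}. By the Laurent phenomenon, every cluster variable is a Laurent polynomial in the initial cluster $(a_1,\dots,a_n)$, so it carries a denominator vector $\mathbf{d}=(d_1,\dots,d_n)$ recording the exponents in its reduced denominator $\prod_i a_i^{d_i}$, with the convention that each initial variable $a_i$ is assigned $-\alpha_i$. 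The crux is to prove that $\mathbf{d}\mapsto \sum_i d_i\alpha_i$ is a bijection between cluster variables and $\Phi_{\geq -1}$, the non-initial variables mapping to positive roots. I would establish this by tracking how $\mathbf{d}$-vectors transform under seed mutation and matching that transformation against the piecewise-linear action of the corresponding mutations on $\Phi_{\geq -1}$. Since $|\Phi_{\geq -1}|<\infty$, there are then finitely many cluster variables, and hence finitely many seeds.

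For the converse ``finite type $\Rightarrow$ Dynkin'', I would first dispatch the rank-two case: a rank-two exchange matrix with $|b_{12}b_{21}|=p$ produces cluster variables satisfying a three-term recurrence, and an explicit periodicity computation shows the algebra is finite exactly when $p\leq 3$, corresponding to $A_1\times A_1$, $A_2$, $B_2$, and $G_2$. Restricting any finite-type seed to a pair of indices $i,j$ and mutating yields a rank-two cluster algebra of finite type sitting inside it, forcing $|b'_{ij}b'_{ji}|\leq 3$ in every mutation-equivalent $B'$; thus finite type implies $2$-finiteness. It then remains to carry out the combinatorial classification of $2$-finite matrices, showing that any connected one is mutation-equivalent to an orientation of a Dynkin diagram.

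The main obstacle is precisely this combinatorial classification. One must show that the mutation-invariant bound $|b'_{ij}b'_{ji}|\leq 3$ is rigid enough to exclude every configuration outside the finite-type list: cycles must have controlled parity and edge-weight structure, no vertex may carry too large a total valence, and the admissible local pictures propagate only along trees or the specific shapes of the finite-type diagrams. Executing this requires an induction on the number of vertices together with a local analysis of how mutation at a vertex rewrites the edge weights in its neighborhood, ruling out in particular the extended (affine) Dynkin diagrams, which are the minimal non-$2$-finite obstructions. I expect this step to absorb the bulk of the work, whereas the reductions feeding the two implications are comparatively mechanical once the root model and the rank-two recurrence are in place.
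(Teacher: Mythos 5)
The paper does not prove this statement; it is quoted verbatim from Fomin--Zelevinsky's finite type classification and used as a black box (the surrounding text only notes that for quivers, i.e.\ the skew-symmetric case, the classification restricts to types $A$, $D$, $E$). Your outline faithfully reproduces the original Fomin--Zelevinsky argument --- the $2$-finiteness reformulation, the denominator-vector bijection with almost positive roots $\Phi_{\geq -1}$ for the Dynkin direction, the rank-two periodicity computation, and the combinatorial classification of $2$-finite matrices as the main labor --- and correctly identifies where the work lies, so there is nothing to correct.
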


Two quivers are mutation equivalent if one can be obtained by applying a sequence of quiver mutations to the other. For the simply-laced cases, this classification restricts our attention to ADE-type. We discuss some of the combinatorial ingredients for understanding cluster algebras of types $A_n$ and $D_n$ in Subsection \ref{sub: clustercombinatorics}. Beyond  cluster algebras of finite type, the next simplest families are cluster algebras arising from quivers of finite mutation type. These are cluster algebras with an underlying quiver that is mutation equivalent to only finitely many quivers. Cluster algebras from quivers of finite mutation type are classified in \cite{FeliksonShapiroTumarkin}.  
Among the finite mutation type cluster algebras, we have types $\tilde{A}_n, \tilde{D}_n,$ and others corresponding to triangulations of surfaces (See e.g. \cite{FominShapiroThurston}), as well as types $\tilde{E}_6, \tilde{E}_7, \tilde{E}_8, E_6^{(1,1)}, E_7^{(1,1)}, E_8^{(1,1)}$ and two additional exceptional quivers.

\subsection{Cluster ensembles from Legendrian links}

Following \cite{FockGoncharovClusterEnsemble}, 
we give a brief description of the notion of a (skew-symmetric)\footnote{For the skew-symmetrizable case, see Section \ref{Sec: Folding}.} cluster ensemble. We then give the necessary ingredients to understand how this structure arises from the pair of sheaf moduli $\FM(\La, T)$ and $\SM_1(\La, T)$.

A cluster ensemble consists of a pair of schemes $\mathcal{A}$ and $\mathcal{X}$ formed by birationally gluing algebraic tori according to certain input data. The spaces $\mathcal{A}$ and $\mathcal{X}$ are dual in the following sense. Consider an integer lattice $N$ with a skew-symmetric bilinear form containing a saturated sublattice  $N_{\uf}$ of $N$ known as the unfrozen sublattice. Denote the dual lattice $\Hom(N, \Z)$ by $M$. The lattice $N$ forms the character lattice of a cluster torus in the $\mathcal{X}$ variety, while the lattice $M$ forms the character lattice of a cluster torus in the $\mathcal{A}$ variety. Cluster tori are glued together by the birational map induced by the mutation formulas given in the previous subsection, with the skew-symmetric bilinear form giving the information of the exchange matrix. The ring of regular functions $\mathcal{O}(\mathcal{A})$ forms an (upper) cluster algebra.

An abbreviated statement of the main theorem of Casals and Weng tells us that for certain families of Legendrians $\La$, the moduli $\SM_1(\La, T)$ and $\FM(\La, T)$ form a cluster ensemble. The class of Legendrians they consider arises from a combinatorial construction known as a (complete) grid plabic graph. This class includes all Legendrian links considered in this work. A more precise summary of the main result of Casals and Weng is as follows:

\begin{theorem}[Theorem 1.1 \cite{CasalsWeng}]
    For $\La$ a Legendrian arising from a complete grid plabic graph, the decorated sheaf moduli $\FM(\La, T)$ admits a cluster $\mathcal{A}$ structure. Moreover, there is an explicitly constructed Legendrian 
weave filling $L$ of $\La$ with intersection quiver and sheaf moduli $\FM(L)$ giving the data of the initial seed.
 \end{theorem}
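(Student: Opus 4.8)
The plan is to build the initial cluster seed directly from the combinatorics of the grid plabic graph, to identify Legendrian mutation with cluster $\mathcal{A}$-mutation at the level of microlocal merodromies, and then to glue the resulting toric charts into a cluster variety that one checks equals $\FM(\La, T)$. First I would extract from the complete grid plabic graph both the Legendrian $\La$ and a distinguished Legendrian weave filling $L = L(\w(\Gamma))$, together with a basis $\{\gamma_i\}_{i=1}^n$ of $H_1(L, T)$ whose unfrozen part is a maximal collection of $\mathbb{L}$-compressing cycles (the short and long {\sf I}- and {\sf Y}-cycles read directly off the $N$-graph $\Gamma$). By the result of Jin and Treumann \cite{JinTreumann17}, this embedded filling contributes a toric chart $(\C^\times)^{b_1(L)} \hookrightarrow \FM(\La, T)$ whose coordinates are the microlocal merodromies $A_{\gamma_i^\vee}$; these are regular functions by Proposition 4.29 of \cite{CasalsWeng}, and are computed from $\w$ by composing the local parallel-transport formulas in Equations \ref{eq: merodromy1} and \ref{eq: merodromy2}. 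The initial exchange matrix $B = (b_{ij})$ is recorded by the intersection quiver $Q(\Gamma, \{\gamma_i\})$ arising from the intersection form $\langle \cdot, \cdot \rangle$ on $H_1(L, T)$, with a vertex frozen precisely when its dual relative cycle has boundary on $\La$.

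The crux is the mutation compatibility. Writing $A_i := A_{\gamma_i^\vee}$, I would show that Legendrian mutation $\mu_\gamma$ at an $\mathbb{L}$-compressing cycle $\gamma = \gamma_k$ transforms the merodromy coordinates by exactly the cluster $\mathcal{A}$-exchange relation
\[
A_k\, A_k' = \prod_{b_{ik} > 0} A_i^{\,b_{ik}} + \prod_{b_{ik} < 0} A_i^{\,-b_{ik}},
\]
leaving the other coordinates fixed. By the theorem of Section 7.3 of \cite{CasalsZaslow}, $\mu_\gamma$ already realizes quiver mutation $\mu_k$ on the intersection quiver, so it remains to verify the numerical exchange relation on the merodromies themselves. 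This I would do by a local computation in the flag moduli neighborhood of $\gamma$: for a short {\sf I}-cycle the relevant functions are the cross ratios of the four lines $a,b,c,d$ of Figure \ref{fig:flags}, and the exchange relation reduces to the three-term Pl\"ucker identity among $2\times 2$ minors; for a short {\sf Y}-cycle one instead uses the triple ratio of the planes and lines and a corresponding $3\times 3$ Pl\"ucker-type relation, extending to long cycles by the gluing of local germs along $\Gamma$. I expect this step to be the main obstacle: one must track the decorations $\{\alpha_i\}, \{\beta_i\}$ and the framing and sign conventions carefully so that the Pl\"ucker relation emerges with the correct monomials on the right-hand side and no stray units, since any bookkeeping error would spoil the match with the cluster mutation formula.

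Finally I would globalize. Each filling obtained from $L$ by an arbitrary sequence of Legendrian mutations contributes its own toric chart, and by the previous step the transition maps between overlapping charts are exactly the cluster $\mathcal{A}$-mutation birational maps. Gluing these charts reproduces the cluster variety associated to the initial seed $(\{A_i\}, Q(\Gamma, \{\gamma_i\}))$; the completeness hypothesis on the grid plabic graph is what guarantees that these charts cover $\FM(\La, T)$ and that the mutable directions exhaust the cluster variables, so that no stratum is missed. Comparing rings of regular functions, I would identify $\C[\FM(\La, T)]$ with the (upper) cluster algebra $\mathcal{O}(\mathcal{A})$, completing both claims: $\FM(\La, T)$ carries a cluster $\mathcal{A}$-structure, and the explicit weave filling $L$, with its intersection quiver and merodromy coordinates on $\FM(L)$, furnishes the initial seed.
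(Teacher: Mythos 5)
First, a remark on scope: the paper gives no proof of this statement at all --- it is quoted verbatim as Theorem 1.1 of \cite{CasalsWeng} --- so the only meaningful comparison is with the argument in that reference. Your outline does follow the broad architecture of Casals--Weng: an initial seed extracted from the weave $\w(\Gamma)$ attached to the grid plabic graph, microlocal merodromies $A_{\gamma^\vee}$ as candidate cluster variables, the intersection quiver on $H_1(L,T)$ as exchange matrix with frozens detected by boundary on $\La$, and compatibility of Legendrian mutation with the $\mathcal{A}$-exchange relation verified by a local Pl\"ucker-type identity in the flag moduli near a short {\sf I}- or {\sf Y}-cycle. Those are the right ingredients, and the cross-ratio/triple-ratio computation you propose is essentially how the exchange relation is checked (modulo the sign-curve and framing bookkeeping that you correctly identify as the delicate point).

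The genuine gap is in your globalization step. The toric charts coming from fillings do \emph{not} cover $\FM(\La, T)$ --- cluster varieties of this kind typically contain points lying in no cluster torus --- so ``gluing charts that cover the moduli'' is not an available argument, and completeness of the grid plabic graph is not a covering statement. What completeness actually buys is an initial weave filling carrying a maximal collection of $\mathbb{L}$-compressing cycles, i.e.\ enough geometrically realizable mutable directions that every unfrozen vertex of the intersection quiver can be mutated. The identification of $\C[\FM(\La, T)]$ with the cluster algebra is then obtained algebraically rather than by a covering argument: one shows $\FM(\La, T)$ is a smooth affine variety, that the exchange matrix has full rank, and that the union of the initial chart with its one-step mutations has complement of codimension at least two, so that a Starfish-Lemma-type argument identifies the coordinate ring with the upper cluster algebra; a further argument is needed to close the gap between the cluster algebra and the upper cluster algebra. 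Without this, your construction only produces a family of toric charts inside $\FM(\La, T)$ whose transition maps are cluster mutations --- it does not yet establish a cluster $\mathcal{A}$-structure on the coordinate ring, which is the content of the theorem.
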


 Recall that an $\mathbb{L}$-compressing cycle of a filling $L$ is a homology cycle $\gamma\in H_1(L)$ that bounds an embedded Lagrangian disk in the complement of $L$. Casals and Weng obtain mutable cluster $\mathcal{A}$ coordinates by computing microlocal merodromies along relative homology cycles of $L$ dual to $\mathbb{L}$-compressing cycles \cite[Section 4]{CasalsWeng}. They also obtain mutable cluster $\mathcal{X}$ variables of $\SM_1(\La)$ as microlocal monodromies about these $\mathbb{L}$-compressing cycles. These two homology lattices $H_1(L, \mathfrak{t})$ and $H_1(L\backslash \mathfrak{t}, \La \backslash \mathfrak{t})$ yield the dual lattices appearing in the definition of a cluster ensemble. Legendrian mutation of an $\mathbb{L}$-compressing cycle induces a cluster-$\mathcal{X}$ mutation on toric charts in $\SM_1(\La, T)$, while Legendrian mutation of its dual induces a cluster-$\mathcal{A}$ mutation on toric charts in $\FM(\La, T)$. Frozen variables correspond to either marked points or to homology cycles in a Legendrian weave filling $L$ that do not bound embedded Lagrangian disks in the complement $\D^4\backslash L$. 

\begin{remark}
    The cluster structures of \cite{CasalsWeng} are defined up to quasi-cluster equivalence. In order to simplify exposition, we omit any consideration of quasi-cluster equivalences and refer the interested reader to \cite[Appendix A]{CasalsWeng} for further details.
\end{remark}

\subsection{Cluster modular groups}

Given a cluster ensemble, one can consider maps that act on the ensemble by permuting the cluster tori in a way that respects the cluster structure. More precisely, a cluster automorphism of a cluster algebra $\mathbb{A}$ is a permutation of the cluster variables of $\mathbb{A}$ that sends cluster seeds to cluster seeds and commutes with mutation. 
Any cluster automorphism can be defined by the image of a single seed and necessarily preserves the underlying quiver up to a simultaneous change of orientation on all of the arrows \cite[Proposition 2.4]{AssemSchifflerShramchenko}. This leads to the following definition: 

\begin{definition}
    The (orientation preserving) cluster modular group $\SG(\mathbb{A})$ of a cluster algebra $\mathbb{A}$ is the group of maps $\pi$ permuting cluster variables and commuting with mutation such that the induced map on quivers $Q({\bf x})\to Q(\pi({\bf x}))$ is an (orientation preserving) quiver automorphism. 
\end{definition}

\begin{ex}
    For an $A_2$ cluster algebra, the cluster modular group $\SG(A_2)$ is isomorphic to $\Z_5$ and is generated by a $2\pi/5$ rotation of the triangulation corresponding to the initial quiver. Viewing each diagonal $D_{i,j}$ of the triangulation as the image of a Pl\"ucker coordinate $\Delta_{i,j}$ in the top-dimensional positroid strata of the Grassmannian $\Gr(2, 5)$, we can see that this cluster automorphism is given by the map $\Delta_{i, j}\mapsto\Delta_{i-1, j-1}$ for all $1\leq i< j\leq 5$.
\end{ex}

For all classes of cluster algebras discussed in this work, any cluster automorphism $\varphi$ can be given as a finite sequence of mutations $\mu_{v_1}, \mu_{v_2}, \dots, \mu_{v_m}$ and a permutation $\pi\in S_n$ of the quiver vertex labels. In order to avoid confusion, we fix the notation $\mu_{v_1}, \mu_{v_2}, \dots, \mu_{v_m}$ as denoting a sequence of mutations starting with $\mu_{v_1}$ and ending with $\mu_{v_m}$. When we need to specify the particular data of a cluster automorphism $\varphi$, we denote it by the tuple $\varphi=(\mu_{v_1}, \mu_{v_2}, \dots, \mu_{v_m}; \pi)$ with the permutation $\pi$ expressed in cycle notation. Following the conventions of \cite{AssemSchifflerShramchenko, KaufmanGreenberg}, we allow for cluster automorphisms $\varphi$ defined solely as a permutation of the quiver vertex labels without any mutations.

\begin{ex}
    Consider an $A_2$ quiver with vertices labeled 1 and 2 and an edge from 1 to 2. The cluster automorphism $\varphi=(\mu_1; (1\,2))$ also generates $\SG(A_2)$ and corresponds to a rotation of an initial triangulation of the pentagon by $6\pi/5$.
\end{ex}

In their initial work defining cluster automorphisms, \cite{AssemSchifflerShramchenko}, Assem, Schiffler and Shramchenko investigate cluster modular groups of finite, affine, and surface-type cluster algebras. In \cite{KaufmanGreenberg}, the authors give an alternate presentation for affine type cluster modular groups and compute cluster modular groups for extended affine types. In the extended affine case, part of their work builds on Fraser's investigation of cluster modular groups of Grassmannians in \cite{fraser2018braid}. Results relevant to this manuscript are summarized in Tables \ref{tab: generating}, \ref{tab: CMG Gr}, and \ref{tab: folded}.

We introduce the following notion in order to compare cluster automorphisms defined on different initial seeds.

\begin{definition}
    Two cluster automorphisms $\varphi_1$ and $\varphi_2$ are conjugate if they act identically on the set of cluster charts.
\end{definition}

In our proof of Theorem \ref{thm: gens}, we use the combinatorics of tagged triangulations to show that cluster automorphisms induced by Legendrian loops are conjugate to cluster automorphisms coming directly from quiver combinatorics.

 \subsubsection{Combinatorics of (tagged) triangulations}\label{sub: clustercombinatorics}
	
The combinatorics of (tagged) triangulations of punctured surfaces play a key role in defining and understanding many of the simpler classes of cluster algebras. For $A_n, D_n$ and $\tilde{D}_n$ type, many of the computations of cluster modular groups are most accessible through the combinatorics of tagged triangulations. More generally, we can define surface-type cluster algebras as cluster algebras whose underlying quiver comes from a (tagged) triangulation of a punctured surface. In this subsection, we discuss the necessary combinatorial ingredients for understanding tagged triangulations of surfaces in the context of cluster theory.

Let $\Sigma_{n_1, \dots, n_k}$ be a surface with $n_i$ marked points on the $i$th boundary component. We allow for $n_i=0$ and interpret this as a puncture of the surface. Following \cite{FominShapiroThurston}, we define (tagged) arcs, and (tagged) triangulations.

\begin{definition}
An arc $\gamma\in \Sigma_{n_1, \dots, n_k}$ is a curve in $\Sigma$ such that:
\begin{itemize}
    \item the endpoints of $\gamma$ lie on marked points;
    \item the interior of $\gamma$ does not intersect itself;
    \item the interior of $\gamma$ is disjoint from $\partial \S$ and marked points;
    \item $\gamma$ does not cut out an unpunctured monogon or an unpunctured bigon. 
\end{itemize}
\end{definition}

The last condition ensures that no arc is contractible to a point or into the boundary of $\Sigma$. We consider arcs equivalent up to isotopy. Two (isotopy classes of) arcs are said to be compatible if there are two arcs in their respective isotopy classes that do not intersect in the interior of $\Sigma_{n_1, \dots, n_k}$. A triangulation of $\Sigma_{n_1, \dots, n_k}$ is then a maximal pairwise compatible collection of (isotopy classes of) arcs. 

From a triangulation $\mathcal{T}$, we can produce a cluster algebra as follows. 
For every edge $\gamma_i$ in $\mathcal{T}$, we assign a vertex $v_i$ in our quiver $Q_\mathcal{T}$. The vertices corresponding to boundary edges of $\mathcal{T}$ are declared to be frozen. We add an edge from $v_i$ to $v_j$ if there is a face where $\gamma_j$ is counterclockwise from $\gamma_i$ and at least one of $v_i$ or $v_j$ is mutable. Note that we must cancel any oriented two-cycles once we have accounted for all of the edges in this fashion. To each vertex in the quiver, we assign a cluster variable which can be interpreted as measuring the length of the arc in an appropriate hyperbolic geometric context \cite{FominThurston18}. Mutation is given by exchanging one diagonal of a quadrilateral for the other.  

As defined, triangulations of $\Sigma_{n_1, \dots, n_k}$ do not realize every possible cluster seed in the corresponding cluster algebra if the number of punctures is at least one. This is due to the appearance of self-folded triangles, which produce arcs that cannot be mutated at. In order to represent all possible cluster seeds as a triangulation, we require additional decorations. 

We arbitrarily divide an arc into two ends and allow each end to be either tagged or untagged. To produce a tagged triangulation, we introduce additional compatibility relations. 

\begin{definition}[Definition 7.4 \cite{FominShapiroThurston}]
    Two tagged arcs $\gamma_1$ and $\gamma_2$ in $\Sigma_{n_1, \dots, n_k}$ are compatible if the following conditions hold:
    \begin{itemize}
        \item the untagged arcs corresponding to $\gamma_1$ and $\gamma_2$ are compatible;
        \item if the untagged arcs corresponding to $\gamma_1$ and $\gamma_2$ represent distinct isotopy classes and they share an endpoint $a$, then the tagging at the ends of $\gamma_1$ and $\gamma_2$ incident to $a$ coincide;
        \item if the untagged arcs corresponding to $\gamma_1$ and $\gamma_2$ lie in the same isotopy class, then at least one end of $\gamma_1$ must be tagged in the same way as the same end of $\gamma_2$.
    \end{itemize}
\end{definition}

To obtain a quiver from a tagged triangulation, we treat tagged arcs as normal arcs. For an arc $\gamma$ sharing the same endpoints as $\gamma'$, we use the face obtained by deleting $\gamma'$ to compute the direction of the arrows in the quiver. See Figure \ref{fig:TwicePuncturedTaggedTriangulations} (left) for an example. The cluster algebras constructed from (tagged) triangulations of a disk with either 0, 1, or 2 punctures are of type $A_{n}, D_{n},$ and $\tilde{D}_n$, respectively.

\subsubsection{Surface type cluster modular groups}
We highlight here results on cluster modular groups related to the theory of mapping class groups and tagged triangulations, as they will reappear in key technical arguments in Sections \ref{sec: CMGs} and \ref{sec: NielsenThurston}. We define the mapping class group $\Mod(\Sigma_{n_1, \dots, n_k})$ of a surface $\Sigma_{n_1, \dots, n_k}$ with $n_i$ marked points on the $i$th boundary component as the group of orientation-preserving homeomorphisms of $\Sigma$ fixing the set of punctures up to homeomorphisms isotopic to the identity. We interpret the case of $n_i=0$ as an interior puncture. The tagged mapping class group $\Mod_{tag} (\Sigma_{n_1, \dots, n_k})$ is defined to be the semidirect product of $\Mod(\Sigma_{n_1, \dots, n_k})$ with $\Z_2^p$ where $p$ is the number of interior punctures of $\Sigma$. The product structure is specified by the action of simultaneously swapping tags at all arcs incident to a given puncture. 

Denote by $\mathcal{A}(\Sigma_{n_1, \dots, n_k})$ the surface-type cluster algebra associated to $\Sigma_{n_1, \dots, n_k}$. The following theorem relates the tagged mapping class group of a surface and the cluster modular group of the associated cluster algebra.

\begin{theorem}[Proposition 8.5, \cite{BridgelandSmith_QuadDiff}]
Assume that $\Sigma_{n_1, \dots, n_k}$ is not a once or twice-punctured disk with four or fewer marked points on the boundary. Then
 $\Mod_{tag}(\Sigma_{n_1, \dots n_k})\cong \SG(\mathcal{A}(\Sigma_{n_1, \dots, n_k}))$.
\end{theorem}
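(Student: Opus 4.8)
The plan is to identify both sides with the automorphism group of a single combinatorial object — the tagged flip graph of $\Sigma := \Sigma_{n_1, \dots, n_k}$, with each vertex decorated by its signed adjacency quiver — and then to show that this automorphism group is realized geometrically. Throughout I use the Fomin--Shapiro--Thurston dictionary recalled above: tagged triangulations of $\Sigma$ are in bijection with the clusters of $\mathcal{A}(\Sigma)$, flips correspond to mutations, and the signed adjacency quiver of a tagged triangulation equals the quiver of the corresponding seed. Consequently the tagged flip graph, with each vertex labeled by its signed quiver, is isomorphic to the labeled exchange graph of $\mathcal{A}(\Sigma)$, and $\SG(\mathcal{A}(\Sigma))$ is by definition the group of quiver-preserving automorphisms of this graph.

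First I would construct the homomorphism $\Phi \colon \Mod_{tag}(\Sigma) \to \SG(\mathcal{A}(\Sigma))$. A tagged homeomorphism sends tagged arcs to tagged arcs and triangulations to triangulations, and since it carries each flip to a flip it induces a graph automorphism of the tagged flip graph; an orientation-preserving homeomorphism preserves the signed adjacency quiver exactly, so $\Phi$ lands in the orientation-preserving cluster modular group. The $\Z_2^p$ factor acts by the simultaneous tag-change at a puncture, which is itself a legitimate permutation of seeds, so $\Phi$ is well defined on the whole semidirect product and is visibly a homomorphism. For injectivity, if $[f] \in \ker\Phi$ then $f$ fixes every cluster, hence every tagged triangulation, hence — since any tagged arc appears as an edge of some tagged triangulation — every isotopy class of tagged arc. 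By the Alexander method and Ivanov's rigidity for the action of the mapping class group on the arc complex, a homeomorphism fixing every arc is isotopic to the identity, and fixing all tags forces the $\Z_2^p$ component to be trivial as well. This is precisely where the exclusion of the small once- and twice-punctured disks enters, as these are the surfaces on which the arc action fails to be faithful or admits extra symmetries.

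The main obstacle is surjectivity, that is, showing every cluster automorphism is geometric. Given $\psi \in \SG(\mathcal{A}(\Sigma))$, fix the seed $s_0$ corresponding to an initial tagged triangulation $T_0$; then $\psi(s_0)$ corresponds to a tagged triangulation $T_1$, and since $\psi$ preserves quivers, the signed adjacency quivers of $T_0$ and $T_1$ are isomorphic. The crux is to upgrade this abstract quiver isomorphism to a tagged homeomorphism $f$ with $f(T_0) = T_1$ that realizes it. For this I would invoke the reconstruction of $\Sigma$ and its triangulation from the associated quiver with potential of Labardini-Fragoso, together with the compatibility of flips and QP-mutations of Keller--Yang: the surface, its punctures, and the isotopy class of each arc are recoverable from the decorated quiver data, so any quiver isomorphism is induced by a homeomorphism. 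It is exactly the degenerate cases excluded in the hypothesis that admit quiver isomorphisms not arising this way, and verifying this dichotomy is the hard technical heart of the argument. Granting it, $\Phi([f])$ and $\psi$ agree on $s_0$, and since a cluster automorphism is determined by its action on a single seed, $\Phi([f]) = \psi$, which proves surjectivity and completes the isomorphism.
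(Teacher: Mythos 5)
First, a point of calibration: the paper does not prove this statement at all --- it is quoted from \cite{BridgelandSmith_QuadDiff} and used as a black box in Sections \ref{sec: CMGs} and \ref{sec: NielsenThurston}, so there is no internal proof to compare yours against. Judged on its own terms, your architecture is the standard one: the Fomin--Shapiro--Thurston dictionary identifies the labeled exchange graph of $\mathcal{A}(\Sigma_{n_1,\dots,n_k})$ with the tagged flip graph, the homomorphism $\Mod_{tag}(\Sigma_{n_1,\dots,n_k})\to\SG(\mathcal{A}(\Sigma_{n_1,\dots,n_k}))$ is the evident one, and injectivity via the Alexander method is correct, with the small once- and twice-punctured disks excluded precisely because there the arc action fails to be faithful or the (finite-type) quiver acquires extra symmetries, e.g.\ the triality of the $D_4$ quiver for the once-punctured square.

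The one place where your argument does not yet close is surjectivity. The Labardini-Fragoso reconstruction theorem tells you that the surface, with its punctures and marked points, is determined up to homeomorphism by the quiver with potential of any one of its triangulations; it does not by itself tell you that the \emph{specific} bijection of tagged arcs from $T_0$ to $T_1$ induced by a given cluster automorphism $\psi$ is realized by a homeomorphism, which is what you actually need. A priori an abstract isomorphism of signed adjacency quivers could permute arcs in a way no homeomorphism does --- and in the excluded cases it genuinely does, which is exactly why they are excluded. The standard way to close this step is different: a quiver-preserving automorphism of the exchange graph induces a simplicial automorphism of the tagged arc complex, and one then invokes the rigidity theorem that every automorphism of the arc complex of a non-exceptional surface is geometric (Irmak--McCarthy, Korkmaz, Disarlo, extended to the tagged setting), together with a check that orientations and taggings are matched. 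You correctly identify this as ``the hard technical heart,'' but the tool you name is not quite the one that does the job; with arc-complex rigidity substituted in, your sketch becomes the standard proof.
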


\subsection{Legendrian loops from plabic fences}\label{sub: LoopsFromFences}

In this subsection, we explain how to geometrically realize Kaufman and Greenberg's presentation of cluster modular groups for affine type. We introduce plabic fences as a combinatorial means of obtaining a sequence of mutations induced by a Legendrian loop. We then describe fronts with initial quivers that are mutation equivalent to $T_{\bf{n}}$ quivers and Legendrian loops that induce automorphisms conjugate to the $\tau_i$ generators of $\Gamma_\tau$.

\subsubsection{Plabic fences} 

Due to the variety of weave equivalence moves, it is sometimes difficult to determine a sequence of mutations that induces the same cluster automorphism as a Legendrian loop. Even when we can compute the sequence of mutations via other combinatorial means, it can be challenging to show that the mutation sequence in the weave agrees with the Legendrian loops. For example, see Appendix \ref{sec: appendix} for a somewhat involved computation of a short mutation sequence induced by a Legendrian loop. Therefore, in order to determine a sequence of mutations corresponding to the induced action of most of the Legendrian loops we consider, we require a combinatorial way to relate quivers to weaves. In the case of $\La(A_n)$, this can be done by understanding the K\'alm\'an loop as rotations of triangulations dual to 2-weaves, as in \cite{Hughes2021b}. For Legendrian links of the form $\La=\La(\beta)$, we rely on the combinatorics of plabic graphs. 

Plabic graphs are a general combinatorial object related to cluster theory and were first studied by Postnikov \cite{postnikov2006total}. The term `plabic fence' that we use for the family graphs we work with was coined in \cite{FominPylyavskyyShustinThurston} and refers to a particular form of plabic graph, as we describe below. We follow the recipe of \cite{CasalsWeng} for producing a Legendrian link from a plabic fence and use the graph to encode the combinatorics of Legendrian loop mutations. 

\begin{definition}
    A plabic fence is a planar graph with univalent or trivalent vertices colored either black or white constructed as follows:
    \begin{enumerate}
        \item Stack $n$ horizontal lines of the same length on top of each other, each starting with a white vertex on the left and ending with a black vertex on the right.
        \item Add vertical edges between adjacent pairs of horizontal lines with trivalent vertices where they meet colored so that each endpoint of the vertical edge is a different color.
    \end{enumerate}

\end{definition}

From the plabic fence $\mathbb{G}$, we can extract a quiver $Q_\mathbb{G}$ via the following process.
    First, assign a mutable vertex to each  face of $\mathbb{G}$. Then, for every edge $e\in \mathbb{G}$ connecting two faces, add an arrow between the corresponding vertices oriented so that the white endpoint of $e$ is to the right of the edge when traveling in the direction of orientation. 

To a plabic fence $\mathbb{G}$, we associate a 
Legendrian link $\La_\mathbb{G}$ and an initial Legendrian weave surface with boundary $\La_\mathbb{G}$, following the recipe from \cite[Sections 2 and 3]{CasalsWeng}. To obtain a positive braid $\beta$ from $\mathbb{G}$ such that $\La_\mathbb{G}\cong\La(\beta)$, first label the horizontal lines of $\mathbb{G}$ from bottom to top with the numbers 1 through $n$ and label the vertical edges between horizontal lines $i$ and $i+1$ by $\sigma_i$. We refer to a vertical edge by the color of its top vertex, so that a vertical edge between lines $i$ and $i+1$ is white if the vertex on line $i$ is colored white. In the front projection, one should think of the black vertical edges as denoting crossings appearing above cusps, while white vertical edges correspond to crossings below cusps. More precisely, from $\mathbb{G},$ we obtain braid words $\beta_1$ from white vertical edges and $\beta_2$ from black vertical edges by scanning $\mathbb{G}$ from left to right and appending a $\sigma_i$ (resp. $n-i$) to $\beta_1$ (resp. $\beta_2$) for every white (resp. black) vertical edge between lines $i$ and $i+1$. The Legendrian link is Legendrian isotopic to the rainbow closure of the braid $\beta_1\beta_2^\circ$ (equivalently $\beta_2^\circ \beta_1$) where $\beta_2^\circ$ is obtained from $\beta_2$ by reading the braid word from right to left and replacing $\sigma_i$ with $\sigma_{n-i}$ for all $i.$

 \begin{center}
		\begin{figure}[h!]{ \includegraphics[width=.3\textwidth]{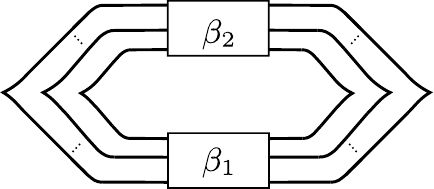}}\caption{Legendrian link obtained from a plabic fence. The braid word $\beta_1$ corresponds to white vertical edges, while the braid word $\beta_2$ corresponds to black.}
			\label{fig: PlabicFenceLink}\end{figure}
	\end{center}

From $\La_{\mathbb{G}}$, Casals and Weng construct an initial weave $\w(\mathbb{G})$ \cite[Definition 3.24]{CasalsWeng}. By \cite[Theorem 1.2]{CasalsLi}, the combinatorial data of the seed corresponding to the plabic graph $\mathbb{G}$ agrees with this choice of initial weave $\w(\mathbb{G})$. More precisely, the Legendrian $\La_{\mathbb{G}}$ given here is Legendrian isotopic to the Legendrian obtained as the conormal lift of zig-zag strands and the conjugate Lagrangian surface associated to an initial seed is Hamiltonian isotopic to the Lagrangian projection of the initial weave. As a result, the intersection quiver of $\w(\mathbb{G})$ agrees with the initial quiver $Q_\mathbb{G}$ coming from the plabic fence.

\subsubsection{Mutations induced by elementary Legendrian isotopies}
In order to compute the sequence of mutations induced by Legendrian loops we decompose our Legendrian loops into a series of simple Legendrian isotopies and describe how to combinatorially realize them in the plabic fence. The first Legendrian isotopy we consider is a Reidemeister III move, which interchanges crossings $\sigma_i\sigma_{i+1}\sigma_i$ with $\sigma_{i+1}\sigma_{i}\sigma_{i+1}$. The plabic fences $\mathbb{G}_\beta$ and $\mathbb{G}_{\beta'}$ corresponding to the two braid words $\beta$ and $\beta'$ then differ by the local move pictured in Figure \ref{fig:plabic braid move}. Combinatorially, we can see that $Q_{\mathbb{G}_\beta}$ differs from $Q_{\mathbb{G}_{\beta'}}$ by a mutation at the vertex corresponding to the face. This elementary Legendrian isotopy also induces a map between $\FM(\La(\mathbb{G}_\beta), T)$ and $\FM(\La(\mathbb{G}_{\beta'}), T)$ that is an isomorphism, but generally not an automorphism. The following lemma states that this induced isomorphism agrees with cluster mutation.

\begin{figure}
    \centering
    \includegraphics{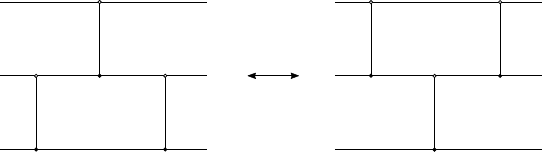}
    \caption{Local move of a plabic fence corresponding to the braid move exchanging $\sigma_i\sigma_{i+1}\sigma_i$ and $\sigma_{i+1}\sigma_i\sigma_{i+1}$.}
    \label{fig:plabic braid move}
\end{figure}

\begin{lemma}[Proposition 7.16, \cite{CLSBW23}]\label{lemma: plabic mutation}
    Let $\mathbb{G}_\beta$ and $\mathbb{G}_{\beta'}$ be plabic fences related by the local move pictured in Figure \ref{fig:plabic braid move} and denote the quiver vertex corresponding to the unique face of $\mathbb{G}$ by $k$. The initial seeds of $\FM(\La(\mathbb{G}_\beta), T)$ and $\FM(\La(\mathbb{G}_{\beta'}), T)$  
    are related by mutation at the cluster variable $a_k$.
\end{lemma}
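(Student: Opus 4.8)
The plan is to reduce the statement to a purely local computation of microlocal merodromies near the face of $\mathbb{G}$, and then to realize the exchange relation as a single three-term Plücker identity among the flags recording the sheaf.

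First I would exploit the locality of the move in Figure \ref{fig:plabic braid move}. The fences $\mathbb{G}_\beta$ and $\mathbb{G}_{\beta'}$ agree outside a disk $D$ containing the unique face, and correspondingly the initial weaves $\w(\mathbb{G}_\beta)$ and $\w(\mathbb{G}_{\beta'})$ agree outside a neighborhood of a single hexavalent vertex, namely the $A_1^3$ singularity recording the Reidemeister III move $\sigma_i\sigma_{i+1}\sigma_i \leftrightarrow \sigma_{i+1}\sigma_i\sigma_{i+1}$. Since this move is a Legendrian isotopy from $\La(\mathbb{G}_\beta)$ to $\La(\mathbb{G}_{\beta'})$, the invariance of $\SM_1$ under Legendrian isotopy induces an isomorphism $\FM(\La(\mathbb{G}_\beta), T) \xrightarrow{\sim} \FM(\La(\mathbb{G}_{\beta'}), T)$ of decorated moduli. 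It then remains only to compare the two initial cluster seeds under this identification.

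The quiver side is already settled: as recorded just before the lemma, $Q_{\mathbb{G}_\beta}$ and $Q_{\mathbb{G}_{\beta'}}$ differ by quiver mutation at $k$, so the exchange matrices transform correctly. On the coordinate side, each mutable variable $a_i = A_{\gamma_i^\vee}$ is the microlocal merodromy along a relative cycle $\gamma_i^\vee$ dual to an $\mathbb{L}$-compressing cycle $\gamma_i$. For $i \neq k$ one may choose representatives of $\gamma_i^\vee$ disjoint from $D$, so these merodromies are literally unchanged by the local isotopy; thus $a_i' = a_i$ for all $i \neq k$, exactly as $\mathcal{A}$-mutation demands. All the content therefore concentrates in the single identity $a_k a_k' = \prod_{b_{ik}>0} a_i^{b_{ik}} + \prod_{b_{ik}<0} a_i^{-b_{ik}}$. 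To establish it I would pass to the flag-moduli description of $\SM_1$: within $D$ the sheaf is recorded by the flags attached to the regions meeting the face, and the cycle dual to $\gamma_k$ before and after the move is the short $\mathsf{I}$- or $\mathsf{Y}$-cycle of the two configurations in Figure \ref{fig:flags}. Writing $a_k$ and $a_k'$ as the corresponding compositions of the wedge ratios \ref{eq: merodromy1} and \ref{eq: merodromy2}, their product $a_k a_k'$ becomes a ratio of wedge products, i.e.\ of Plücker minors of the ambient flags; a three-term Plücker relation of the form $\langle \cdot \wedge \cdot\rangle\langle \cdot \wedge \cdot \rangle = \langle \cdot \wedge \cdot \rangle \langle \cdot \wedge \cdot \rangle + \langle \cdot \wedge \cdot \rangle \langle \cdot \wedge \cdot \rangle$ then exhibits $a_k a_k'$ as a sum of two monomials in the surrounding merodromies, the exponents being read off from the arrows incident to $k$.

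The main obstacle is the bookkeeping in this last step. One must track the decorations $\tilde\phi_i$, the orientations of the transport curves $\eta, \eta'$, and the sign curves suppressed in the merodromy formulas, so that the Plücker relation produces \emph{exactly} the two monomials $\prod_{b_{ik}>0} a_i^{b_{ik}}$ and $\prod_{b_{ik}<0} a_i^{-b_{ik}}$ with the correct exponents $b_{ik}$, rather than a relation off by a unit or a stray frozen factor. Organizing the computation so that the frozen data (marked-point trivializations and non-$\mathbb{L}$-compressing boundary cycles) appears symmetrically on both sides, and carrying out the $\mathsf{Y}$-cycle case (triple ratio) in parallel with the $\mathsf{I}$-cycle case (cross ratio), is where the genuine work lies; the underlying algebraic identity is itself the elementary Plücker relation.
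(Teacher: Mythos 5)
The paper does not actually prove this lemma: it is imported wholesale as \cite[Proposition 7.16]{CLSBW23}, so there is no in-paper argument to compare yours against. That said, your sketch follows the same strategy as the cited proof (and as the $\mathcal{A}$-mutation computation in \cite[Section 4.7]{CasalsWeng} on which it is modeled): use GKS invariance of the decorated moduli under the Reidemeister III isotopy, observe that everything is local to the face, and realize the exchange relation as a three-term Pl\"ucker identity among the flags. Two points deserve more care than you give them. First, for $i\neq k$ with $b_{ik}\neq 0$ the $\mathbb{L}$-compressing cycles $\gamma_i$ themselves are \emph{not} unchanged by the mutation (they pick up copies of $\gamma_k$), and their dual relative cycles need not admit representatives disjoint from the disk $D$; the correct justification for $a_i'=a_i$ is that the dual basis of $H_1(L\setminus T,\La\setminus T)$ transforms contragradiently, so the classes $\gamma_i^\vee$ for $i\neq k$ are literally unchanged, and the merodromy along an unchanged class of an unchanged sheaf (outside $D$) is the same function. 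Second, you explicitly defer the Pl\"ucker bookkeeping — decorations, orientations of the transport curves, sign curves, and the frozen contributions — and that computation is where all the content of the lemma lives; as written your argument is a correct outline of the right proof rather than a proof. Since the paper's role for this statement is only to quote it, citing \cite{CLSBW23} as the paper does, or reproducing the local wedge-ratio computation in full, are the two acceptable ways to discharge it.
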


In addition to the Reidemeister III move, we also consider a sequence of isotopies given by the cyclic shift, which modifies the braid word by conjugation. In a plabic fence with all white vertical edges, we can interpret this isotopy as flipping the leftmost white edge to black, moving it past all of the white vertical edges in the row to its right, and then flipping it back to white. Each time we move a black edge past a white one, we perform the local move pictured in Figure \ref{fig:plabic square move}, known as a square move. As with the Reidemeister III move, this local move induces an isomorphism between sheaf moduli that corresponds to a cluster mutation between the initial seeds.

\begin{figure}
    \centering
    \includegraphics{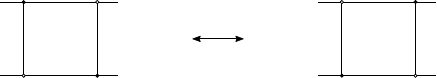}
    \caption{A square move in a plabic fence, corresponding to mutation at the vertex represented by the face.}
    \label{fig:plabic square move}
\end{figure}

\begin{lemma}\cite[Section 5.3]{CasalsWeng}\label{lemma: square move}
    Let $\mathbb{G}_\beta$ and $\mathbb{G}_{\beta'}$ be plabic fences related by the local move pictured in Figure \ref{fig:plabic braid move} and denote the quiver vertex corresponding to the unique face of $\mathbb{G}$ by $k$. The initial seeds of $\FM(\La(\mathbb{G}_\beta), T)$ and $\FM(\La(\mathbb{G}_{\beta'}), T)$ are related by mutation at the cluster variable $a_k$.
\end{lemma}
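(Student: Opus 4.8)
The plan is to run the argument in close parallel with the Reidemeister~III case of Lemma~\ref{lemma: plabic mutation}, exploiting the fact that both the square move and the cluster coordinates are local. (The relevant local move is the square move of Figure~\ref{fig:plabic square move}, performed when a black vertical edge is dragged past a white one during a step of the cyclic shift.) This move alters the fence only inside a disk $D$ containing the single mutable face $k$ together with its neighboring faces; outside $D$ the fences $\mathbb{G}_\beta$ and $\mathbb{G}_{\beta'}$, and with them the Casals--Weng initial weaves $\w(\mathbb{G}_\beta)$ and $\w(\mathbb{G}_{\beta'})$, agree verbatim. Since dragging a black edge past a white one realizes a Legendrian isotopy of the associated links, the invariance of $\SM_1$ under Legendrian isotopy supplies an isomorphism $\FM(\La(\mathbb{G}_\beta),T)\cong\FM(\La(\mathbb{G}_{\beta'}),T)$, so that the problem reduces to comparing the two initial seeds inside a single moduli space.

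First I would dispose of the combinatorial claim that $Q_{\mathbb{G}_\beta}$ and $Q_{\mathbb{G}_{\beta'}}$ are related by quiver mutation at the vertex $k$. This is a direct inspection of the fence-to-quiver recipe: the face $k$ and its incident edges determine the local arrow pattern, and recoloring the four trivalent vertices of the square reverses every arrow at $k$, installs the short-cut arrows between the neighbors of $k$, and creates exactly the oriented two-cycles that are subsequently cancelled, reproducing $\mu_k$. This is consistent with the theorem of \cite{CasalsZaslow} relating Legendrian mutation of the short cycle supported on the face to quiver mutation of the intersection quiver.

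The substance of the lemma lies in the coordinate comparison. Because each microlocal merodromy $A_{\gamma^\vee}$ is computed locally along its relative cycle, every cluster variable $a_i$ with $i\neq k$ can be represented by a merodromy avoiding $D$ and hence restricts to the same regular function on the common moduli; only the variable $a_k$, whose cycle threads the modified face, changes, to $a_k'$. I would therefore write the local flag configuration around $k$ explicitly in both fences, lift the framings to decorations $\tilde{\phi}$ as in Section~\ref{sec: sheaves}, and evaluate $a_k$ and $a_k'$ by composing the parallel-transport formulas \ref{eq: merodromy1} and \ref{eq: merodromy2}; each is then a ratio of wedge products of the surrounding decorations. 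The exchange relation $a_k a_k'=\prod_{b_{ik}>0}a_i^{b_{ik}}+\prod_{b_{ik}<0}a_i^{-b_{ik}}$ should reduce to a three-term Grassmann--Pl\"ucker identity among these wedge products, with the two monomials on the right corresponding to the two diagonals of the square and the exponents read off from the incoming versus outgoing arrows at $k$.

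I expect this last point to be the main obstacle: coaxing the Pl\"ucker identity into a genuine sum (rather than a sign-ambiguous one) with the correct exponents $b_{ik}$ requires careful bookkeeping of orientations, of the chosen framing lifts, and in particular of the sign curves of \cite[Section~4.5]{CasalsWeng} that I have been suppressing throughout. The cleanest way to control these, and the route I would ultimately take, is to recognize the square move as a Legendrian mutation at the $\mathbb{L}$-compressing cycle dual to the face $k$ and to invoke the Casals--Weng dictionary that Legendrian mutation of such a cycle induces a cluster-$\mathcal{A}$ mutation on toric charts; the explicit merodromy computation then needs only to certify that the mutation occurs at $a_k$ and obeys the stated exchange relation, rather than to reprove the correspondence from scratch.
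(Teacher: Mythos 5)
The paper does not give its own proof of this lemma: it is imported verbatim as a citation to \cite[Section 5.3]{CasalsWeng}, exactly as the parallel Lemma \ref{lemma: plabic mutation} is imported from \cite{CLSBW23}. Your sketch — locality of the move and of the microlocal merodromies, the quiver-mutation check at the face $k$, the reduction of the exchange relation to a Pl\"ucker identity, and the final appeal to the Casals--Weng dictionary identifying the square move with Legendrian mutation at the $\mathbb{L}$-compressing cycle of the face — is precisely the argument underlying the cited result, so there is nothing in the paper for it to diverge from; the only caveat is that your closing step quietly turns the proof back into the citation the paper already makes.
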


\subsubsection{Legendrian loop mutation sequences}\label{sub: Mutation sequences}

As noted above, an elementary Legendrian isotopy in general only induces an isomorphism of $\FM(\La, T)$. On the other hand, a Legendrian loop necessarily induces an automorphism of $\FM(\La, T)$. The automorphism induced by a Legendrian $\varphi$ preserves intersection quivers and extends to an action on cluster charts by considering the concatenation of the trace of $\varphi$ with fillings of $\La$. As a result, the automorphism induced by a Legendrian loop is a \emph{cluster} automorphism. Following the argument in the previous subsection, the sequence of mutations giving the specific cluster automorphism induced by $\varphi$ can be computed by decomposing $\varphi$ into elementary Legendrian isotopies.

For Legendrian satellite loops, we can obtain an explicit description of the induced sequence of mutations. Consider $\La=\La(\beta\Delta_n^2, i, \sigma_1^k)$ with $k\geq 3$. Here $\beta\in \Br_n^+$, $\Delta_n^2$ is a full twist on $n$ strands, and $\sigma_1^k\in \Br_2^+$. As detailed in Section \ref{sub: loops defs}, we obtain a Legendrian $\vartheta$-loop by rotating a single crossing of the satellited braid $\sigma_1^k$ following the pair of satellited strands in the contact neighborhood of the $i$th strand of the $(-1)$-framed closure of $\beta$. Note that since $k\geq 3$, the link $\La(\beta\Delta_n^2, i, \sigma_1^k)\cong \La(\beta')$ for some braid $\beta'\in \Br_{n+1}^+$. Therefore, there is some grid plabic graph $\mathbb{G}(\beta')$ such that $\La_{\mathbb{G}(\beta')}\cong \La(\beta\Delta_n^2, i, \sigma_1^k)$.

By construction, every smallest subword of the form $\sigma_j\dots \sigma_j$ of $\La_\mathbb{G}$ yields a face of $\mathbb{G}$. For $\La(\beta')\cong\La(\beta\Delta_n^2, i, \sigma_1^k)$, certain faces of the plabic graph $\mathbb{G}(\beta')$ correspond to the satellited braid $\gamma=\sigma_1^k$. These faces can be identified from $\beta$ and $i$ by describing the relationship between $\beta$ and $\beta'$: 
 Let $\La$ denote the $(-1)$-framed closure of $\beta\Delta_n^2$. For any crossings $\sigma_j$ appearing above the $i$th strand $\La_i$ of $\La$, we replace $\sigma_j$ by $\sigma_{j+1}$ in $\beta'$, as the appearance of the additional satellited strand shifts the strands above it up by one. Similarly, any crossings appearing below $\La_i$ are fixed. If $\La_i$ appears at height $j$ at some point in the subword, then for any crossing $\sigma_{j}$ involving $\La_i$, we replace $\sigma_j$ by $\sigma_{j}\sigma_{j+1}$. Under these conditions, we also replace $\sigma_{j-1}$ by $\sigma_{j-1}\sigma_{j}$.  
 
Applying the above observation to the grid plabic graph $\mathbb{G}$ allows us to understand the faces of $\mathbb{G}$ corresponding to $\gamma$ by analyzing the braid word. In particular, for any two consecutive crossings of $\beta$ involving the $i$th strand the corresponding subword of the satellite braid $\beta'$ is given as follows:
\begin{itemize}
    \item $\sigma_j\sigma_{j+1}\mapsto \sigma_{j+1}\sigma_j\sigma_{j+2}\sigma_{j+1}$
    \item $\sigma_{j-1}\sigma_{j}\mapsto \sigma_{j-1}\sigma_j\sigma_{j-2}\sigma_{j-1}$
    \item $\sigma_j\sigma_{j}\mapsto \sigma_{j+1}\sigma_j\sigma_{j}\sigma_{j+1}$
    \item $\sigma_{j-1}\sigma_{j-1}\mapsto \sigma_{j}\sigma_{j+1}\sigma_{j+1}\sigma_{j}$
\end{itemize}
As a result, we see that for any pair of crossings of $\beta$ involving $\La_i$, we have a distinguished face of the plabic fence $\mathbb{G}(\beta')$ that can be identified with crossings of the satellited braid $\La(\sigma_1^k)_i\subseteq \La(\beta\Delta_n^2, i, \sigma_1^k)$.

Denote the subset of the quiver corresponding to these faces by $Q_\gamma$ and label the vertices from left to right by $\{v_{\gamma_1}, \dots, v_{\gamma_p}\}$.  Denote also by $Q_i$ the subset of the quiver corresponding to the faces in the $i$th row of $\mathbb{G}$ corresponding to pairs of crossings $\sigma_i \dots \sigma_i$ with vertices $\{v_{i_1}, \dots, v_{i_q} \}$. Note that $Q_\gamma$ and $Q_i$ necessarily have nonempty intersection so that some of the vertex labeling is redundant. We denote the vertex labels in $Q_\gamma \cap Q_i$ by $\{i_{r_j}=\gamma_{r_j}\}_{j=1}^m$ From the above description we can extract a combinatorial recipe for the action of the Legendrian $\vartheta$-loop.

\begin{lemma}\label{lemma: satellite loop mutations}
 Let $\La=\La(\beta\Delta_n^2, i, \sigma_1^k)$ for $k\geq 3$. The $\vartheta$-loop described above induces the sequence of mutations  $$\gamma_p, \gamma_{p-1}, \dots, \gamma_1, \gamma_1, \pi_{\gamma}^{-1}(i_1), \pi_{\gamma}^{-1}(i_2), \dots, \pi_{\gamma}^{-1}(i_q)$$
 and the permutation of quiver vertex labels is given in cycle notation by $$(i_1\, \dots \, i_{r_{1}} \, \gamma_{r_{1}-1} \, \dots \, \gamma_{1})\dots (i_{r_{m-1}+1}\, \dots \, i_{r_{m}} \, \gamma_{r_{m}-1} \, \dots \, \gamma_{r_{m-1}+1}).$$ 
\end{lemma}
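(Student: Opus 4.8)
The plan is to realize the $\vartheta$-loop as an explicit concatenation of elementary Legendrian isotopies and to convert each one into a quiver mutation using Lemmas \ref{lemma: plabic mutation} and \ref{lemma: square move}. Recall that the $\vartheta$-loop transports one of the $k$ crossings of the satellite braid $\sigma_1^k$ once around the companion unknot $\La_i$, following the $i$th strand of the $(-1)$-framed closure of $\beta\Delta_n^2$; see Figure \ref{fig:LoopEx}. First I would decompose this contact isotopy, supported in $\mbox{Op}(\La_i)$, into three successive stages: (a) the chosen crossing is transported leftward through the faces of $\mathbb{G}(\beta')$ associated to the satellite braid, i.e. the faces indexing $Q_\gamma$, encountered from right to left; (b) a cyclic rotation carries the crossing past the left cusp and back around the closure; (c) the crossing is transported through the crossings of $\beta$ involving $\La_i$, i.e. the faces indexing $Q_i$, encountered from left to right. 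Each elementary piece is either a Reidemeister III move or a cyclic shift.

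Next I would use the braid-word-to-plabic-fence dictionary together with the four substitution rules recorded above to identify, for each elementary isotopy, the unique face of $\mathbb{G}(\beta')$ at which it is supported. By Lemma \ref{lemma: plabic mutation} (for the Reidemeister III moves, Figure \ref{fig:plabic braid move}) and Lemma \ref{lemma: square move} (for the cyclic shifts, Figure \ref{fig:plabic square move}), each such move induces a cluster mutation at the vertex indexing that face. Reading off these faces in the order the traveling crossing meets them yields the asserted mutation word: stage (a) produces $\gamma_p, \gamma_{p-1}, \dots, \gamma_1$, stage (b) contributes the repeated $\gamma_1$ coming from the wrap-around past the cusp, and stage (c) produces the row-$i$ mutations. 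The substitution rules guarantee that each pair of consecutive $\La_i$-crossings of $\beta$ yields exactly one distinguished face of $Q_\gamma$, which is precisely what fixes the identification of the shared vertices $Q_\gamma\cap Q_i=\{i_{r_j}=\gamma_{r_j}\}$ and governs the bookkeeping between the two passes.

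The remaining point is to account for relabeling. Since stages (a)--(b) physically drag faces along, the identification of faces with the initial vertex labels is shifted by the permutation $\pi_\gamma$ induced by the satellite pass; to express the stage-(c) mutations back in the original labeling I would conjugate by $\pi_\gamma$, which is exactly why they appear as $\pi_\gamma^{-1}(i_1), \dots, \pi_\gamma^{-1}(i_q)$. Finally, because the loop returns the front to its starting configuration, the composite induces a genuine cluster automorphism of $\FM(\La)$, and the net permutation of vertex labels is read off by tracking which label each face carries after the full transport. Within each block delimited by consecutive shared vertices, the transport cyclically advances the row-$i$ labels together with the intervening satellite labels, giving the single cycle $(i_{r_{j-1}+1}\, \dots \, i_{r_j}\, \gamma_{r_j-1}\, \dots\, \gamma_{r_{j-1}+1})$; taking the product over all blocks yields the stated permutation.

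I expect the main obstacle to be the label bookkeeping in the last step: because the shared faces $Q_\gamma\cap Q_i$ are visited both in the satellite pass and in the row-$i$ pass, one must be careful that the order of mutations and the cycle decomposition are mutually consistent. I would control this by fixing a reference labeling at the start of the loop, tracking the label attached to the traveling crossing throughout all three stages, and reconciling the two passes at the wrap-around step (b). The appearance of the doubled $\gamma_1$ in the mutation word is precisely the combinatorial signature of this reconciliation, and matching it against the boundary case of the substitution rules is the delicate verification that completes the proof.
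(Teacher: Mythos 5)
Your proposal is correct and follows essentially the same route as the paper: decompose the $\vartheta$-loop into Reidemeister III moves followed by a cyclic rotation, convert each elementary move into a mutation at the corresponding face of $\mathbb{G}(\beta')$ via Lemmas \ref{lemma: plabic mutation} and \ref{lemma: square move}, and track the vertex relabeling through $\pi_\gamma$ to obtain both the mutation word and the cycle decomposition. The only cosmetic difference is that the paper attributes the doubled $\gamma_1$ to the first square move of the cyclic rotation (the leftmost face of row $i$ carries the label $v_{\gamma_1}$ after the Reidemeister III pass), whereas you package it as a separate wrap-around stage, but the content is identical.
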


\begin{proof}
For $\La(\beta\Delta_n^2, i, \sigma_1^k)$, the $\vartheta$-loop is given by a sequence of Reidemeister III moves and a cyclic rotation that pass a single crossing $c$ of $\La(\sigma_1^k)_i$ around the braid. The first Reidemeister III move passes the crossing $c$ through the rightmost strand passing over or under $\La(\sigma_1^k)_i$, forming a face of $\mathbb{G}$. Each successive Reidemeister III move passes the crossing $c$ across a subsequent strand of $\La(\beta\Delta_n^2, i, \sigma_1^k)$. By Lemma \ref{lemma: plabic mutation}, this Reidemeister III move corresponds to a mutation of the quiver $Q_{\mathbb{G}}$ identified with the particular face of $\mathbb{G}$ formed by the strand and the crossing $c$, and yields a new plabic graph $\mathbb{G}'$. By construction we can readily identify the vertices belonging to $Q_\gamma$ as a subset of $Q_{\mathbb{G}'}$. 

After performing all of the Reidemeister III moves, the mutation sequence up to this point is precisely $\gamma_p, \dots, \gamma_1$. The resulting plabic fence is identical to $\mathbb{G}$ except that the vertical white edge representing the crossing $c$ is now the leftmost edge of the $i$th row of the plabic fence. Following the sequence of mutations, we see that the corresponding face is labeled by the vertex $v_{\gamma_1}$. To complete the $\vartheta$-loop, we perform a cyclic rotation by flipping the leftmost white edge corresponding to $c$ to a black edge and performing a sequence of square moves that push it as far to the right as possible. By Lemma \ref{lemma: square move}, these correspond to mutations of each vertex in row $i$, starting with the vertex $v_{\gamma_1}$ of $Q_\gamma$ and then proceeding in order through the vertices of in row $i$. For any vertex $r_j$ in $Q_i\cap Q_\gamma$, the vertex in row $i$ after the mutations $\gamma_p, \dots, \gamma_1$ is $\pi_\gamma^{-1}(v_{r_j})$ while any other vertex in row $i$ is fixed by $\gamma_p, \dots, \gamma_1$ The resulting mutation sequence is therefore given by $\gamma_p, \dots, \gamma_1, \gamma_1, \pi_\gamma^{-1}(i_1), \dots, \pi_\gamma^{-1}(i_q)$, as desired.

The permutation of quiver vertex labels resulting from this sequence of mutations follows from the plabic fence computation. In particular, each Reidemeister III move inducing a mutation in the sequence $\gamma_p, \dots, \gamma_1$ leads to replacing the quiver vertex label $v_{\gamma_{j+1}}$ by the quiver vertex label $v_{\gamma_j}$. The sequence of mutations $\pi_\gamma^{-1}(i_1),\dots, \pi_\gamma^{-1}(i_q)$ induces a similar relabeling, with the additional caveat that we need to account for the nonempty intersection between $Q_i$ and $Q_\gamma$. As a result, each cycle in our permutation corresponds to a single vertex $v_{r_j}$ in $Q_i\cap Q_\gamma$ and is given by shifting the vertices in $Q_i$ between $v_{r_j}$ and $v_{r_{j-}}$ to the right by one and then shifting the vertices in $Q_\gamma$ between $v_{r_j}$ and $v_{r_{j-1}}$ to the left by one. Note that we include $v_{r_j}$ but not $v_{r_{j-1}}$ in this particular cycle of the permutation. 
\end{proof}

\begin{ex}
Consider the positive braid $\beta=\sigma_1^2\sigma_3\sigma_2\sigma_3^2\sigma_2\sigma_1^2\Delta_4^2\in \Br_4^+$. The Legendrian link $\La(\beta, 2, \sigma_1^4)$ is isotopic to $\La(\sigma_2\sigma_1^2\sigma_2\sigma_4\sigma_3\sigma_2\sigma_4\sigma_3^2\sigma_4\sigma_2\sigma_3\sigma_1\sigma_2^2\sigma_1\sigma_2^2)$ with a corresponding plabic fence $\mathbb{G}_0$, pictured in Figure \ref{fig: PlabicFenceExample}. If we label the faces from left to right as shown, we have that $Q_i=\{v_2, v_3, v_6, v_{7}, v_{9}, v_{10}, v_{11}\}$, while $Q_\gamma=\{v_1, v_{3}, v_5, v_7, v_{8}, v_{10}\}$. The mutation sequence induced by the $\vartheta$ loop is then given by $$\tilde{\vartheta}=(10, 8, 7, 5, 3, 1, 1, 2, 5, 6, 8, 9, 11; (2\, 3\, 1)(6\, 7\, 5)(9\, 10\, 8)$$
\end{ex}

\begin{figure}
    \centering
\includegraphics[width=.7\textwidth]{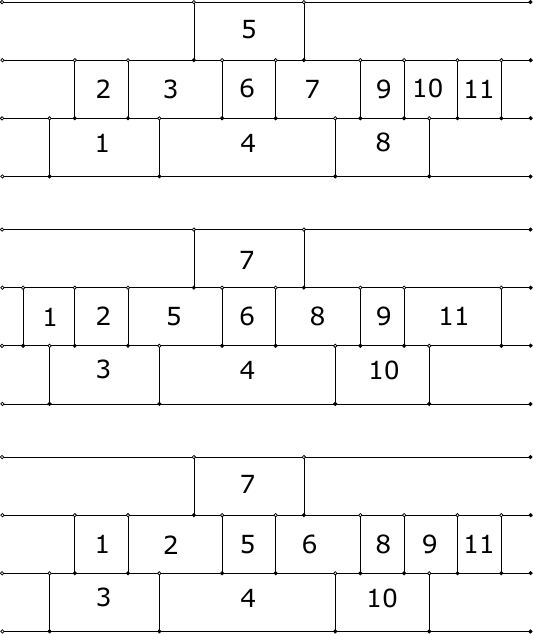}
    \caption{A sequence of plabic fences depicting the mutation sequence of the Legendrian $\vartheta$-loop. Starting with the initial plabic (top), we obtain the result of performing the Reidemeister III moves inducing the mutation sequence $\gamma_p, \dots, \gamma_1$ (middle). Applying a cyclic rotation results in the plabic fence depicted at the bottom. }
    \label{fig: PlabicFenceExample}
\end{figure}

Finally, we describe the sequence of mutations associated to the $\DT$ transformation in the case of Legendrian links $\La(\beta)$. This description comes from giving a recipe for realizing $\DT$ in terms of the combinatorics of plabic fences, following \cite[Section 5]{CasalsWeng}. In the Legendrian, this procedure is roughly described as rotating all of the crossings past the cusps and then composing with the strict contactomorphism $x\mapsto -x, z\mapsto -z$; see Subsection \ref{sub: loops defs}. Starting with a plabic fence of all white edges, the Legendrian isotopy can be combinatorially realized by repeatedly flipping the rightmost white edge to black and moving it past the remaining white edges in its row using square moves until it becomes the rightmost black edge. Once all white edges are flipped to black, the contactomorphism then takes all black edges to white edges, returning to the initial plabic fence. Recording the square moves involved in this process gives an explicit mutation sequence.

\section{Legendrian loops as generators of cluster modular groups}\label{sec: CMGs}

In this section, we describe presentations for cluster modular groups of of skew symmetric cluster algebras of finite, affine, and extended affine Dynkin type and match them with Legendrian loop actions. We start by describing the quiver combinatorics used in \cite{KaufmanGreenberg} to give presentations for cluster modular groups of affine and extended affine types. We then compare the induced action of our Legendrian loops with the presentations of cluster modular groups given below. Following our discussion of cluster modular groups for Dynkin types, we also include a similar computation for cluster automorphisms arising from Legendrian loops of torus links.

\subsection{Presentations of cluster modular groups via $T_{\bf n}$ quivers}\label{sub: TnQuivers}

In \cite{KaufmanGreenberg}, Kaufman and Greenberg use a particular family of quivers to explicitly describe the cluster modular groups of affine and extended affine cluster algebras.

We make use of their work by showing that the initial quivers from plabic fences for affine and extended affine type 
yield nearly identical combinatorial presentations. We begin by defining Kaufman and Greenberg's quivers for simply-laced type.
\begin{definition}\label{def: TnQuiver}
    Given a vector ${\bf n}=(n_1, \dots, n_k)$ of natural numbers $n_i\geq 2$, a $T_{\bf n}$ quiver is a quiver with a pair of special vertices $v_1$ and $v_0$ and a collection of $k$ `tails' of vertices of length $n_1, \dots, n_k$, as pictured in Figure \ref{fig:T_nQuiver}.\footnote{The quiver pictured in Figure \ref{fig:T_nQuiver} is actually opposite to the one considered in \cite{KaufmanGreenberg}. The pictured orientation is chosen to match our previous conventions.} 
\end{definition}

\begin{figure}
    \centering
    \includegraphics{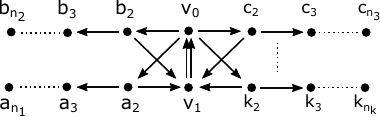}
    \caption{A $T_{n_1, \dots n_k}$ quiver.  Deleting the vertex $v_0$ results in a quiver with central vertex $v_1$ and tails of length $n_i-1$ (not including the vertex $v_1$) where the vertices alternate as either sources or sinks for the two incident edges.}
    \label{fig:T_nQuiver}
\end{figure}

In this work, we will always consider the case of $k=3$. The particular cluster types we consider are listed in  Table \ref{tab: generating} along with the corresponding $T_{\bf n}$ quiver. $T_{\bf{n}}$ quivers admit a particular class of quiver automorphisms, which we will denote $\tau_1, \dots, \tau_k$. Recall that we denote a cluster automorphism by a tuple with the first entry a sequence of quiver mutations and the second entry a permutation describing the relabeling of the quiver vertices. For a tail of length $n_i$ in $T_{\bf n}$, we denote $i_{\text{odd}}=\{i_j | 3\leq j \leq n_i, j \text{ odd}\}$ and $i_{\text{even}}\{i_j | 3\leq j \leq n_i, j \text{ even}\}.$ The automorphism $\tau_i$ is then given by 
$$\tau_i=(\mu_{i_{\text{odd}}}\mu_{i_{\text{even}}} \mu_{i_{2}}\mu_{v_0}\mu_{v_1}; (i_2 \,i_0\, i_1))$$
where $\mu_{i_{\text{odd}}}$ denotes a sequence of mutations starting at $i_j$ for $j<n_i$ the largest number in $i_{\text{odd}}$ and ending at the vertex $i_3$. The mutation sequence $\mu_{i_{\text{even}}}$ is defined analogously. Denote by $\Gamma_\tau$ the subgroup of $\SG$ generated by $\tau_1, \dots, \tau_k$.

\begin{theorem}[Theorem 4.11, \cite{KaufmanGreenberg}]\label{thm: TauRelation}
    $\Gamma_\tau$ is an abelian group with relations $\tau_i^{n_i}=\tau_j^{n_j}$.
\end{theorem}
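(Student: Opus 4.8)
The plan is to use the fundamental principle, recorded earlier in the excerpt, that a cluster automorphism is determined by its action on a single seed. This turns every proposed identity among the $\tau_i$ into a finite check on the initial $T_{\bf n}$ seed. Writing $A$ for the abelian group presented by $\langle \tau_1,\dots,\tau_k \mid [\tau_i,\tau_j]=1,\ \tau_i^{n_i}=\tau_j^{n_j}\rangle$, the content of the theorem is precisely that the tautological surjection $A\twoheadrightarrow\Gamma_\tau$ sending generators to generators is an isomorphism. Accordingly, I would split the argument into an \emph{upper bound} (the defining relations of $A$ genuinely hold in $\SG$, yielding the surjection) and a \emph{lower bound} (there are no further relations, yielding injectivity).

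For commutativity I would compute $\tau_i\tau_j$ and $\tau_j\tau_i$ directly on the initial seed. The structural input is the description in the caption of Figure \ref{fig:T_nQuiver}: after deleting $v_0$ the tails meet only at the central vertex $v_1$, and along each tail the vertices alternate between sources and sinks. Hence the only place where the mutation sequences defining $\tau_i$ and $\tau_j$ can interfere is at the two central vertices $v_0,v_1$; since each $\tau_i$ is, apart from its two terminal mutations at $v_0$ and $v_1$, supported entirely on the vertices of its own tail, this interference is localized, and one verifies that the two orders of applying the ``rotate tail $i$'' and ``rotate tail $j$'' operations produce the same relabelled seed.

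For the power relations I would show that $\tau_i^{n_i}$ equals a single distinguished central automorphism $z\in\SG$, independent of $i$, interpreted geometrically as the full rotation that cycles every tail once simultaneously (the transformation that, in the contact picture, is later realized by the full cyclic rotation, with $z$ comparable to $\DT^2$). Concretely, iterating the cyclic shift of a tail of length $n_i$ exactly $n_i$ times restores that tail while advancing the central data by one uniform increment; checking $\tau_i^{n_i}=z$ on the initial seed for one, hence every, $i$ establishes $\tau_i^{n_i}=\tau_j^{n_j}$, and together with commutativity this produces the surjection $A\twoheadrightarrow\Gamma_\tau$.

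The main obstacle is the lower bound: proving $A\twoheadrightarrow\Gamma_\tau$ is injective, i.e.\ that no hidden relations collapse the group. Here I would produce a faithful linear shadow of the action, namely the induced action of $\Gamma_\tau$ on the positive real part $\mathcal{A}_{>0}$ together with its tropicalization (the $g$-vector and $c$-vector dynamics). The central element $z$ acts by an unbounded, monotone rescaling of cluster variables, so it has infinite order, which pins down the rank-one free part of $A$; the residual finite data is then controlled by computing the order of the image of each $\tau_i$ in $\Gamma_\tau/\langle z\rangle$ and matching it against the order of the corresponding generator in $A/\langle z\rangle$. The delicate point is that these positive and tropical invariants must \emph{separate all} elements of $A$, not merely the generators, and this is where a translation-length or Nielsen--Thurston-type argument for cluster automorphisms, in the spirit of Ishibashi's work cited in the introduction, carries the real weight.
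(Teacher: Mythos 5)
First, a point of comparison: the paper does not prove this statement at all --- it is quoted verbatim from Kaufman--Greenberg (their Theorem 4.11) and used as a black box --- so there is no in-paper argument to measure yours against, and your proposal must stand on its own. On its own terms it has real gaps. For the ``upper bound,'' reducing each identity to a finite check on the initial $T_{\bf n}$ seed is the right move, since a cluster automorphism is determined by its image on one seed; but you never perform either check, and your heuristic for why commutativity should be routine is misleading. Both $\tau_i$ and $\tau_j$ mutate at the shared vertices $v_0,v_1$ \emph{and} carry permutation parts $(i_2\,i_0\,i_1)$ and $(j_2\,j_0\,j_1)$ moving those same two vertices; as permutations these do not commute (one composite is $(i_2\,v_1)(j_2\,v_0)$, the other $(i_2\,v_0)(j_2\,v_1)$), and composing the automorphisms also forces the second mutation sequence to be relabelled by the first permutation. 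So $\tau_i\tau_j=\tau_j\tau_i$ can only hold because the interleaved mutation sequences compensate for a genuine discrepancy in the combinatorial data; that compensation is the entire content of the claim and cannot be waved off as a ``localized'' interaction. Likewise $\tau_i^{n_i}=\tau_j^{n_j}$ needs an actual computation: your candidate central element $z$ is never pinned down as a concrete pair (mutation sequence; permutation), and identifying it with something ``comparable to $\DT^2$'' is off --- the paper's own formula gives $\DT$ as $\gamma\cdot\prod_i\tau_i$ with $\gamma:=\tau_i^{n_i}$, not $\gamma=\DT^2$.

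The ``lower bound'' is where the argument genuinely breaks down. You assert that $z$ acts by an unbounded monotone rescaling (hence has infinite order) and that positive/tropical invariants separate \emph{all} elements of $A=\Z^k/\langle n_ie_i-n_je_j\rangle$, but neither is proved, and even if carried out, matching the orders of the \emph{generators} in $\Gamma_\tau/\langle z\rangle$ against those in $A/\langle z\rangle\cong\prod_i\Z/n_i\Z$ would not determine that quotient group. You flag this as the delicate point yourself, which is honest, but it leaves the presentation half of the statement unestablished. Note, however, that the way the paper actually invokes the theorem (e.g.\ deducing that $\langle\tau_1,\tau_2\rangle$ has finite index in $\Gamma_\tau$ from $\tau_3^{n_3}=\tau_1^{n_1}$) only requires that the relations \emph{hold}, not that they constitute a complete presentation; on that weaker reading your entire second half is unnecessary, and what separates you from a proof is carrying out the two finite seed computations you deferred.
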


Using the relation $\tau_i^{n_i}=\tau_j^{n_j}$ from Theorem \ref{thm: TauRelation}, we can define the element $\gamma:=\tau_i^{n_i}$ of $\Gamma_\tau$. Kaufman and Greenberg show that the Donaldson-Thomas transformation can then be given in terms of $\gamma$ and the $\tau_i$ generators by $\DT=\gamma^2\prod_{i=1}^k \tau_i\gamma^{-1}$ \cite[Theorem 4.14]{KaufmanGreenberg}. For the skew-symmetric cluster algebras we investigate here, this simplifies to $\DT=\tau_1\tau_2\tau_3\gamma^{-1}$.

In addition to the $\tau_i$ generators, 
we must also consider graph automorphisms $\sigma\in \Aut(T_{\bf{n}})$ acting on $\Gamma_\tau$ by swapping tails $i$ and $j$ of length $n_i=n_j$. The utility of the $T_{\bf{n}}$ quiver construction is demonstrated by the following theorem.

\begin{theorem}[\cite{KaufmanGreenberg}, Theorems 5.2 and 6.1]
    For a cluster algebra $\mathbb{A}$ of affine type, $\SG(\mathbb{A})\cong \Gamma_\tau \rtimes \Aut(T_{\bf{n}})$. For a cluster algebra $\mathbb{A}$ of extended affine type, $\Gamma_\tau \rtimes \Aut(T_{\bf{n}})$ is isomorphic to a finite index subgroup of $\SG(\mathbb{A})$.
\end{theorem}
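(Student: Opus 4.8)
The plan is to realize $\Gamma_\tau \rtimes \Aut(T_{\bf{n}})$ as an explicit subgroup of $\SG(\mathbb{A})$ and then to compute its index, which should be trivial in the affine case and finite in the extended affine case. First I would check that the semidirect product is well defined inside $\SG(\mathbb{A})$. Theorem 4.11 already supplies that $\Gamma_\tau$ is the abelian group on the cluster automorphisms $\tau_1, \dots, \tau_k$ subject to $\tau_i^{n_i} = \tau_j^{n_j}$, and each graph automorphism $\sigma \in \Aut(T_{\bf{n}})$ is a cluster automorphism given purely by relabeling vertices. It then remains to verify the conjugation relation $\sigma \tau_i \sigma^{-1} = \tau_{\sigma(i)}$; since $\tau_i$ is the fixed tuple $(\mu_{i_{\mathrm{odd}}}\mu_{i_{\mathrm{even}}}\mu_{i_2}\mu_{v_0}\mu_{v_1}; (i_2\, i_0\, i_1))$ and $\sigma$ merely permutes isomorphic tails while fixing $v_0$ and $v_1$, this is a direct computation on the defining mutation sequences.

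For the affine case the core claim is surjectivity. I would use that a cluster automorphism $\varphi$ is determined by the image $\varphi(\Sigma_0)$ of the initial seed, and that this image must be a seed in the mutation class whose quiver is isomorphic to $T_{\bf{n}}$. Thus surjectivity reduces to showing that $\Gamma_\tau \rtimes \Aut(T_{\bf{n}})$ acts transitively on the set $S$ of such $T_{\bf{n}}$-shaped seeds. In affine type the tail lengths sit exactly at the borderline where the central element $\gamma = \tau_i^{n_i}$ has infinite order and behaves as a translation, so that $S / \langle \gamma \rangle$ is finite; the remaining finitely many shapes are then reached by the tail-twists $\tau_i$ and the tail permutations in $\Aut(T_{\bf{n}})$. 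Concretely I would put a distance function on $S$ measured by mutation length from $\Sigma_0$ and argue by induction: any $T_{\bf{n}}$-shaped seed at positive distance can be pushed strictly closer to $\Sigma_0$ by some $\tau_i^{\pm 1}$ or some $\sigma$, using that the only mutations returning a quiver to the shape $T_{\bf{n}}$ near a tail are the ones recorded by $\tau_i$. Transitivity, together with the fact that a nontrivial element of $\SG$ cannot fix a labeled seed, then yields the isomorphism $\SG(\mathbb{A}) \cong \Gamma_\tau \rtimes \Aut(T_{\bf{n}})$.

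The extended affine case runs the same machinery, but now the tail lengths place $T_{\bf{n}}$ beyond the affine borderline, so $\langle \gamma \rangle$ is a finite-index translation-type subgroup and $S$ splits into finitely many $\Gamma_\tau \rtimes \Aut(T_{\bf{n}})$-orbits rather than a single one. I would prove finiteness of the index by exhibiting a finite set of coset representatives: the additional cluster automorphisms in these cases are exactly the braid-group and twist symmetries of the underlying Grassmannian cluster structure studied by Fraser in \cite{fraser2018braid}, and one bounds the number of orbits of $T_{\bf{n}}$-shaped seeds by a finite computation in the relevant rank.

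The hard part is precisely this surjectivity/finite-index step, that is, ruling out ``exotic'' cluster automorphisms beyond the translations $\tau_i$ and the tail permutations. The subtlety is that quiver mutation can return a quiver to the shape $T_{\bf{n}}$ along long and non-obvious mutation paths, so the distance-reduction argument requires a genuinely complete understanding of which seeds in the mutation class recover that shape. The cleanest way to make this rigorous is probably to pass to the cluster category of affine type and identify $\SG(\mathbb{A})$ with the group of autoequivalences modulo those acting trivially on cluster-tilting objects, where the affine structure (the tubes and the transjective component) makes the translation $\gamma$ and the tail symmetries transparent; alternatively one can match the outcome against the affine-type computation of $\SG$ in \cite{AssemSchifflerShramchenko} and use the $\DT$ formula $\DT = \tau_1\tau_2\tau_3\gamma^{-1}$ to pin down the remaining generators.
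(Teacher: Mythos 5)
This statement is not proved in the paper at all: it is imported verbatim from Kaufman--Greenberg (their Theorems 5.2 and 6.1), so there is no in-paper argument to compare yours against. Judged on its own, your outline has the right skeleton --- exhibit $\Gamma_\tau \rtimes \Aut(T_{\bf{n}})$ as a subgroup via the conjugation relation $\sigma\tau_i\sigma^{-1}=\tau_{\sigma(i)}$, then prove surjectivity in affine type and finite index in extended affine type --- and the first step is indeed a routine check on the defining mutation tuples. But the proof as written has a genuine gap exactly where you flag it, and the patch you propose does not close it.

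The gap is the transitivity claim. Your distance-reduction induction rests on the assertion that ``the only mutations returning a quiver to the shape $T_{\bf{n}}$ near a tail are the ones recorded by $\tau_i$,'' but that assertion \emph{is} the theorem: a priori the mutation class contains $T_{\bf{n}}$-shaped seeds reached only by long, non-local mutation paths (each $\tau_i$ is itself such a path of length roughly $n_i$), and nothing in your argument rules out a return path that is not a word in the $\tau_i$ and the tail permutations. Moreover, the distance function you propose is not obviously decreased by applying $\tau_i^{\pm 1}$, since $\tau_i$ moves the seed by many mutation steps at once. Your fallback routes also need care: the cluster-category picture with tubes and a transjective component applies to affine type (and essentially reproduces the Assem--Schiffler--Shramchenko computation, which is how one should finish that case), but it does not apply to the extended affine types $E_6^{(1,1)}, E_7^{(1,1)}, E_8^{(1,1)}$, which are finite mutation type but not surface or affine-categorical; and the appeal to Fraser's Grassmannian braid action covers only $E_7^{(1,1)}\cong\Gr(4,8)$ and $E_8^{(1,1)}\cong\Gr(3,9)$, not $E_6^{(1,1)}$, so ``a finite computation in the relevant rank'' is carrying the entire weight of Theorem 6.1 of Kaufman--Greenberg without being performed. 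To make this rigorous you would have to either reproduce their normal-form analysis of $T_{\bf{n}}$-shaped seeds or explicitly reduce to the known computations of $\SG$ case by case.
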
 
Kaufman and Greenberg also conjecture that $\SG(\mathbb{A})\cong \Gamma_\tau \rtimes \Aut(T_{\bf{n}})$ for cluster algebras $\mathbb{A}$ that admit a $T_{\bf{n}}$ quiver and are not of extended affine type \cite[Conjecture 4.7]{KaufmanGreenberg}.

\subsection{Proof of Theorem \ref{thm: gens}}\label{sub: proof of gens}
 
Let us now proceed to a proof of Theorem \ref{thm: gens}. We start by using Lemma \ref{lemma: satellite loop mutations} to compare the induced action of Legendrian loops to the generating sets of Kaufman and Greenberg described in the previous subsection. We then continue with a case-by-case analysis of each Legendrian link corresponding to a skew-symmetric cluster algebra of finite and affine type.

\subsubsection{Legendrian loops conjugate to $\tau_i$}\label{sub: conjugate}

Let us define a family of positive braids $\beta_{k, n_1, n_2}$ by $$\beta_{k, n_1, n_2}:=(\sigma_2\sigma_1\sigma_3\sigma_2)^{2k}\sigma_1^{n_1-2}\sigma_3^{n_2-2}$$ for $k, n_1, n_2\in \N$ and $n_1, n_2 \geq 2.$ For Legendrian links corresponding to affine and extended affine type cluster algebras, we will require $k=1$ and $n_1$ and $n_2$ agreeing with the corresponding values for $T_{\bf{n}}$ quivers. See Table \ref{tab: generating} below. 
Denote by $\w(k, n_1, n_2)$ the initial weave filling of $\La(\beta_{k, n_1, n_2})$.  

\begin{lemma}
The intersection quiver $Q_{\w(1, n_1, n_2)}$ is mutation equivalent to $T_{n_1, n_2, 2}.$
\end{lemma}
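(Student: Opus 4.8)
The plan is to compute the intersection quiver $Q_{\w(1,n_1,n_2)}$ directly from a plabic fence and then exhibit an explicit mutation sequence carrying it to $T_{n_1,n_2,2}$. First I would build the complete grid plabic graph $\mathbb{G}$ with $\La_\mathbb{G}\cong\La(\beta_{1,n_1,n_2})$ following the recipe of Subsection \ref{sub: LoopsFromFences}, reading the positive word $(\sigma_2\sigma_1\sigma_3\sigma_2)^2\sigma_1^{n_1-2}\sigma_3^{n_2-2}$ from left to right and placing one vertical edge per crossing on a fence of four horizontal lines. By the combinatorial identification in \cite[Theorem 1.2]{CasalsLi}, the initial weave $\w(\mathbb{G})$ has intersection quiver equal to the plabic-fence quiver $Q_\mathbb{G}$; since the initial weave filling $\w(1,n_1,n_2)$ may be taken to be $\w(\mathbb{G})$, it suffices to work entirely with the combinatorially defined quiver $Q_\mathbb{G}$.

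Next I would read off $Q_\mathbb{G}$ using the rule that each minimal repeated subword $\sigma_j\cdots\sigma_j$ contributes a face, hence a vertex, with arrows assigned by the white-endpoint-on-the-right convention of Subsection \ref{sub: LoopsFromFences}. The two syllables $\sigma_1^{n_1-2}$ and $\sigma_3^{n_2-2}$ each produce a linear string of faces whose length grows linearly with $n_1$ (resp.\ $n_2$); these are $A$-type subquivers with alternating arrow orientations and will form the two long tails of $T_{n_1,n_2,2}$, matching the alternating source/sink pattern on the tails of Figure \ref{fig:T_nQuiver} after deleting $v_0$. The central block $(\sigma_2\sigma_1\sigma_3\sigma_2)^2$ has fixed size independent of $n_1,n_2$, so it contributes a bounded subquiver which I would compute by hand; this subquiver contains the special vertices $v_0,v_1$, the short length-$2$ tail, and the junction vertices where the two long tails attach (the remaining $\sigma_1$ and $\sigma_3$ crossings of the block are absorbed at these junctions and fix the final length count of each tail).

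The final step is to verify mutation equivalence to $T_{n_1,n_2,2}$. Since the tails are already in standard linear form, I would need only a short mutation sequence supported on a neighborhood of the central block to straighten it into the $v_0$--$v_1$ core together with its length-$2$ tail. The main obstacle, and the only genuinely delicate point, is bookkeeping the arrows across the interface between the central block and the two long tails: I must check that the mutations applied inside the block induce no spurious arrows to the interior tail vertices and leave the tail orientations intact, so that the result is \emph{exactly} $T_{n_1,n_2,2}$ rather than merely some other quiver in its mutation class. Because the block has fixed size, this is a finite verification independent of $n_1,n_2$; I would record the explicit central subquiver together with the explicit straightening mutations, and confirm using the combinatorial mutation rule of Subsection \ref{quivers} that each step affects only the intended arrows, leaving the two $A$-type tails untouched.
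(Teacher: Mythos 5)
Your overall strategy is the same as the paper's: read the initial quiver off the plabic fence of $\La(\beta_{1,n_1,n_2})$ (the paper uses the fence of Figure \ref{fig:AffineDnPlabicFence} with $n_2-2$ extra vertical edges in the top row), then apply a short mutation sequence supported on the bounded central block (the paper mutates at $v_3$ and $v_4$) to reach $T_{n_1,n_2,2}$. The identification of the intersection quiver of the initial weave with $Q_\mathbb{G}$ via \cite[Theorem 1.2]{CasalsLi} is also the route the paper takes.

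There is, however, one concrete error in your argument. You assert that the two strings of faces coming from the syllables $\sigma_1^{n_1-2}$ and $\sigma_3^{n_2-2}$ are ``$A$-type subquivers with alternating arrow orientations'' already matching the source/sink pattern of the tails in Figure \ref{fig:T_nQuiver}, and you then conclude that no mutations are needed outside the central block. This is false: for consecutive same-color vertical edges in a single row of a plabic fence, the orientation rule produces arrows between consecutive faces that all point in the same direction, so the tails of $Q_\mathbb{G}$ are \emph{linearly} oriented $A$-type quivers, not alternating ones. Consequently, mutations supported only near the central block cannot yield $T_{n_1,n_2,2}$ on the nose, and your final ``finite verification independent of $n_1,n_2$'' would fail at exactly this point. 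The missing step, which is how the paper closes the argument, is to observe that the tails are $A$-type subquivers and that any two orientations of a tree quiver are related by a sequence of sink/source mutations; such mutations create no new arrows and do not disturb the attachment to the rest of the quiver, so the linearly oriented tails can be converted to the alternating ones within the mutation class. With that addition your proof goes through and coincides with the paper's.
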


\begin{figure}
    \centering
    \includegraphics{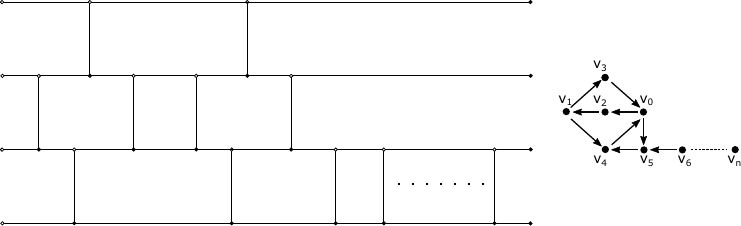}
    \caption{Plabic fence $\mathbb{G}$ corresponding to $\La(\tilde{D}_n)$. The dots correspond to $n-6$ additional vertical edges}
    \label{fig:AffineDnPlabicFence}
\end{figure}

\begin{proof}
    A plabic fence corresponding to $\La(\beta_{1, n_1, n_2})$ can be obtained from the plabic fence in Figure \ref{fig:AffineDnPlabicFence} by the addition of $n_2-2$ vertical edges between the top two horizontal lines. We then get an initial quiver by adding $n_2-2$ vertices to the quiver pictured in Figure \ref{fig:AffineDnPlabicFence} in a manner identical to the tail labeled by vertices $v_6, \dots, v_n$. Mutating at vertices $v_3$ and $v_4$ of $Q_{\w(1, n_1, n_2)}$ yields a $T_{n_1, n_2, 2}$ quiver up to the alternating behavior of the arrows in the tails. This alternation can be obtained by observing that the tails are $A_n-$type subquivers, which are mutation equivalent to any quiver given by a choice of orientation on the underlying Dynkin diagram.  
\end{proof}

To present Legendrian loops of $\La(\beta_{k, n_1, n_2})$, we will consider the Legendrian isotopic link given as the $(-1)$-framed closure of the braid $(\sigma_2\sigma_1\sigma_3\sigma_2)^{2k}\sigma_1^{n_1-2}\sigma_3^{n_2-2}\sigma_1^2\sigma_3^2(\sigma_2\sigma_1\sigma_3\sigma_2)^2$. Note that in either front, the Legendrian link $\La(\beta_{k, n_1, n_2})$ is Legendrian isotopic to the link given by simultaneously satelliting $\gamma_1= \sigma_1^{n_1}$ and $\gamma_2=\sigma_1^{n_2}$ about the two strands of the Legendrian Hopf link given as the $(-1)$-framed closure of the braid $\sigma_1^4$. With this description, we immediately obtain two $\vartheta$-loops, depicted in Figure \ref{fig:LoopEx}. Note that when $n_i=2$, $\vartheta_i$ can still be identified because of our choice of braid word for $\Delta^2$ even if no such loop is apparent in the Legendrian isotopic rainbow closure  $\La((\sigma_2\sigma_1\sigma_3\sigma_2)^2\sigma_1^{n_1-2}\sigma_3^{n_2-2})$.

\begin{figure}
    \centering
    \includegraphics[width=.6\textwidth]{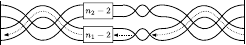}
    \caption{The Legendrian loop $\vartheta_1$ for the braid $\beta_{1, n_1, n_2}$ pictured in $J^1\S^1$. The loop takes one of the $n_1-2$ crossings appearing in the box and drags it around the front, following the path of the dotted arrows.}
    \label{fig:LoopEx}
\end{figure}

Recall that, as stated in Theorem \ref{thm: TauRelation}, the group $\Gamma_\tau$ is Abelian. The following result, along with the relationship between $\vartheta$ loops and the $\tau_i$ generators discussed below, geometrically realizes this property.

\begin{proposition}\label{prop: LoopsCommute}
Let $\La$ be a Legendrian link that is isotopic to a Legendrian link obtained from simultaneously satelliting patterns $\gamma_1$ and $\gamma_2$ about two distinct strands of some companion Legendrian $\La'$. For any exact Lagrangian filling $L$ of $\La$, we have $\vartheta_1\circ \vartheta_2(L)\cong \vartheta_2\circ\vartheta_1(L)$.  
\end{proposition}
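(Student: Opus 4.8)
The plan is to exploit the fact that the two $\vartheta$-loops are supported in disjoint regions of $(\R^3, \xi_{\st})$, so that they commute as contact isotopies and therefore induce commuting operations on Hamiltonian isotopy classes of fillings. Concretely, I would first recall from Definition \ref{def: theta loop} that each $\vartheta_i$ is realized by a compactly supported contact isotopy $\tilde{\Theta}^i_t$, obtained by extending a radial flow in the neighborhood $\operatorname{Op}(\La_i')$ of the $i$th companion strand $\La_i'$ by the identity to all of $\R^3$. Since $\La_1'$ and $\La_2'$ are distinct strands of the companion Legendrian $\La'$, they are disjoint embedded Legendrian circles and hence admit disjoint tubular contact neighborhoods with $\operatorname{Op}(\La_1') \cap \operatorname{Op}(\La_2') = \varnothing$, each contactomorphic to $(J^1\S^1, \xi_{\st})$. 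I would arrange the two loops to be supported in these disjoint neighborhoods, which is exactly what the simultaneous satellite construction of $\gamma_1$ and $\gamma_2$ about separate strands provides.

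Next, because the supports are disjoint, the two families of contactomorphisms commute: $\tilde{\Theta}^1_s \circ \tilde{\Theta}^2_t = \tilde{\Theta}^2_t \circ \tilde{\Theta}^1_s$ for all $s, t \in [0,1]$. This lets me define a two-parameter family $\Phi_{s,t} := \tilde{\Theta}^1_s \circ \tilde{\Theta}^2_t(\La)$ of Legendrian embeddings parametrized by the square $[0,1]^2$. Traversing the boundary of the square in the two possible ways exhibits a homotopy, rel the basepoint $\La$, between the concatenated loops $\vartheta_1 * \vartheta_2$ and $\vartheta_2 * \vartheta_1$ inside the based loop space of $\Leg(\La)$. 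This is precisely the standard Eckmann--Hilton argument that disjointly supported loops commute in $\pi_1(\Leg(\La))$.

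Finally, I would pass from this homotopy of loops to the claimed equivalence of fillings. The action of a Legendrian loop on a filling $L$ is defined by concatenating $L$ with the trace $\Trace(\vartheta)$, and a based homotopy of loops produces an exact Lagrangian isotopy of the corresponding traces, which is Hamiltonian; thus the induced action on fillings depends only on the based homotopy class of the loop, i.e. it factors through $\pi_1(\Leg(\La))$. Combining this with the homotopy $\vartheta_1 * \vartheta_2 \simeq \vartheta_2 * \vartheta_1$ yields $\vartheta_1 \circ \vartheta_2(L) \cong \vartheta_2 \circ \vartheta_1(L)$, where $\cong$ denotes Hamiltonian isotopy. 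The main point requiring care is the disjointness of the supports: one must verify that the regions in which the two loops drag their respective crossings around $\La_1'$ and $\La_2'$ can genuinely be taken disjoint within the chosen front. Once this is established from the satellite construction placing $\gamma_1$ and $\gamma_2$ in separate jet neighborhoods, the commutation of the flows and its consequence for the action on fillings are formal.
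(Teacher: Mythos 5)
Your proposal is correct and follows essentially the same route as the paper: both arguments use the disjointness of the supports of the two $\vartheta$-loops to interpolate between the two orders of concatenation (the paper phrases this as gradually sliding the time intervals $t_1$ and $t_2$ past one another, which is exactly your two-parameter family $\Phi_{s,t}$), and then conclude that the resulting traces are Hamiltonian isotopic. You make explicit a couple of points the paper leaves implicit (the disjoint contact neighborhoods and the fact that the action on fillings factors through $\pi_1(\Leg(\La))$), but the underlying idea is identical.
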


\begin{proof}
    Consider the concatenation of $\Trace(\vartheta_2)$ with $ \Trace(\vartheta_1)$. We can produce the desired Hamiltonian isotopy between concordances by producing showing that the composition of Legendrian loops $\vartheta_2\circ\vartheta_1$ is homotopic to $\vartheta_1\circ \vartheta_2$. The first composition can be described as first performing $\vartheta_1$ during time $t_1$ and then performing $\vartheta_2$ during time $t_2$ for $0\leq t_1<\frac{1}{2}$ and $\frac{1}{2}\leq t_2\leq 1$. The required homotopy is then defined by gradually increasing $t_1$ and gradually decreasing $t_2$. 
 
\end{proof}

We now show that our description of Legendrian loops coincides with $\Gamma_\tau$.

	\begin{center}\begin{figure}[h!]{ \includegraphics[width=.8\textwidth]{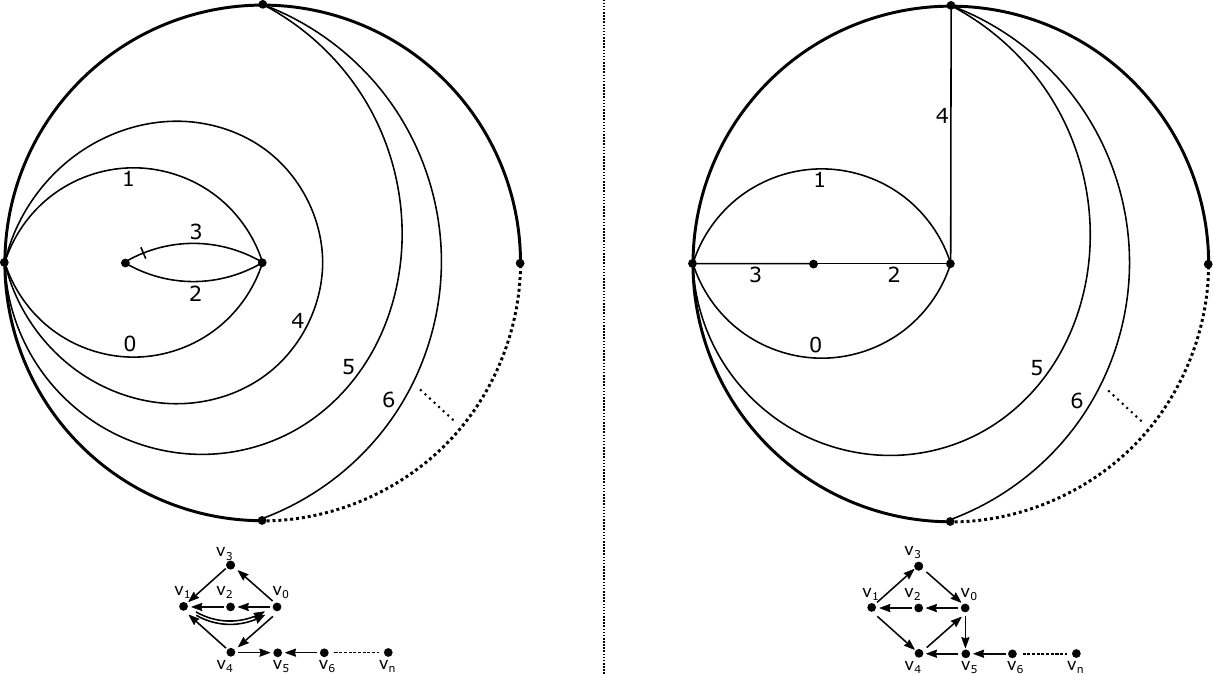}}\caption{A pair of tagged triangulations of $\D^2_{n-2, 0, 0}$. The triangulation on the left corresponds to the quiver $T_{n-2, 2, 2}$, while the triangulation on the right corresponds to the quiver $\mathbb{G}(\La(1, n_1, n_2))$ and is obtained from the triangulation on the left by mutation at edges labeled 3 and 4. The dotted lines on the left represent a zig-zag pattern of $n-6$ edges, while the dotted lines on the right represent $n-6$ edges all sharing the top vertex. 
			}
			\label{fig:TwicePuncturedTaggedTriangulations}\end{figure}
	\end{center}

\begin{lemma}\label{lemma: loops conjugate}
    For any Legendrian $\La(\beta_{1, n_1, n_2})$, 
    the cluster automorphisms $\tilde{\vartheta}_1$ and $\tilde{\vartheta}_2$ induced by the corresponding Legendrian loops are conjugate to $\tau_1$ and $\tau_2$.  
\end{lemma}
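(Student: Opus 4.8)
The plan is to compute the mutation-and-permutation data of $\tilde{\vartheta}_1$ and $\tilde{\vartheta}_2$ explicitly and match it against the definition of $\tau_1$ and $\tau_2$, using the fact that two cluster automorphisms are conjugate precisely when they act identically on cluster charts. First I would use the description of $\La(\beta_{1,n_1,n_2})$ as the simultaneous satellite of the patterns $\gamma_1=\sigma_1^{n_1}$ and $\gamma_2=\sigma_1^{n_2}$ about the two strands of the Hopf link $\La(\sigma_1^4)$. With this identification each $\vartheta_i$ is a $\vartheta$-loop of exactly the form covered by Lemma \ref{lemma: satellite loop mutations}, with the companion $\La(\sigma_1^4)$ playing the role of $\beta\Delta^2$ and $\sigma_1^{n_i}$ the satellited braid. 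Lemma \ref{lemma: satellite loop mutations} then hands us an explicit mutation sequence and vertex permutation for each $\tilde{\vartheta}_i$, read off the plabic fence $\mathbb{G}$ of Figure \ref{fig:AffineDnPlabicFence}, which I would record as tuples $\tilde{\vartheta}_i=(\dots;\pi_i)$ in the labeling of $Q_{\w(1,n_1,n_2)}$.

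Next I would recall $\tau_i=(\mu_{i_{\text{odd}}}\mu_{i_{\text{even}}}\mu_{i_2}\mu_{v_0}\mu_{v_1};(i_2\,i_0\,i_1))$ on the quiver $T_{n_1,n_2,2}$, and invoke the preceding lemma, which identifies $Q_{\w(1,n_1,n_2)}$ with $T_{n_1,n_2,2}$ after mutating at $v_3$ and $v_4$. Writing $\psi=\mu_{v_4}\mu_{v_3}$ for this seed isomorphism, together with the vertex identification established there, the object to compare against $\tilde{\vartheta}_i$ is the transported automorphism $\psi^{-1}\tau_i\psi$, which acts on the very same cluster charts as $\tilde{\vartheta}_i$. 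In these terms the lemma is the identity $\tilde{\vartheta}_i=\psi^{-1}\tau_i\psi$ as actions on charts.

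To verify this identity I would split into cases by type. When $T_{n_1,n_2,2}$ is of surface type, namely the $D$ and $\tilde{D}$ cases, I would pass to the tagged triangulations of the twice-punctured disk $\D^2_{n-2,0,0}$ of Figure \ref{fig:TwicePuncturedTaggedTriangulations}: the $T$-seed and the plabic seed are the two triangulations pictured there, related exactly by the flips at the arcs labeled $3$ and $4$, and under the isomorphism $\Mod_{tag}(\D^2_{n-2,0,0})\cong\SG$ of Proposition 8.5 of \cite{BridgelandSmith_QuadDiff} both $\tilde{\vartheta}_i$ and $\tau_i$ become the same point-pushing, half-twist mapping class that cyclically advances the $i$th fan of arcs by one marked point. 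Equality of mapping classes then forces conjugacy of the two cluster automorphisms. For the remaining extended affine $E$-type cases, where no surface model is available, I would instead finish by the direct bookkeeping comparison: transport $\tau_i$ through $\psi$ using the rule for how a mutation relabels a neighbouring mutation sequence, and check term-by-term that the resulting mutation sequence and permutation coincide with the tuple produced by Lemma \ref{lemma: satellite loop mutations}.

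The hard part is this final matching. Because conjugate cluster automorphisms can be presented by visibly different mutation--permutation tuples, one cannot simply compare the two tuples symbol by symbol; one must carefully track the vertex relabeling induced both by $\psi$ and by the interior mutations of each sequence. The cleanest route is therefore the surface identification wherever it applies, reserving the explicit tuple computation for the $E$-type quivers, where the short third tail of length $2$ keeps the bookkeeping manageable.
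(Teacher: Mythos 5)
Your overall strategy---compute the mutation--permutation data of $\tilde{\vartheta}_1,\tilde{\vartheta}_2$ via Lemma \ref{lemma: satellite loop mutations} and then match against $\tau_1,\tau_2$ through the tagged-triangulation model of the twice-punctured disk---is essentially the paper's argument for the surface-type part. However, there are two genuine gaps.

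First, you assert that Lemma \ref{lemma: satellite loop mutations} ``hands us an explicit mutation sequence and vertex permutation for \emph{each} $\tilde{\vartheta}_i$, read off the plabic fence.'' That lemma is stated only for satellited braids $\sigma_1^k$ with $k\geq 3$, so it does not apply to $\vartheta_2$ in the case $n_2=2$ (which is exactly the $\tilde{D}_n$ family that anchors the whole argument). Geometrically, the crossings moved by $\vartheta_2$ are not visible in the rainbow closure $\La((\sigma_2\sigma_1\sigma_3\sigma_2)^2\sigma_1^{n-3})$, so the plabic-fence combinatorics cannot produce its mutation sequence. The paper has to determine $\tilde{\vartheta}_2=(1,4;(1\,4\,0\,3))$ by an explicit Legendrian weave computation occupying all of Appendix \ref{sec: appendix}; without some substitute for that computation your proof has no input for $\tilde{\vartheta}_2$ in precisely the base case you rely on.

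Second, your case division underestimates the scope of the non-surface cases. The lemma covers all $n_1,n_2\geq 2$, so the quivers $T_{n_1,n_2,2}$ with $n_1,n_2>2$ form an infinite family, not just the handful of affine and extended affine $E$-types; a term-by-term tuple comparison would have to be carried out uniformly in $n_1,n_2$, and you yourself note that such comparisons are delicate because conjugate automorphisms admit different presentations. The paper avoids this entirely with a short reduction: for $n_2>2$ the mutation sequence of $\tilde{\vartheta}_1$ (resp.\ $\tilde{\vartheta}_2$) fixes every quiver vertex on the other tail, so freezing or deleting those vertices produces a $\tilde{D}$-type quiver on which the already-established surface argument applies verbatim. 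Adopting that reduction would both close the scope problem and spare you the bookkeeping you flag as the hard part. (A smaller caveat: the isomorphism $\Mod_{tag}\cong\SG$ you invoke excludes twice-punctured disks with four or fewer boundary marked points, so for small $n$ you should, as the paper does, compare the actions on tagged triangulations directly rather than pass through that isomorphism.)
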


\begin{proof}
    We first establish the statement in the case of $\La(\tilde{D}_n)\cong \La(\beta_{1, n-2, 2})$ using the combinatorics of tagged triangulations of a twice-punctured disk. We then leverage these combinatorics for the case of $n_2>2$. 

Consider the plabic fence $\mathbb{G}(\tilde{D}_n)$ depicted in Figure \ref{fig:AffineDnPlabicFence}, corresponding to the Legendrian link $\La(\tilde{D}_n)\cong \La_{1, n-2, 2}$. The sequence of mutations corresponding to $\vartheta_1$ can be determined from $\mathbb{G}(\tilde{D}_n)$ using Lemma \ref{lemma: satellite loop mutations}. In particular, $\vartheta_1$ induces the mutation sequence  \begin{equation}
    \tilde{\vartheta}_1=(5, 0, 3, 1, 1, 4, 6, \dots, n; (1\,4\,5\,0\,3))
\end{equation} while the loop $\vartheta_2$ induces the mutation sequence $\tilde{\vartheta}_2=(1, 4; (1\,4\,0\,3))$. This second mutation sequence is verified by explicit computation in the initial weave filling $\w(\mathbb{G}(\tilde{D}_n))$ in Appendix \ref{sec: appendix}. Note that the use of Legendrian weaves for this computation appears to be necessary, as the crossings involved in the Legendrian loop $\vartheta_2$ do not appear in the rainbow closure $\La((\sigma_2\sigma_1\sigma_3\sigma_2)^2\sigma_1^{n-3})$ and therefore are not straightforwardly captured by the combinatorics of the plabic fence.

Figure \ref{fig:TwicePuncturedTaggedTriangulations} depicts two tagged triangulations of a twice-punctured disk. The triangulation $\mathcal{T}_1$ (left) corresponds to a quiver identical to $Q_{\mathbb{G}(\tilde{D}_n)}$, while the triangulation $\mathcal{T}_2$ (right) corresponds to a quiver identical to $T_{n-2, 2,2}$. The generator $\tau_1$ corresponds to a rotation of the boundary of the disk by $2\pi/n$ \cite[Lemma B.3]{KaufmanGreenberg}. Therefore, to show that $\tau_1$ and $\tilde{\vartheta}_1$ are conjugate, we need to show that $\tilde{\vartheta}_1$ also corresponds to a rotation of the boundary of the disk by $2\pi/n$. This is done explicitly in Figure \ref{fig:Theta1Triangulations}.

\begin{figure}
    \centering
    \includegraphics[width=\textwidth]{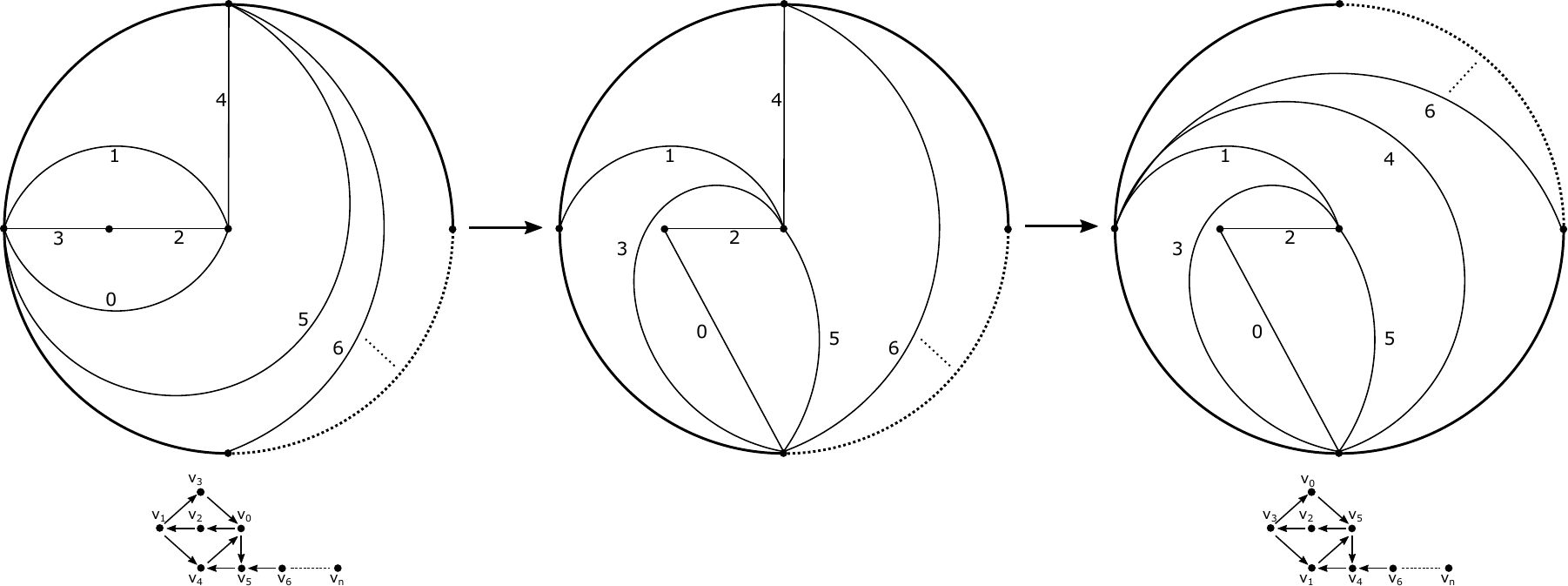}
    \caption{Sequence of mutations induced by the Legendrian loop $\vartheta_1$ in the tagged triangulation corresponding to $Q_{\mathbb{G}(\tilde{D}_n)}$. The second triangulation is obtained from the first by mutating at edges labeled $5,0,$ and $3$ in order. The third triangulation is obtained from the second performing the remaining mutations of $\vartheta_1.$}
    \label{fig:Theta1Triangulations}
\end{figure}

Similar to the case of $\tau_1$, the generator $\tau_2$ corresponds to a half twist about the two punctures in the triangulation $\mathcal{T}_2$. Figure \ref{fig:Theta2Triangulations} shows the computations for $\tilde{\vartheta}_2$. 

\begin{figure}
    \centering
    \includegraphics[width=.65\textwidth]{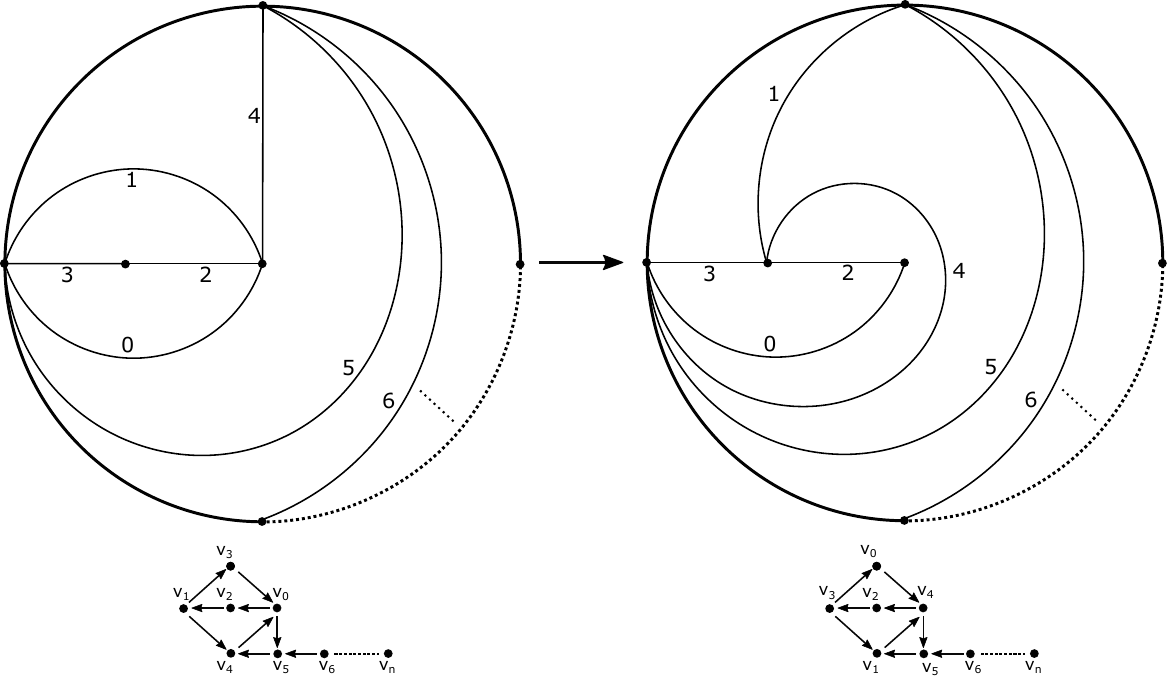}
    \caption{Sequence of mutations induced by the Legendrian loop $\vartheta_2$ in the tagged triangulation corresponding to $Q_{\mathbb{G}(\tilde{D}_n)}$.}
    \label{fig:Theta2Triangulations}
\end{figure}

For $n_2>2$, we observe that the sequence of mutations induced by $\tilde{\vartheta}_1$ or $\tilde{\vartheta}_2$ fixes the quiver vertices corresponding to the other tail. Freezing or deleting these vertices therefore yields a $\tilde{D}_n$ quiver and we can simply apply our above reasoning to show that cluster automorphisms are conjugate.  
\end{proof}

We now give a case-by-case description of $\vartheta$ loops as $\tau_i$ generators of $\Gamma_\tau$. We consider the cases of finite and affine type separately and postpone the cases of $\La(E_7^{(1, 1)})\cong \La(4, 4)$ and $\La(E_8^{(1,1)})\cong \La(3,6)$ to the following subsection as a subcase of Legendrian torus links.

\begin{proof}[Proof of Theorem \ref{thm: gens}] Throughout this proof, we freely reference the specific cluster modular groups and Legendrian links corresponding to a given Dynkin type as listed in Table \ref{tab: generating}.

\textbf{Finite type:}
\begin{enumerate}
    \item[$A_n$:] The K\'alm\'an loop of $\La(A_n)$ has order $n+3$ and generates $\SG(\FM(\La(A_n)))$, as stated in \cite[Corollary 4.2]{Hughes2021b}.

	\begin{center}
		\begin{figure}[h!]{ \includegraphics[width=.4\textwidth]{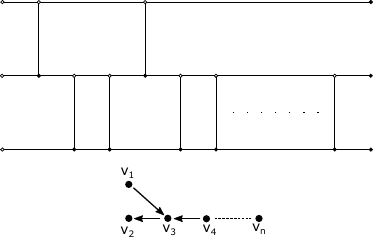}}\caption{Plabic fence $\mathbb{G}(D_n)$ with corresponding quiver. The dots represent $n-5$ additional vertical edges.}
			\label{fig: DnPlabicFence}\end{figure}
	\end{center}

\item[$D_n$:]  We use the combinatorics of tagged triangulations of once-punctured $n$-gons to understand the cluster modular group of $\FM(\La(D_n))$. By \cite[Theorem 1.2]{AssemSchifflerShramchenko}, the generators of the cluster modular group correspond to rotation of the $n$-gon by $2\pi/n$ and simultaneous changing of the tags at the puncture when $n\geq 5$. Let $\mathbb{G}(D_n)$ be the plabic fence pictured in Figure \ref{fig: DnPlabicFence}. The corresponding Legendrian $\La(\mathbb{G}(D_n))$ is Legendrian isotopic to the link $\La(D_n)$ defined in the introduction and admits a description as a Legendrian satellite $\La(\sigma_1^4, 1, \sigma_1^{n-2})$ of the Hopf link. We denote the corresponding loop by $\vartheta$ and use Lemma \ref{lemma: satellite loop mutations} to compute the induced sequence of mutations. In Figure \ref{fig: DnPuncturedDisks} we give a tagged triangulation with a quiver mutation equivalent to $Q_{\mathbb{G}(D_{n})}$ and show by explicit computation that the sequence of mutations induced by $\vartheta$ corresponds to rotation of the punctured $n$-gon by $2\pi/n$. In addition, the $\DT$ transformation induces a sequence of mutations that corresponds to a rotation of the once-punctured $n$-gon by $2\pi/n$ and a simultaneous changing of all the taggings at the puncture. Therefore, the pair $\vartheta$ and $\DT$ generate the cluster modular group $\SG(\FM(\La(D_n)))$. 
	\begin{center}
		\begin{figure}[h!]{ \includegraphics[width=\textwidth]{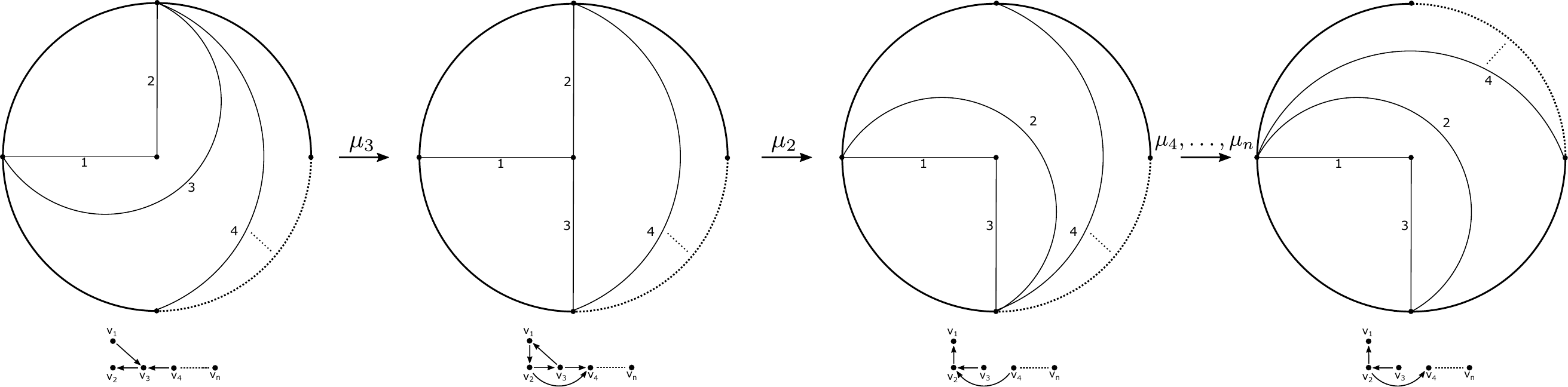}}\caption{Once-punctured disks depicting the mutation sequence $(3, 2, 4, \dots, n; (1\,2\,3))$ induced by the Legendrian loop $\vartheta$ of $\La(D_n)$.}
			\label{fig: DnPuncturedDisks}\end{figure}
	\end{center}
    
   \item[$E_6, E_7, E_8$]  For $\La(E_6), \La(E_7), \La(E_8)$, the cluster modular group is generated solely by the $\DT$ transformation, so the statement follows immediately from the characterization of $\DT$ given in \cite[Theorem 5.8]{CasalsWeng}.\\
   
\end{enumerate}    
\textbf{Affine type:}

\begin{enumerate} 

\item[$\tilde{D}_n$, $n>4$:]  Recall from Subsection \ref{sub: TnQuivers} that the cluster modular group $\SG$ of an affine type cluster algebra is isomorphic to $\Gamma_\tau\rtimes \Aut(T_{\bf{n}})$ and that $\Gamma_\tau$ is generated by $\tau_1$, $\tau_2,$ and $\tau_3$. By Lemma \ref{lemma: loops conjugate}, we can realize $\tau_1$ and $\tau_2$ as Legendrian loops, and the relation $\tau_1^{n_1}=\tau_2^{n_2}=\tau_3^{n_3}$ implies that the subgroup generated by $\tau_1$ and $\tau_2$ is a finite index subgroup of $\SG$. When $n>4,$ Kaufman and Greenberg's presentation contains a quiver automorphism of order 2 that does not obviously appear as an automorphism induced by a Legendrian loop, hence leading to the exclusion of $\La(\tilde{D}_n)$ in the statement of the theorem. 

\item[$\tilde{D}_4$:] The case of $\La(\tilde{D}_4)$ is analogous to $\La(\tilde{D}_n)$ when $n>4$ except that the quiver automorphism group for the $T_{2, 2, 2}$ quiver that Kaufman and Greenberg consider is the symmetric group $S_3$. We can produce an order-two generator of this $S_3$ factor by realizing $\La(\tilde{D}_4)$ as the $(-1)$-closure of the braid $\beta=(\sigma_2\sigma_1^3\sigma_2\sigma_1^3\sigma_2\sigma_3\sigma_1^{n-2})^2$. The $n+8$th power $\delta^{n+8}$ of the cyclic rotation then induces an involutive cluster automorphism, where we conjugate by the induced action of a Legendrian isotopy taking our original front for $\La(\tilde{D}_4)$ to our particular choice of $(-1)$-closure.

 \item[$\tilde{E}_7, \tilde{E}_8$:]   In the case of $\tilde{E}_7$ or $\tilde{E}_8$, an inspection of the quiver $T_{\bf{n}}$ verifies that that $\Aut(T_{\bf{n}})$ is trivial. Therefore, we need only show that we can generate $\Gamma_\tau$ by Legendrian loops and $\DT$. 
    By \cite[Theorem 4.14]{KaufmanGreenberg}, we have that $\DT=\tau_1\tau_2\tau_3\gamma^{-1}$. Solving for $\tau_3=\DT\tau_1^{-1}\tau_2^{-1}\gamma$ therefore gives the remaining generator of $\Gamma_\tau$. 
    
\item[$\tilde{E}_6$:]  In the case of $\tilde{E}_6$, we require a cluster automorphism of order three induced by a Legendrian loop. The Legendrian loop in question is obtained by performing a Legendrian isotopy $\psi$ to obtain the rotationally symmetric braid $(\sigma_2\sigma_1^4)^3$, composing with $\delta^5$ and then performing $\psi^{-1}$ to return to the original front projection. The induced action of this loop has order three, as desired.
\end{enumerate}

\end{proof}

\begin{table}[]
\begin{tabular}{l|l|l}
Cluster Type  & Cluster modular group & 
$T_{\bf{n}}$ quiver type \\
\hline
$A_n$      & $\Z_{n+3}$  
& -- \\

$D_n$ $n>4$ &$\Z_{n}\times\Z_2$  
& --\\

$E_6, E_7, E_8$     & $\Z_{14}, \Z_{10}, \Z_{16}$            
& --\\


$\tilde{D}_n$ $n>4$  & $\Mod_{tag}(\D^2_{n-2, 0, 0})$     & 
(n-2, 2, 2)
\\

$\tilde{E}_6$ & $(S_3\times \Z)$     &  (3, 3, 2)     \\
$\tilde{E}_7$ & $(\Z_2\times \Z)$     &   
 (4, 3, 2)  \\
$\tilde{E}_8$ & $\Z$     &       
(5, 3, 2)\\


$E_7^{(1, 1)}$       &    $\pi_1(Conf(\S^2, 4))\rtimes \Z_2$                                    & (4, 4, 2)   \\                             

$E_8^{(1,1)}$        &             $\PSL_2(\Z)\rtimes \Z_6$                         & (6, 3, 2)              
\end{tabular}
\caption{\label{tab: generating}
}\end{table}

\begin{remark}
The arguments employed in the affine type case apply to the slightly more general family of Legendrian links $\La(\beta_{1, n_1, n_2})$. These links correspond to $T_{n_1, n_2, 2}$ quivers with $n_1\geq n_2 > 2$ and Kaufman and Greenberg conjecture that the cluster modular group obtained from such quivers is isomorphic to $\Gamma_\tau$ when $n_1\neq n_2$ and $\Gamma_\tau \rtimes \Z_2$ when $n_1=n_2$. By Lemma \ref{lemma: loops conjugate} and the fact that $\tau_3$ can be obtained from  $\tau_1, \tau_2$ and $\DT$, we are able to generate $\Gamma_\tau$ by the induced action of Legendrian loops. In addition, the anti-symplectic involution $\iota$ induces the quiver automorphism generating $\Aut(T_{\bf{n}})$ when $n_1=n_2$. As a result, we obtain a symplectic-geometric description of the conjectured cluster modular group of Kaufman and Greenberg \cite[Conjecture 4.7]{KaufmanGreenberg} in this special case. 
\end{remark}

\subsection{Torus links and cluster modular groups of $\Gr^\circ(k, n+k)$}\label{sec: Torus Links}

In this subsection, we show that Legendrian loops of torus links $\La(k,n)$ generate an index-two subgroup of a conjectural cluster modular group of $\Gr^\circ (k, n+k)$, the affine cone of the Grassmannian.  
We start by reviewing the relationship between $\FM(\La(k, n))$ and $\Gr^\circ(k, n+k)$. We then describe the conjectural cluster modular group $\SG'(k, n)$ and an explicit generating set given in \cite{fraser2018braid}. Finally, we show that the Legendrian loops $\Sigma_i$ and $\rho=\delta^k$ induce generators of $\SG'(k, n)$.

\subsubsection{Relationship between $\FM_1(\La(k,n), T)$ and $\Gr^\circ(k, n+k)$}

We now briefly describe an explicit isomorphism between $\FM_1(\La(k,n), T)$ and $\Gr^\circ(k, n+k)$. For more details, we refer the interested reader to \cite[Section 6]{CLSBW23} for a detailed description in the more general setting of Positroid strata of $\Gr^\circ(k, n+k)$.

Consider a front projection of $\La(k, n)\subseteq J^1\S^1$ corresponding to the braid word $\beta_{k, n}=(\sigma_1\dots \sigma_{k-1})^{n+k}$ and add marked points following the convention given by \cite[Theorem A.(iii)]{CLSBW23}. Following the description of $\FM(\La, T)$ in Subsection \ref{sec: sheaves}, the sheaf moduli of $\La(k,n)$ is then given by a sequence of flags $V_1, \dots, V_{k+n}$ each of which satisfies $V_i^{(j)}\pitchfork V_{i+1}^{(j)}$ for every $i$ and $j$. In the particular case of $\La(k,n)$, the choices of (complex) lines $v_1, \dots, v_{n+k}$ corresponding to the regions between pairs of $\sigma_1$ crossings determine all of the other higher-dimensional vector spaces $V^{(j)}, j \geq 2$ making up the remaining data of each of flags. Indeed, the flag $V_i$ is given by $v_i \subseteq v_{i}\wedge v_{i+1}\subseteq \dots \subseteq v_i\wedge \dots \wedge v_{i+k-1}$ with indices taken modulo $n+k$. See Figure \ref{fig: T44Flags} for an example computation in the case of $\La(4, 4)$.

	\begin{center}
		\begin{figure}[h!]{ \includegraphics[width=.4\textwidth]{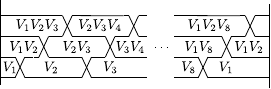}}\caption{Flags making up $\FM(\La(4, 4))$. Here $V_iV_j$ denotes the subspace $V_i\wedge V_j$.}
			\label{fig: T44Flags}\end{figure}
	\end{center}

Let $\Gr(k, n)$ the space of $k$ dimensional subspaces of $\C^n$ and consider its Pl\"ucker embedding. We denote by $\Gr^\circ(k, n)\subseteq \C^{n \choose k}$ the affine cone of the Grassmannian, i.e. the affine subvariety of $\C^{n \choose k}$ whose points satisfy the Pl\"ucker relations. By the transversality conditions defining $\FM(\La(k,n), T)$ and the framing data given by the choice of $T$, the vectors $v_1,\dots, v_{n+k}$ form the columns of a full rank $k\times n+k$ matrix. This determines an explicit isomorphism between $\FM(\La(k, n), T)$ and $\Gr^\circ(k, n+k)$
by interpreting the minors of the resulting matrix $(v_1 \dots v_{n+k})$ as the Pl\"ucker coordinates of a point in the affine cone of the Grassmannian.

The affine variety $\Gr^\circ(k, n)$ admits a cluster structure with an initial seed given by a maximal weakly-separated collection of Pl\"ucker coordinate functions \cite{Scott06}. For certain choices of $k$ and $n,$ we recover Legendrian links of finite and affine type. In these cases, one can verify that the results below recover the cluster modular groups described in the previous subsection.

\subsubsection{Conjectural presentation of $\SG(\Gr^\circ(k, n+k))$} 

We give two types of cluster automorphisms of $\Gr^\circ(k, n+k)$. The first, known as the cyclic shift, is relatively well-studied in the cluster literature and played a key role in proving that the totally non-negative Grassmannian is homeomorphic to a closed ball \cite{GalashinKarpLam22}. Given vectors $v_1, \dots, v_{n+k}$ satisfying the transversality conditions of $Gr^\circ(k, n+k)$, the cyclic shift $\rho$ acts by $(v_1, \dots,  v_{n+k})\mapsto (v_{2}, \dots v_{n+k}, (-1)^{k-1}v_1)$. From the construction, we can immediately observe that $\rho$ has order $n+k$. 

The second type of cluster automorphism is the main object of study in \cite{fraser2018braid}, where Fraser produces a braid group action on $Gr^\circ(k, n+k)$. We first denote $d=\gcd(k, n+k)$ and assume that $d>1$. Given $v_1, \dots, v_{n+k}$ as above and $i\in \{1, \dots, d-1\}$,  the action of $\sigma_i$ on the first $d$ vectors is
\begin{equation}\label{eq: SigmaLoop} 
    (v_1, \dots, v_d)\mapsto (v_1, \dots, v_{i-1}, v_{i+1}, w_1, v_{i+2}, \dots, v_d) 
\end{equation}
with $w_1$ defined by the conditions 
\begin{equation}\label{eq: W1Def}
    v_i\wedge v_{i+1}=v_{i+1} \wedge w_1 \text{ and } w_1 \in \text{span}\{v_2, \dots, v_{i+k}\}
\end{equation}
Note that Equations \ref{eq: SigmaLoop} and \ref{eq: W1Def} together imply that $w_1\in \text{span}\{v_i, v_{i+1}\}\cap \text{span}\{v_2, \dots, v_{i+k}\}$ with the normalization given by Equation \ref{eq: SigmaLoop} ensuring that $w_1$ is uniquely defined. The cluster automorphism $\sigma_i$ is defined on the remaining $n+k-d$ vectors by adding multiples of $d$ to the indices of the vectors and defining $w_j$ analogously.

The extended affine braid group $\widehat{\Br}_{d-1}$ on $d$ strands is generated by elements $\rho, \sigma_1, \dots, \sigma_{d-1}$ with relations 
$\sigma_{i+1}\sigma_i\sigma_{i+1}=\sigma_i\sigma_{i+1}\sigma_i$, $\sigma_i\sigma_j=\sigma_j\sigma_i$ for $|j-i|\geq 2$, and $\rho\sigma_i\rho^{-1}=\sigma_i+1$ for $i\in \{1, \dots, d-2\}$ and $\rho\sigma_{d-1}\rho^{-1}=\rho^{-1}\sigma_1\rho$. Fraser's main result states that $\sigma_1, \dots, \sigma_{d-1}$ defined above together with the cyclic shift generate an extended affine braid group action on $\Gr^\circ (k, n+k)$ \cite[Theorem 5.3]{fraser2018braid}. Denote by $G_d'=\widehat{Br}_{d-1}$ when $d<k$ and $G_k'=Br_k$ when $d=k$. Fraser obtains a homomorphism $G_d'\to \mathcal{G}(\Gr^\circ(k, n+k))$ and conjectures that for $n\geq 2k$, the kernel of this homomorphism is generated by $\rho^n$. For $n=k$, he identifies the additional relation $\sigma_1\dots \sigma_{k-1}^2\dots\sigma_1$. These yield finite index subgroups of the conjectured cluster modular groups, as the addition of the DT transformation to the generating set adds a $\Z_2$ factor given by the relation $\DT^2= \rho^k$. In particular, for $n=k$, Fraser's conjectural cluster modular groups are isomorphic to $\pi_1(Conf(\S^2, k))\rtimes \Z_2$ where $Conf(\S^2, k)$ is the configuration space of $k$ points on the sphere. For the purpose of comparing with the results of \cite{CasalsGao}, note that $Conf(\S^2, k)$ modulo its center is isomorphic to $\Mod(\Sigma_{0, k})$. See Table \ref{tab: CMG Gr} for a summary of which values of $k$ and $n$ yield the groups mentioned above.

 \begin{table}[]
\begin{tabular}{l|l|l}
$\La(k, n)$  & $d$ & $\SG'(k, n)$   \\
\hline
$\La(2, n)\,  n \geq 3$ & $n \mod 2$  & $\Z_{n+2}$ \\

$\La(k, k) \, k\geq 3$ & $k$ & $\pi_1(Conf(\S^2, k))\rtimes \Z_2$\\

$\La(k, n) \, n\geq 2k$    & $k$            
& $(\Br_k\rtimes \Z_2)/\langle \rho^{n+k}\rangle$\\

$\La(k, n) \, n \geq k+1$ & $<k$  & $(\widehat{Br}_{d-1}\rtimes \Z_2)/\langle\rho^{n+k}\rangle$

\end{tabular}
\caption{\label{tab: CMG Gr}
}\end{table}

\subsubsection{Legendrian loops induce Fraser's automorphisms}\label{sub: torus loops}

In \cite{CasalsGao}, the authors show that the K\'alm\'an loop induces the cyclic shift automorphism $\rho$ on $\SM_1(\La(3,6))$ and $\SM_1(\La(4, 4))$ while the $\vartheta$ loops they describe induce $\sigma_i$ actions. Their work predates any proof of a cluster structure on these sheaf moduli, so they rely on combinatorial tools to verify that the induced actions of these Legendrian loops yield faithful group actions on the sheaf moduli. Our goal in this subsection is to describe the relevant Legendrian loops as cluster automorphisms and extend their work to the general setting of $\Gr^\circ(k, n+k)$.

We begin with the following lemma relating the K\'alm\'an loop $\rho$ to the cyclic shift automorphism. 

\begin{lemma}\label{lemma: Kalman loop} 
    The Legendrian loop $\rho$ induces the cyclic shift on $\FM(\La(k, n), T)\cong \Gr^\circ(k, n+k)$.
\end{lemma}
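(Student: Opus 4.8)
The plan is to work entirely within the flag model of $\FM(\La(k,n),T)$ and its isomorphism with $\Gr^\circ(k,n+k)$ recalled above, where a point is recorded by the lines $v_1,\dots,v_{n+k}$ attached to the $n+k$ regions lying between consecutive $\sigma_1$-crossings of the front of $\beta_{k,n}=(\sigma_1\cdots\sigma_{k-1})^{n+k}$, and each flag is recovered as $V_i=\big(v_i\subseteq v_i\wedge v_{i+1}\subseteq\cdots\subseteq v_i\wedge\cdots\wedge v_{i+k-1}\big)$ with indices taken mod $n+k$. The goal is to identify the automorphism of this configuration space induced by $\rho=\delta^{k-1}$ with the cyclic shift $(v_1,\dots,v_{n+k})\mapsto(v_2,\dots,v_{n+k},(-1)^{k-1}v_1)$.

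The conceptual input is the rotational periodicity of the front. Since $\beta_{k,n}$ is periodic with period the single block $\sigma_1\cdots\sigma_{k-1}$, the front $\Pi_{xz}(\La(k,n))\subseteq S^1\times\R$ is carried to itself by rotation of the $S^1$-factor through one period, $2\pi/(n+k)$, and the K\'alm\'an loop $\rho=\delta^{k-1}$ is precisely the Legendrian loop that drags this first block once around the cyclic closure, implementing one step of that rotation up to Legendrian isotopy. To make this precise I would decompose $\rho$ into the $k-1$ elementary cyclic rotations $\delta$ of Subsection \ref{sub: loops defs}, each of which carries the leftmost crossing $\sigma_i w=w\sigma_i$ around the closure, and track the flag assignments through each move. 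The net effect is a cyclic relabeling of the regions by one step, which on the defining data is the shift $v_i\mapsto v_{i+1}$ and reproduces the cyclic shift on the first $n+k-1$ coordinates; the orientation convention of the loop fixes the direction of the shift.

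The main obstacle is the sign on the wrapped line: one must show that the line returning to the last region is $(-1)^{k-1}v_1$ rather than $v_1$. This is genuinely a statement about the framing data of $\FM(\La,T)$ and not merely about the underlying flags, arising because the marked-point trivialization carried by the strand of $v_1$ is transported past the $k-1$ sheets of one block. I would pin it down by passing to the initial weave filling and tracking the microlocal merodromy trivialization through the block with the parallel-transport formulas of Equations \ref{eq: merodromy1} and \ref{eq: merodromy2}, each crossing contributing one sign, or equivalently by checking that the shifted maximal minors $\det(v_i,\dots,v_{i+k-1})$ remain Pl\"ucker-consistent only with this factor. Two consistency checks confirm the outcome: the resulting automorphism has order $n+k$, agreeing both with the order of the cyclic shift computed above and with K\'alm\'an's computation of the order of $\rho$ on the augmentation variety \cite[Theorem 1.3]{Kalman}; and for $(k,n)=(4,4),(3,6)$ it recovers the identification of the K\'alm\'an loop with the cyclic shift established by Casals and Gao \cite{CasalsGao}.
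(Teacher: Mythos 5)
Your argument is essentially the paper's: the proof there likewise reads the statement directly off the flag description of $\FM(\La(k,n),T)\cong\Gr^\circ(k,n+k)$ and the definition of the K\'alm\'an loop as a cyclic rotation of the front given by $(\sigma_1\cdots\sigma_{k-1})^{n+k}$, observing that $\rho$ sends the flag $V_i$ to $V_{i-1}$ and hence acts as the desired shift on the lines $v_i$ up to sign. Your extra care in pinning down the $(-1)^{k-1}$ through the framing and merodromy data goes beyond the paper, which simply absorbs this into the phrase ``up to sign,'' but it does not change the route.
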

\begin{proof}
    The statement immediately from the description of the isomorphism between $\FM(\La(k, n), T)$ and $\Gr^\circ(k, n+k)$ and the definition of the K\'alm\'an loop in the front given by the braid word $(\sigma_1,\dots,\sigma_{k-1})^{n+k}$. Indeed, the induced action of $\rho$ sends the flag $V_i$ to the flag $V_{i-1}$, which, up to sign, is the desired map $v_i\mapsto v_{i-1}$ on 1-dimensional subspaces.
\end{proof}

To realize the braid group generators of $\SG'(\Gr^\circ(k, n+k))$ as Legendrian loops, we need to expand the class of loops we consider to include the following generalization of Definition \ref{def: theta loop}:
Let $d=\gcd(k,n)$, $m=\frac{k}{d}$, and assume that $d>1$. While the Coxeter projection $\pi: \Br_n \to S_n$ has no fixed points, it does admit a fixed set $\{i, i+d, \dots, i+(m-1)d\}$ of size $m$ where we consider indices $\mod k$. For an adjacent pair of strands, $i$, $i+1$ corresponding to a set of fixed orbits, $\La(k, n)$ -- presented as the $(-1)$-framed closure of $(\sigma_1\dots, \sigma_{k-1})^{n+k}$ for specificity -- is Legendrian isotopic to the satellite of a positive braid about these fixed strands; in other words $\La(k,n)\cong \La(\beta, \{i, \dots, i+(m-1)d\}, \gamma\}$ for some $\beta\in \Br_{k-1}^+$ and $\gamma\in \Br_2^+$, where we have slightly abused notation to replace the natural number $i$ in the definition from the previous section with the set of natural numbers $\{i, \dots, i+(m-1)d\}$. We define $\Sigma_i$ analogously to a $\vartheta$-loop of $\La(\beta, i, \gamma)$ as the isotopy given by the flow of the vector field in the open neighborhood of the strands $\{i, \dots, i+(m-1)d\}$. In the front given by the $(-1)$-closure of $(\sigma_1\dots, \sigma_{k-1})^{n+k}$, the loop $\Sigma_i$ can be represented by a sequence of Reidemeister III describing a single crossing orbiting the link $m$ times, as described below. 

Note that in the case of $d=k$, the fixed set of the Coxeter projection is simply $i$ and the $\Sigma_i$ loop coincides with a $\vartheta$-loop of $\La(\beta, i, \gamma)$. 

The following lemma describing the action of the $\Sigma_i$ loops involves a somewhat more detailed computation.

\begin{lemma}\label{lemma: sigma loop}
    The Legendrian loop $\Sigma_i$ 
    induces the cluster automorphism $\sigma_i$ of $\Gr^\circ(k, n+k)$ under the isomorphism between $\FM(\La(k, n), T)$ and $\Gr^\circ(k, n+k)$.
\end{lemma}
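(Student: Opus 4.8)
The plan is to mimic the direct flag-theoretic computation used for the Kálmán loop in Lemma \ref{lemma: Kalman loop}, now tracking how the distinguished crossing of $\Sigma_i$ deforms the flags as it is dragged around the front. First I would fix the front of $\La(k,n)$ as the $(-1)$-framed closure of $(\sigma_1\cdots\sigma_{k-1})^{n+k}$ and recall the isomorphism $\FM(\La(k,n),T)\cong\Gr^\circ(k,n+k)$, under which a point is recorded by the lines $v_1,\dots,v_{n+k}$ with the flag in the $i$th region given by $V_i=v_i\subseteq v_i\wedge v_{i+1}\subseteq\cdots\subseteq v_i\wedge\cdots\wedge v_{i+k-1}$. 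Since $\Sigma_i$ is presented as an explicit sequence of Reidemeister III moves carrying a single crossing $c$ once around the companion and thereby $m=k/d$ times around the front, and each elementary move alters the front only locally, it suffices to compute the induced change of flag data at a single move and then compose.

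The heart of the argument is the local model for one Reidemeister III move. Using the crossing condition of Proposition \ref{prop: STZ}(4) --- the commuting square with exact sequence $0\to S\to E\oplus W\to N\to 0$ --- I would show that pushing $c$ past a single strand replaces the line occupying the swept region by the unique line lying both in the span of the two lines meeting at $c$ and in the span of the strands already crossed. Concretely, the transversality conditions force the new vector into $\mathrm{span}\{v_i,v_{i+1}\}\cap\mathrm{span}\{v_2,\dots,v_{i+k}\}$, while the identification of cokernels at the crossing supplies the normalization $v_i\wedge v_{i+1}=v_{i+1}\wedge w_1$. This reproduces Equations \ref{eq: SigmaLoop} and \ref{eq: W1Def}; the window of admissible strands has width $k$ precisely because a flag records only $k$ consecutive lines, which is the source of the span condition in \ref{eq: W1Def}.

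With the single-step model in hand I would assemble the full loop. One traversal of the companion carries $c$ past the strands of a block and realizes the transposition-with-modification of \ref{eq: SigmaLoop} on $(v_i,v_{i+1})$; iterating the local step as $c$ completes its $m$-fold orbit extends the braiding to every block and reproduces the $d$-periodic extension of Fraser's definition, adding multiples of $d$ to the indices as required. The degenerate case $d=k$, where the companion orbit is a single strand, recovers the ordinary $\vartheta$-loop of Definition \ref{def: theta loop}, so the computation specializes to the analogous (and simpler) statement there.

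I expect the main obstacle to be the bookkeeping in the local step and its accumulation: verifying that after each Reidemeister III move the produced vector satisfies the wedge identity and the span condition simultaneously, and that the signs introduced at the crossings together with the cyclic reindexing over a full orbit accumulate to the correct overall normalization --- in particular compatibility with the sign $(-1)^{k-1}$ of the cyclic shift $\rho$. Controlling the orientations and the marked-point framings throughout the $m$-fold orbit, so that the resulting flag assignment is literally equal to, rather than merely projectively equivalent to, Fraser's $\sigma_i$, is where the genuine work lies.
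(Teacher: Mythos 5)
Your proposal follows essentially the same route as the paper's proof: decompose $\Sigma_i$ into Reidemeister III moves in the front given by $(\sigma_1\cdots\sigma_{k-1})^{n+k}$, observe that each move only creates one new region whose assigned line is pinned down by the singular support conditions as lying in the intersection of the adjacent two-dimensional and $(k-1)$-dimensional subspaces, and match this with the defining conditions of Fraser's $w_j$ before extending $d$-periodically over the $m$-fold orbit. The local model, the identification with Equations \ref{eq: SigmaLoop} and \ref{eq: W1Def}, and the block-by-block assembly all coincide with the paper's argument.
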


\begin{proof}
    Let $d=\gcd(k, n+k)$ and consider the induced action of the Legendrian loop $\Sigma_i$ on $\FM(\La(k,n))$. Following the computations in Sections 4 and 5 of \cite{CasalsGao}, we decompose the Legendrian isotopy $\Sigma_1$ into a sequence of Reidemeister III moves.

    We restrict our consideration of the front to the first $d(k-1)$ crossings in the front projection $\Pi_{xz}(\La((\sigma_1\dots\sigma_{k-1})^{n+k}))\subseteq J^1\S^1$ and extend by symmetry to the remaining $(n+k-d)(k-1)$ crossings. We label the strands from bottom to top at their left endpoint. Observe that in the first $k(k-1)$ crossings of the initial front projection any fixed pair of strands labeled $i$ and $i+1$ cross exactly twice. In the braid $(\sigma_1\dots, \sigma_{k-1})^k$, these two crossings are exactly the $i$th $\sigma_1$ and the $i+1$st $\sigma_{k-1}$ with indices taken modulo $k$.

The $\Sigma_i$ loop is then given by a sequence of Reidemeister III moves that take the $i$th $\sigma_1$ crossing and pass it through the other strands until we obtain a $\sigma_i$ as the leftmost crossing. Repeating this process symmetrically for each block of $d(k-1)$ crossings and performing a cyclic rotation then produces a $\sigma_{i}$ crossing at the right of the first block of $d(k-1)$ crossings. If $d=k$, then we can perform another series of Reidemeister III moves to pass this crossing through the other strands until we encounter the $i-1$th $\sigma_{k-1}$ crossing. We then perform a sequence of Reidemeister III moves to pass the $\sigma_{k-1}$ crossing through the remaining strands until we arrive back at the front that we started with. If $d<k$, then after passing the $\sigma_{k-1}$ crossing through the remaining strands, we obtain $\sigma_{i+d}$ as the leftmost crossing. We perform another cyclic shift and repeat this process $m$ times in total for $m=\frac{k}{d}$. 

The induced action on $\FM(\La(k,n), T)$ can be described as follows. We can view each Reidemeister III move as transferring a region of the front diagram from the right of a strand to the left of the same strand, fixing all the remaining regions. Under the natural identification of sheaves before and after this isotopy, the vector spaces corresponding to the fixed regions are all preserved and we need only compute the vector space corresponding to the new region. Following our description of $\FM(\La((\sigma_1\dots \sigma_{k-1})^{n+k}), T)$ above, the vector space assigned to the new region is determined in the cases where it is of dimension at least two. In the case where the region appears between a pair of $\sigma_1$ crossings and the assigned vector space $W_j$ has rank one, we determine $W_j$ as follows. By construction, a new region $W_j$ appears whenever we replace the $i+jd$th $\sigma_1$ of $(\sigma_1\dots \sigma_{k-1})^k$ with the $i+jd+1$th $\sigma_{k-1}$ crossing for $j\in \{1, \dots, m-1\}$. This has the effect of replacing $v_{i+jd}$ by $v_{i+jd + 1}$ and $v_{i+jd+ 1}$ by $W_j$ while preserving $v_{i+jd+2}$. The singular support conditions then imply that $W_j$ lies in the two-dimensional subspace $V_{i+jd}^{(2)}$ assigned to the region northwest of it. Similarly, it must also lie in the $k-1$-dimensional subspace $V_{i+jd+1}^{(k-1)}$ that is assigned to the unique upper region directly to the northeast of the $W$ region. Therefore, the span of $W$ is determined by the intersection $V_{i+jd}^{(2)}\cap V_{i+jd+1}^{(k-1)}$ and the particular choice of framing data can be given by the identification of $V_{i+jd}^{(2)}$ with the span of $v_i$ and $W_i$, as required by the singular support conditions. Up to indexing, these are equivalent to the conditions given by Equation \ref{eq: W1Def} determining $w_1$. Thus, the induced action of $\Sigma_i$ precisely matches Equation \ref{eq: SigmaLoop}, as desired.
\end{proof}

\section{Cluster modular groups from folding $\FM(\La)$}\label{Sec: Folding} 
In this section, we study Legendrian loops and Legendrian links that exhibit symmetry under certain finite group actions. These correspond algebraically to folded cluster algebras, skew-symmetrizable cluster algebras that are constructed by identifying symmetries of quivers that respect cluster mutation. The process of folding a skew-symmetric cluster algebra is somewhat delicate in general, as symmetries of a quiver are not necessarily preserved by mutation. In this section, we discuss the algebraic process of folding a cluster algebra by a $G$-action and then give a contact-geometric description of it in terms of $G$-invariant Legendrian links and their $G$-fillings. We then produce generators of cluster modular groups of the folded cluster algebras using our results from Section \ref{sec: CMGs}.

\subsection{Folding clusters via G-actions} \label{sub: folding}

We begin with the properties of a foldable quiver. See \cite[Section 4.4]{FWZ2} or \cite[Section 2.3]{ABL22} for more details. Let $Q$ be a quiver and $G$ be a finite group action with $g\cdot Q=Q$ for all $g\in G$. We write $i\sim i'$ for vertices $v_i$ and $v_{i'}$ lying in the same $G$-orbit and denote by $\#\{i \rightarrow j\}$ the number of edges from $v_i$ to $v_j$. 
\begin{definition}
  The quiver $Q$ is $G$-admissible if the following conditions hold:
  \begin{enumerate}
      \item If $i\sim i'$, then $i$ and $i'$ are either both mutable or both frozen.
      \item For all $i \sim i'$ and any $g\in G$, we have $\#\{i \rightarrow j\}=\#\{g\cdot i \rightarrow g\cdot j\}$.
      \item  For all $i \sim i'$, we have $\#\{i \rightarrow i'\}=0$.
      \item For all mutable $i \sim i'$, we have $\#\{i \rightarrow j\}\#\{i'\rightarrow j\}\geq 0$.
  \end{enumerate}
\end{definition}

Let $I$ be the $G$-orbit of a vertex $v_i$. We denote by $\#\{I \rightarrow J\}$ the sum $\sum_{i\in I} \#\{i\rightarrow j\}$ for some arbitrary $j\in J$. To package the data $\#\{i\rightarrow j\}$ as part of a graph, we produce a weighted quiver. That is, given a $G$-admissible quiver $Q$, we consider the weighted graph $Q^G$ whose vertices are $G$-orbits of vertices in $Q$ where $v_i$ and $v_j$ have an edge of weight $\#\{I \rightarrow J\}$ between. Note that the weighting is distinct from multiplicity, as $\#\{I \rightarrow J\}\neq -\#\{J\rightarrow I\}$ in the folded case. 

Alternatively, we can define a matrix $\tilde{B}^G$ with entries $b_{ij}^G=\sum_{i\in I} b_{ij}$ where $b_{ij}$ is the $(i,j)$ entry of the original exchange matrix $B$. The mutable part of the exchange matrix $\tilde{B}^G$ is skew-symmetrizable, i.e. there is some diagonal matrix $D$ with positive integer entries such that $B^GD$ is skew-symmetric. Here the $j$th nonzero entry of $D$ is given by the size of the orbit of the quiver vertex $v_j$. 

Given a $G$-admissible quiver $Q$ and a mutable $G$-orbit $I$, we can consider a sequence of mutations $\mu_I=\prod_{i\in I}\mu_i$. When both $Q$ and $\mu_I(Q)$ are $G$-admissible, we have that $\mu_I(Q^G)=\mu_I(Q)^G$ where mutation of the folded quiver $Q^G$ is defined as above, replacing $b_{ij}$ by $b_{ij}^G$. We call a quiver $Q$ with a $G$-action globally foldable with respect to $G$ if $Q$ is $G$-admissible and, for any sequence of mutable $G$-orbits $I_1, \dots I_k$, the quiver $\mu_{I_k}\dots \mu_{I_1}(Q)$ is $G$-admissible as well.

 Let $(\bf{a}, Q)$ be a globally foldable seed with respect to a $G$-action. Recall that for $\bf{a}=(a_1, \dots, a_m)$, we denote by $\mathcal{F}$ the field of rational functions in $m$ variables. Let $\mathcal{F}^G$ be the field of rational functions in $m_G$ variables, where $m_G$ denotes the number of $G$-orbits of vertices $v\in Q$, equivalently the number of vertices in the weighted quiver $Q^G$.  

\begin{definition}
 Given a cluster seed $({\bf a}, Q)$, the data of a folded seed is given by the weighted quiver $Q^G$ and the variables $a_{I_1}, \dots a_{I_{m_{g}}}$ assigned to corresponding vertices.   
\end{definition}

The key consequence of the globally foldable condition is that such a definition allows us to mutate at any mutable element in a folded seed to obtain another folded seed. The collection of cluster variables given by arbitrary mutations of the folded seed then defines the folded cluster algebra.

All of the cluster algebras in Table \ref{tab: folded} are globally foldable with respect to the indicated $G$-action. See e.g. \cite[Chapter 5]{FWZ2} for a thorough description of the finite type cases. For affine type, the globally foldable condition is verified by An and Lee \cite[Corollary 3.8]{AnLee22}. For extended affine type, the folded cluster algebras we consider in this work are `exotic' foldings of $E_7^{(1, 1)}$ and $E_8^{(1, 1)}$ of Kauffman and Greenberg, identified in \cite{KaufmanGreenberg}. The folding of $E_7^{(1, 1)}$ is by an involution, which implies global foldability by  
\cite[Lemma 2.8]{kaufman2023special}. For $\La(E_8^{(1, 1)})$, we observe that the $G_2^{(3, 3)}$ quiver obtained by Kaufman and Greenberg has only two distinct mutation classes, allowing direct verification of global foldability. The cluster algebras, quivers, and $G$-actions relevant to this work are given in Figure \ref{fig: FoldingQuivers} below.

	\begin{center}
		\begin{figure}[h!]{ \includegraphics[width=.9\textwidth]{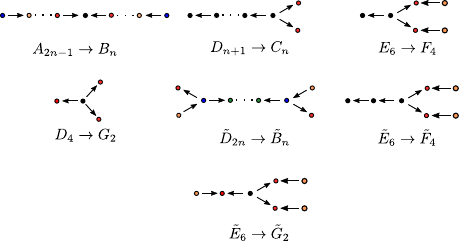}}\caption{Quivers of finite and affine type. $G$-actions giving the specified foldings are realized by identifying vertices of the same color.}
			\label{fig: FoldingQuivers}\end{figure}
	\end{center}

\subsection{$G$-actions on Legendrian links}\label{sub: G-actions}

In this subsection, we give a description of the group actions realizing folding as certain contactomorphisms. Consider a finite group $G$ action on $(J^1\S^1, \xi_{\st})$ by contactomorphisms and 
suppose that the $G$-action extends to the symplectization $\Symp(J^1\S^1, \xi_{\st})$ by exact or anti-exact symplectomorphisms. Given such a group action, we give the following definition.

\begin{definition}
    An exact Lagrangian $G$-filling is an exact Lagrangian filling of a $G$-invariant Legendrian link $\La$ such that the $G$ action on $\Symp(J^1\S^1, \xi_{\st})$ fixes $L$ and the induced action on the boundary contact manifold fixes $\La$. 
\end{definition}

Casals considered these $G$-fillings in his conjectural classification of exact Lagrangian fillings of Legendrian links 
corresponding to Dynkin types $B_n, C_n, F_4$ and $G_2$ \cite[Conjecture 5.4]{CasalsLagSkel}. An, Bae, and Lee subsequently proved the existence of the conjectured number of $G$-fillings for these links \cite[Theorem 1.4]{ABL22}.

We now define a pair of specific $G$-actions that we later show correspond to classical folding of cluster algebras of finite and affine type.

\textbf{Rotation:} For a Legendrian link $\La$ in $(J^1\S^1, \xi_{\st})$, a translation in the $\theta$ coordinate of the $\S^1$ factor induces an action on $\La$. For any exact Lagrangian filling of $\La$ obtained as the Lagrangian projection of a weave $\w\subseteq (J^1\D^2, \xi_{\st})$, the translation action naturally extends to a rotation of the base $\D^2$, inducing an action on $L(\w)$. We denote a rotation through an angle of $\theta$ by $R_{\theta}$.

\textbf{Involution:} 
Consider an anti-symplectic involution of the symplectization $(\R_r\times J^1\S^1, d(e^r\alpha))$ for $\alpha=dr-p_\theta d\theta$ --- thought of as $T^*\D^2$ --- given by $p_\theta\mapsto -p_\theta, p_r\mapsto -p_r$ and fixing the $r$ and $\theta$ coordinates. In the contactization, $(J^1D^2, dz-e^r\alpha)$ we can lift this antisymplectic involution to a contactomorphism by $z\mapsto -z$. We denote this involution by $\iota$.

Note that for any Lagrangian $L$ fixed by the antisymplectic involution $\iota$, we still have $\omega_{\st}|_{\iota^*(L)}=-\omega_{\st}|_{L}=0$. Likewise, if $L$ is exact, then we have 
\begin{equation*}
    \alpha_{\st}|_{\iota^*(L)}=-\alpha_{\st}|_{L}=-dz|_{L}
\end{equation*} for some function $z$ on $L$. As a result, the image of an exact Lagrangian under the antisymplectic involution is again an exact Lagrangian. Similarly, the image of a Legendrian under the contactomorphism $\iota$ remains Legendrian.

Following \cite{ABL22}, we can produce $\iota$-invariant Legendrian surfaces by presenting them as `degenerate' weaves with symmetry. In the front projection, $\iota$ is given by $z\mapsto -z, \theta\mapsto \theta$ and $r\mapsto r$, so that an $\iota$-invariant filling is one which is represented by a weave that is symmetric through a reflection of the $r\theta$-plane. These weaves generally have non-generic fronts where the singular locus of the projection $\Pi:J^1\D^2\to \R_z\times D^2$ is given by overlapping $A^2_1$ singularities. By construction, these non-generic singularities are represented by overlapping edges labeled by $\sigma_i$ and $\sigma_j$ with $|i-j|\geq 2$. The resulting Legendrian weave is nevertheless a non-singular embedded surface in $J^1\D^2$ as the overlapping edges represent singular loci of the front projection that occupy distinct $z$-coordinates. See also \cite[Section 8.1]{CLSBW23}
 for an explicit description of the induced action of $\iota$ on the flag moduli $\FM(\La)$. 

\subsection{Cluster structures on $\FM(\La(\beta))^G$}\label{sub: folding sheaves}

In this subsection we obtain a folded cluster algebra from the sheaf moduli of certain $G$-invariant Legendrians. Denote by $\FM(\La, T)^G$ and $\SM_1(\La, T)^G$ the $G$-invariant moduli stacks of sheaves. More concretely, these moduli are the collection of $G$-invariant flags satisfying the transversality conditions described in Section \ref{sec: sheaves} together with $G$-invariant framing data. Note that the $G$-invariance of framing data requires that the set of marked points $T$ must be $G$-invariant as well. We claim that for certain $G$-actions, these moduli stacks admit skew-symmetrizable cluster structures with contact-geometric descriptions.

  In pursuit of this claim, we restrict the class of Legendrian links we consider in this subsection to the case of rainbow closures $\La(\beta)$. We do so in order to satisfy the hypotheses of \cite[Theorem 1.1]{CasalsGao23}, which allows us to realize any cluster mutation as a Lagrangian disk surgery at some $\mathbb{L}$-compressing cycle. As observed by the authors (see Remark 4.9 and Section 4.10 of  loc.~cit.), their main results likely hold for more general families of Legendrian links as well.

\subsubsection{Lattices from $G$-fillings}
We now define a collection of lattices comprising the fixed data of a cluster ensemble of skew-symmetrizable cluster algebras, following 
the exposition given in \cite[Section 2]{GHK15}. First, consider a lattice $N$ with a skew-symmetric bilinear form. The lattice $N$ contains an unfrozen sublattice $N_{\uf}$, which is a saturated sublattice of $N$. For skew-symmetrizable cluster structures, there is an additional sublattice $N^\circ$ of $N$ that is of finite-index in $N$. We also have dual lattices $M:= \Hom(N, \Z)$ and $M^\circ :=\Hom(N^\circ, \Z)$ where $M$ is necessarily a finite-index sublattice of $M^\circ$. The lattice $N$ is the character lattice for the $\SX$ cluster tori, while the lattice $M^\circ$ gives the character lattice for the $\SA$ cluster tori. The toric coordinates corresponding to the basis vectors of the corresponding lattices yield cluster variables of the appropriate cluster charts.

Now let $\La(\beta)$ be a Legendrian link such that the ring of regular functions $\C[\FM(\La(\beta), T)]$ is a globally foldable cluster algebra. We first consider the lattices of $G$-orbits of elements of $H_1(L, T)$ and $H_1(L\backslash T, \La \backslash T)$, which we denote by $H_1(L, T)^G$ and $H_1(L\backslash T, \La \backslash T)^G$. The lattices $H_1(L, T)^G$ and $H_1(L\backslash T, \La \backslash T)^G$ are not dual, rather, each lattice is contained in the dual of the other. Following the formulation for skew-symmetrizable cluster varieties described above, we designate these dual lattices $M^\circ(\La)=\Hom((H_1(L, T)^G), \Z)$ and $N(\La)=\Hom(H_1(L\backslash T, \La\backslash T)^G), \Z)$ as the character lattices of the cluster charts we construct. Analogous to the construction of Casals and Weng given in \cite[Section 3.8]{CasalsWeng}, the intersection pairing on $H_1(L, T)^G$ induced from $H_1(L, T)$ yields the required skew-symmetric form on the lattices.

\subsubsection{Mutation of $G$-fillings}
 
We say that a collection $\{\gamma_i\}_{i=1}^n$ of $\mathbb{L}$-compressing cycles are simultaneously mutable if mutations at any two of the cycles $\gamma_i, \gamma_{i'}$ in the collection pairwise commute.

\begin{lemma}\label{lemma: simultaneously mutable}
   Let $G$ be a group acting by exact (anti)-symplectomorphisms on $\Symp(J^1\S^1, \xi_{\st})$. For any $G$-filling $L$ of $\La(\beta)$ with a $G$-admissible intersection quiver, all $\mathbb{L}$-compressing cycles in $H_1(L)$ belonging to the same mutable $G$-orbit are simultaneously mutable and mutating at every cycle in a $G$-orbit produces another $G$-filling.
\end{lemma}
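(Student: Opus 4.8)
The plan is to separate the two assertions — pairwise commutativity of the mutations (simultaneous mutability) and $G$-invariance of the resulting filling — and to treat each geometrically, using the $G$-admissibility hypothesis to control intersections and the equivariance of Lagrangian disk surgery under the $G$-action. Throughout I use that Legendrian mutation at an $\mathbb{L}$-compressing cycle $\gamma$ is Lagrangian disk surgery at the disk $D_\gamma$ it bounds, and that by \cite[Theorem 1.1]{CasalsGao23} every cluster mutation of $\FM(\La(\beta))$ is realized this way.

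First I would record the relevant consequence of $G$-admissibility. If $\gamma_i$ and $\gamma_{i'}$ are $\mathbb{L}$-compressing cycles whose intersection-quiver vertices lie in the same mutable $G$-orbit, then condition (3), applied to both orderings of the symmetric relation $i\sim i'$, forces $\#\{i\to i'\}=\#\{i'\to i\}=0$, so that $\langle \gamma_i,\gamma_{i'}\rangle=0$. I would then upgrade this algebraic statement to geometric disjointness of the bounding Lagrangian disks $D_i$ and $D_{i'}$: for the rotation action $R_\theta$ the cycles of a single orbit are supported in pairwise disjoint angular sectors of the base $\D^2$ of the weave, while for the involution $\iota$ the cycle and its image are reflections across the fixed locus with vanishing intersection and thus admit disjoint representatives. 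With disjoint disks the two surgeries are supported in disjoint Weinstein neighborhoods, so $\mu_{\gamma_i}$ and $\mu_{\gamma_{i'}}$ commute; this exhibits every mutable $G$-orbit as a simultaneously mutable collection, consistent with the purely cluster-theoretic fact that mutations at non-adjacent quiver vertices commute.

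Next, for the $G$-filling assertion, fix a mutable $G$-orbit $I=\{i_1,\dots,i_r\}$ and set $\mu_I=\mu_{\gamma_{i_1}}\circ\cdots\circ\mu_{\gamma_{i_r}}$, which is well-defined independently of the order by the preceding step. Because $G$ acts by exact or anti-exact symplectomorphisms, each $g\in G$ carries Lagrangian disks to Lagrangian disks — the anti-exact case being covered by the computation in Subsection \ref{sub: G-actions} that $\iota$ preserves exactness — and $g$ permutes the cycles of the orbit $I$. Lagrangian disk surgery is therefore natural under $g$, i.e.\ $g\circ \mu_{\gamma_{i_j}}=\mu_{g\gamma_{i_j}}\circ g$, and applying this termwise gives $g\circ\mu_I=\mu_{g(I)}\circ g=\mu_I\circ g$, where I used $g(I)=I$ together with the order-independence established above. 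Since $L$ is a $G$-filling we have $g(L)=L$, whence $g(\mu_I(L))=\mu_I(g(L))=\mu_I(L)$ for every $g\in G$; as Lagrangian disk surgery again produces an exact Lagrangian filling, $\mu_I(L)$ is a $G$-filling.

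The main obstacle I anticipate is the upgrade from the algebraic identity $\langle\gamma_i,\gamma_{i'}\rangle=0$ to genuine disjointness of the disk supports, which is what is actually needed for the geometric surgeries to commute: zero algebraic intersection does not force zero geometric intersection in general, so this step must be argued separately for the two $G$-actions using the explicit symmetric (possibly degenerate) weave presentations of Subsection \ref{sub: G-actions}. A secondary point to verify carefully is the naturality of Lagrangian disk surgery under anti-exact symplectomorphisms, since the orientation-reversing behaviour of $\iota$ requires checking that the surgery disk and its framing transform correctly; this is precisely where the exactness computation for $\iota$ is invoked.
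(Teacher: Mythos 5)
Your overall skeleton matches the paper's: use $G$-admissibility condition (3) to get $\langle\gamma_i,\gamma_{i'}\rangle=0$ for cycles in the same orbit, upgrade this to a geometric statement so the two disk surgeries commute, and then deduce $G$-invariance of the mutated filling from equivariance of the surgery. The second half of your argument (the naturality identity $g\circ\mu_{\gamma}=\mu_{g\gamma}\circ g$ combined with $g(I)=I$ and order-independence) is essentially the paper's argument in cleaner form; the paper phrases the same point as the observation that the relevant Hamiltonian isotopy is a local move and can therefore be performed $G$-invariantly near each cycle of the orbit.

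The gap you flag, however, is real, and your proposed way of closing it is not how the paper does it and would not go through as stated. You want to argue disjointness case by case (disjoint angular sectors for $R_\theta$, reflection symmetry for $\iota$), but for the involution zero algebraic intersection of a cycle with its mirror image does not by itself produce disjoint representatives, and even genuinely disjoint cycles on $L$ do not automatically give disjoint bounding \emph{disks} in the complement, which is what your "disjoint Weinstein neighborhoods" step actually requires. The paper instead invokes \cite{CasalsGao23}: there is a Hamiltonian isotopy of $L$ after which the geometric intersection numbers of cycles in $H_1(L)$ agree with the algebraic ones, so condition (3) directly yields geometrically disjoint cycles, and condition (4) of $G$-admissibility ($\#\{i\to j\}\#\{i'\to j\}\geq 0$) is then used to check that after mutating at one cycle the two remain disjoint and the adjacent cluster variables of the other are unaffected — a point your argument omits entirely. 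The locality of that same Hamiltonian isotopy is also what makes it implementable $G$-equivariantly, which is the input your naturality identity silently assumes (the disks and neighborhoods must be chosen so that $g(D_{\gamma_i})=D_{g\gamma_i}$). So the missing ingredient is not a bespoke symmetry argument for each $G$-action but the general algebraic-equals-geometric intersection result of \cite{CasalsGao23} together with its local character.
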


\begin{proof}
Let $L$ be a $G$-filling of $\La(\beta)$ with a $G$-admissible intersection quiver. The $G$-admissible condition implies that any two cycles $\gamma_i$ and $\gamma_{i'}$ belonging to the same $G$-orbit have algebraic intersection number $\langle \gamma_i, \gamma_{i'}\rangle=0$. By \cite{CasalsGao23}, there is a Hamiltonian isotopy of $L$ so that the geometric intersection of cycles in $H_1(L)$ matches the algebraic intersection, ensuring that each $\mathbb{L}$-compressing cycle can be mutated without creating immersed cycles. Moreover, the condition that $\#\{i\to j\}\#\{i'\to j\}\geq 0$ implies that after mutation at either $\gamma_i$ or $\gamma_{i'}$, the two cycles do not intersect and mutation at one does not affect any of the cluster $A$-variables adjacent to the other. Therefore, the two mutations commute, as desired. In particular, this implies that for any two cycles $\gamma_i$ $\gamma_{i'}$ belonging to the same $G$-orbit, we can successively perform the Lagrangian disk surgeries $\mu_{\gamma_i}$ and $\mu_{\gamma_{i'}}$, and either ordering produces Hamiltonian isotopic fillings of $\La(\beta)$.

To verify the part of the statement involving producing additional $G$-fillings, we observe that the Hamiltonian isotopy of $L$ given in \cite{CasalsGao23} to ensure that geometric intersections match algebraic intersections is a local move. In particular, this allows us to perform these local Hamiltonian isotopies in a neighborhood of any $\gamma_i$ in the same $G$-orbit in order to produce a $G$-invariant Hamiltonian isotopy. The result of mutating at each of the cycles in the $G$-orbit then produces another $G$-filling.  
\end{proof}

\begin{remark}
    Note that while mutation at every cycle in a $G$-orbit produces another $G$-filling, it does not necessarily preserve $G$-admissibility, which is necessary if we wish to perform additional mutations. As explained in Subsection \ref{sub: folding}, $G$-admissibility is only
    preserved under an arbitrary sequence of mutations of $G$-orbits if the cluster algebra is globally foldable with respect to the $G$-action.
\end{remark}

\subsubsection{Folding $\FM(\La(\beta)$}

We now proceed with a description of cluster ensembles given by $G$-invariant sheaf moduli.

\begin{theorem}\label{thm: folded cluster}
Suppose that $\C[\FM(\La(\beta))]$ is a globally foldable cluster algebra with respect to a group $G$ acting by exact (anti)-symplectomorphisms on $\Symp(J^1\S^1, \xi_{\st})$ and assume that there is an initial $G$-filling $L$ with a $G$-admissible intersection quiver. Then the pair of moduli, $\FM(\La(\beta))^G$ and $\SM_1(\La(\beta))^G$ form a cluster ensemble with every seed induced by a $G$-filling.   
\end{theorem}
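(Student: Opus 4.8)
The plan is to realize $\FM(\La(\beta))^G$ and $\SM_1(\La(\beta))^G$ as the $G$-fixed loci of the Casals--Weng cluster structure on $\FM(\La(\beta))$ and $\SM_1(\La(\beta))$, and to identify the induced charts and mutations with the folded seeds of Subsection \ref{sub: folding}. First I would produce the initial folded chart. Applying \cite[Theorem 1.1]{CasalsWeng} to the initial $G$-filling $L$, the microlocal merodromies and monodromies along the $\mathbb{L}$-compressing cycles of $L$ give toric coordinates $\{a_i\}$ and $\{x_i\}$ on charts of $\FM(\La(\beta))$ and $\SM_1(\La(\beta))$. The $G$-action permutes these cycles via its action on $H_1(L,T)$ and $H_1(L\backslash T, \La\backslash T)$, hence permutes the coordinates, so that on the fixed locus $a_i$ and $a_{g\cdot i}$ agree for $i\sim i'$ (this is where the exact versus anti-exact dichotomy enters, contributing only signs or inversions to be absorbed into the orbit functions). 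A single function $a_I$ therefore survives per mutable $G$-orbit $I$; these are the folded cluster variables, and they are toric coordinates on the $G$-invariant subtorus whose character lattices are exactly the orbit lattices $M^\circ(\La)$ and $N(\La)$ defined above.

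Next I would match the mutation structure. By Lemma \ref{lemma: simultaneously mutable}, the cycles in a mutable $G$-orbit $I$ are simultaneously mutable and the composite surgery $\mu_I=\prod_{i\in I}\mu_i$ produces another $G$-filling; since those cycles are mutually non-intersecting after the Hamiltonian isotopy of \cite{CasalsGao23}, the ambient cluster mutations $\mu_i$ commute and their product restricts to a single birational map on the fixed locus. The heart of the argument is to verify that this restricted map is the skew-symmetrizable exchange attached to the folded matrix $\tilde{B}^G$ with $b^G_{IJ}=\sum_{i\in I} b_{ij}$: one multiplies the ambient exchange relations $a_i a_i' = \prod_{b_{ji}>0} a_j^{b_{ji}} + \prod_{b_{ji}<0} a_j^{-b_{ji}}$ over $i\in I$ and evaluates on the fixed locus, where $G$-admissibility (the conditions $\#\{i\to i'\}=0$ and $\#\{i\to j\}\#\{i'\to j\}\ge 0$) forces the monomials to collapse to the folded exchange monomials with no cross-terms between cycles in the same orbit. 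This is the geometric incarnation of the identity $\mu_I(Q^G)=\mu_I(Q)^G$ from Subsection \ref{sub: folding}, and the intersection pairing on $H_1(L,T)^G$ supplies the skew-symmetric form on the orbit lattices.

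Finally I would iterate and assemble. Since $\C[\FM(\La(\beta))]$ is globally foldable with respect to $G$, any sequence $\mu_{I_k}\cdots\mu_{I_1}$ of orbit mutations keeps the intersection quiver $G$-admissible, so Lemma \ref{lemma: simultaneously mutable} applies at every stage and each resulting filling is again a $G$-filling with a $G$-admissible quiver. Realizing each abstract folded mutation by the corresponding simultaneous Lagrangian disk surgery, available by \cite[Theorem 1.1]{CasalsGao23} for the rainbow closures $\La(\beta)$ under consideration, shows that every folded seed is induced by an honest $G$-filling. Gluing the resulting toric charts by the folded exchange birational maps yields the pair of schemes $\FM(\La(\beta))^G$ and $\SM_1(\La(\beta))^G$ with fixed data $(N(\La), M^\circ(\La))$, i.e. a cluster ensemble in the skew-symmetrizable sense of \cite{GHK15}.

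The main obstacle I anticipate is the matching in the second step: confirming that the scheme-theoretic restriction of the ambient mutation to the $G$-fixed locus coincides on the nose with the folded exchange relation for $\tilde{B}^G$, and that the fixed locus is genuinely exhausted by (Zariski-dense in) these invariant charts rather than acquiring extra components or stacky fixed-point contributions. Controlling the signs and possible inversions introduced by the anti-exact symplectomorphisms in the coordinate identifications $g^*a_i = a_{g\cdot i}$, and checking that they do not obstruct the collapse to the folded monomials, is the technical crux.
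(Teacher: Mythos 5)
Your proposal is correct and follows essentially the same route as the paper: identify the folded cluster coordinates as the orbit-wise equal microlocal merodromies/monodromies on the $G$-fixed locus (the paper phrases this as the surjective semifield homomorphism in the definition of folding), then combine global foldability with Lemma \ref{lemma: simultaneously mutable} to realize every folded mutation as a simultaneous Lagrangian disk surgery producing another $G$-filling. Your second step, explicitly multiplying the exchange relations over an orbit and checking the collapse to the folded monomials, is a more detailed account of a verification the paper leaves implicit, but it is the same argument.
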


\begin{proof}
    Let $\La(\beta)$ be a Legendrian link such that the ring of regular functions $\C[\FM(\La(\beta), T)]$ is a globally foldable cluster algebra. Following the formulation of the character lattices above, the cluster $\SA$-coordinates of the cluster tori corresponding to the lattice $N(\La)$ are given by $G$-invariant microlocal merodromies. Indeed, by the $G$-invariance of $\FM(\La)^G$, the flags and the framing data involved in defining the microlocal merodromy of any cycles belonging to the same $G$-orbit are necessarily equal. As a result, taking the $G$-invariant part of $\FM(\La)$ induces the surjective semifield homomorphism appearing in the definition of folding given in Subsection \ref{sub: folding}. An analogous argument holds for the cluster $\SX$-coordinates coming from $G$-equivariant microlocal monodromies.
The result is a cluster ensemble structure on the pair $\FM(\La)^G$ and $\SM_1(\La)^G$.

To show that every cluster chart of $\FM(\La)^G$ is induced by a $G$-filling, we note that the globally foldable condition implies that mutation commutes with folding. Therefore, we can apply Lemma \ref{lemma: simultaneously mutable} to realize the cluster mutations of the folded cluster algebra as Lagrangian disk surgeries at each element of a $G$-orbit of $\mathbb{L}$-compressing cycles of a $G$-filling. The lemma also implies that the result of performing these Lagrangian disk surgeries at each cycle in the $G$-orbit is again a $G$-filling. Thus, every cluster chart of the folded cluster algebra $\FM(\La)^G$ is induced by some $G$-filling.   
\end{proof}

\begin{remark}
    In forthcoming joint work with Agniva Roy, we give another contact-geometric interpretation of skew-symmetrizable cluster ensembles as the sheaf moduli of particular twist-spun Legendrian tori in $(\R^5, \xi_{\st})$ (see \cite{EkholmKalman} for the initial construction). In other words, studying the mapping torus of the Legendrian loop realizing the above group actions yields another contact-geometric interpretation of exact Lagrangian $G$-fillings of Legendrian links equipped with $G$-actions. 
\end{remark}

We conclude this subsection by establishing the existence of skew-symmetrizable cluster structures on $\FM(\La)^G$ for every $(\La, G)\in \mathcal{H}^G$. See Table \ref{tab: folded} for a compilation of the specific Legendrian links and $G$-actions. 

\begin{proposition}
For every $(\La, G)\in \mathcal{H}^G$, the ring of regular functions $\C[\FM(\La)^G]$ of the $G$-invariant sheaf moduli of $\La$ is a skew-symmetrizable cluster algebra.    
\end{proposition}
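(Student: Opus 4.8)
The plan is to deduce the statement from Theorem \ref{thm: folded cluster} by verifying, for each pair $(\La, G)\in \mathcal{H}^G$, its two hypotheses: that $\C[\FM(\La)]$ is globally foldable with respect to $G$, and that there is an initial $G$-filling whose intersection quiver is $G$-admissible. Once both hold, Theorem \ref{thm: folded cluster} produces the cluster ensemble $(\FM(\La)^G, \SM_1(\La)^G)$, and the $\SA$-side coordinate ring $\C[\FM(\La)^G]$ is a cluster algebra whose folded exchange matrix $\tilde B^G$ is skew-symmetrizable by the diagonal matrix of $G$-orbit sizes, as recorded in Subsection \ref{sub: folding}. This yields the claim directly.

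First I would settle global foldability. Every $\La\in \mathcal{H}^G$ is a rainbow closure $\La(\beta)$ of the Dynkin-type braid listed in the introduction, so by \cite[Theorem 1.1]{CasalsWeng} the ring $\C[\FM(\La)]$ is a skew-symmetric cluster algebra of the corresponding finite or affine type. Global foldability with respect to the stated $G$-action is then exactly the input recalled in Subsection \ref{sub: folding}: for the finite types $A_{2n-1}, D_n, D_4, E_6$ this is \cite[Chapter 5]{FWZ2}, and for the affine types $\tilde D_{2n}, \tilde D_4, \tilde E_6, \tilde E_7$ it is \cite[Corollary 3.8]{AnLee22}. In each case the relevant exchange matrix, together with the $G$-action identifying quiver vertices of like color, is one of those depicted in Figure \ref{fig: FoldingQuivers}.

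The geometric content lies in the second hypothesis. For each $(\La, G)$ I would exhibit a $G$-invariant front, as compiled in Table \ref{tab: folded}, whose associated initial Casals--Zaslow weave $L = L(\w)$ is invariant under the realizing contactomorphism of Subsection \ref{sub: G-actions}: the rotation $R_\theta$ for the $\Z_3$ cases $(\La(D_4),\Z_3)$ and $(\La(\tilde E_6),\Z_3)$, and the involution $\iota$ for the $\Z_2$ cases. Following the recipe of \cite{ABL22}, the $\iota$-invariant fillings are presented as degenerate weaves, symmetric through a reflection of the $r\theta$-plane, with overlapping $A_1^2$ singular loci that nonetheless lift to embedded Legendrian surfaces. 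The induced $G$-action on $H_1(L)$ permutes the short $\mathsf{I}$- and $\mathsf{Y}$-cycles generating the $\mathbb{L}$-compressing sublattice, and I would check that the resulting permutation of the intersection quiver is precisely the color-class identification of Figure \ref{fig: FoldingQuivers}. Reading off that quiver then verifies the four defining conditions of $G$-admissibility directly, in particular condition $(4)$, namely $\#\{i\to j\}\,\#\{i'\to j\}\ge 0$ for $i\sim i'$.

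The main obstacle I anticipate is this last verification: producing the explicit $G$-invariant weave and confirming that the $G$-orbits of its $\mathbb{L}$-compressing cycles match the color classes of the foldable quivers. The involution case is the delicate one, since one must track how $\iota$ acts on the homology lattice of a non-generic weave and confirm both that no cycle is stabilized in a way forcing $\#\{i\to i'\}\neq 0$ and that condition $(4)$ never fails. The two $\Z_3$-cases require checking that $R_{2\pi/3}$ cyclically permutes three $\mathbb{L}$-compressing cycles with the correct intersection pattern. Once $G$-admissibility of the initial quiver is secured, Lemma \ref{lemma: simultaneously mutable} guarantees that subsequent orbit-mutations remain within $G$-fillings, and Theorem \ref{thm: folded cluster} closes the argument.
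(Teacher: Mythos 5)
Your proposal follows essentially the same route as the paper: verify global foldability and the existence of a $G$-admissible initial $G$-filling for each pair, then invoke Theorem \ref{thm: folded cluster} together with the skew-symmetrizability of the folded exchange matrix. The only real difference is that the case-by-case weave construction you flag as the main obstacle is exactly what the paper outsources to \cite[Proposition 5.7]{ABL22} for the finite and affine pairs, and to the explicit rotationally symmetric fillings of Figure \ref{fig: SymmetricT36T44} for the extended affine pairs $(\La(E_7^{(1,1)}),\Z_2)$ and $(\La(E_8^{(1,1)}),\Z_3)$ --- which Table \ref{tab: folded} and the paper's proof treat as part of $\mathcal{H}^G$ even though they are absent from the displayed list defining it, and hence from your case analysis.
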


\begin{proof}
As discussed in Subsection \ref{sub: folding}, each quiver and $G$-action corresponding to a $(\La, G)$ pair in $\mathcal{H}^G$ is globally foldable. Moreover, \cite[Proposition 5.7]{ABL22} establishes the existence of initial $G$-fillings for all $(\La, G)\in \mathcal{H}^G$ of finite and affine type. Initial $G$-fillings for $(\La(E_7^{(1,1)}), \Z_2)$ and $(\La(E_8^{(1,1)}), \Z_3)$ are given in Figure \ref{fig: SymmetricT36T44}. These fillings are obtained from reduced plabic graphs using a construction given in \cite{CLSBW23}. By Theorem \ref{thm: folded cluster}, we have that $\C[\FM(\La)^G]$ is a skew-symmetrizable cluster algebra for every $(\La, G)\in \mathcal{H}^G$. 
\end{proof}

\subsection{Generators of folded cluster modular groups}

\begin{figure}
    \centering
\includegraphics[width=\textwidth]{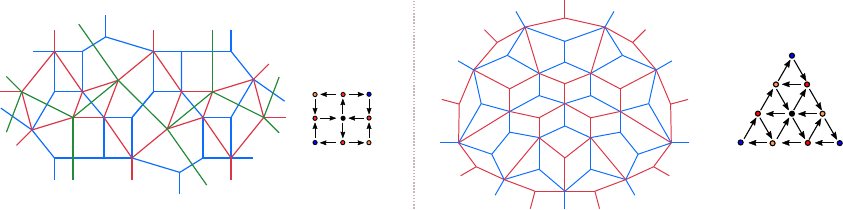}
    \caption{Initial weave fillings and intersection quivers for $\La(E_7^{(1, 1)})$ (left) and $\La(E_7^{(1, 1)})$ (right) exhibiting the required rotational symmetry}
    \label{fig: SymmetricT36T44}
\end{figure}

In this subsection, we describe generators of the cluster modular groups $\SG(\FM(\La)^G)$ as $G$-equivariant Legendrian loop actions for Legendrian links and associated $G$-actions belonging to the set $\mathcal{H}^G$. We first note that for any pair $(\La, G)$ satisfying the hypotheses of Theorem \ref{thm: folded cluster} and admitting a Legendrian $G$-loop $\varphi$, the induced action of $\varphi$ on $\SG(\FM(\La))$ descends to a cluster automorphism on $\SG(\FM(\La))^G$. This follows from the fact that such a $G$-loop necessarily preserves $G$-invariance of Lagrangian fillings of $\La$ and hence the induced action is $G$-equivariant. Using this characterization, we give a case-by-case description of the Legendrian loops that descend to generators of $\SG(\FM(\La))^G$.

\begin{proof}[Proof of Theorem \ref{thm: folded gens}]
We consider the braids appearing in Table \ref{tab: folded} and the indicated group action. Note that the choice of initial braid word appearing in Table \ref{tab: folded} is adopted from \cite{ABL22} and differs from the choice of initial braid word given in the description of the Legendrian links in $\mathcal{H}$ from the introduction in several cases. The reason for this difference is that the braid words from Table \ref{tab: folded} more readily display the symmetry required for constructing $G$-fillings out of Legendrian weaves. To relate Legendrian loops of Legendrian isotopic links $\La_1$ and $\La_2$, one can fix a specific Legendrian isotopy $\psi$ taking $\La_1$ to $\La_2$ and conjugate a Legendrian loop $\varphi$ by $\psi$. The result $\psi^{-1}\circ\varphi\circ \psi$ is a Legendrian loop of $\La_2$ and induces the same action on fillings.

{\bf Finite type:}

\begin{itemize}
    \item[$B_n$:] 
    As stated in the proof of Theorem \ref{thm: gens}, the K\'alm\'an loop $\rho$ generates the cluster modular group $\mathcal{G}(\FM(\La(A_{2n-1})))$ and acts as rotation by $\frac{2\pi}{2n+2}$ on any weave filling of $\La(A_{2n-1})$. Therefore the group action is generated by $\rho^{n+1}$, which implies that $\rho^k$ is a $G$-loop for any $k\in \Z$.

\item[$C_n$:] In order to produce a $\Z_2$ symmetry of $D_{n+1}$, An, Bae, and Lee use the braid given in Table \ref{tab: folded} obtained as a stabilization of the link $\La(D_{n+1})$ we originally defined. Their 4-stranded braid is fixed by $\iota$ and admits a $\vartheta$-loop involving the middle pair of strands that is also fixed by $\iota$. This $\vartheta$-loop is conjugate to the $\vartheta$-loop defined in the proof of Theorem \ref{thm: gens} and therefore generates $\SG(\FM(\La(D_{n+1}))^\iota)$. 

\item[$F_4$:] In the case of $\La(E_6)$, the DT transformation generates the cluster modular group $\SG(\FM(\La(E_6)))$. Since $\La(E_6)$ is fixed under $\iota$, the Legendrian isotopy given as part of the definition of $\DT$ can be chosen so that its trace is an $\iota$-fixed exact Lagrangian cobordism.

     \item[$G_2$:] For $\La(D_4)\cong \La(3, 3)$, the loop $\Sigma_1$ defined in Subsection \ref{sub: torus loops} commutes with $\rho^2$ and is of order 4, yielding a generator of $\SG(\FM(\La(D_4)))^{\rho^2}\cong \Z_4$.
\end{itemize}

{\bf Affine type:}

\begin{itemize}

\item[$\tilde{B}_n$:] Consider the symmetric front for $\La(\tilde{D}_{2n})$ given by the braid word in Table \ref{tab: folded}. From \cite[Theorem 5.2]{KaufmanGreenberg}, we know that the cluster modular group of $\tilde{B}_n$ is generated by the induced action of $\tau_1\in \SG(\tilde{D}_{2n})$  and $\DT$. Denote the Legendrian loop from the proof of Theorem \ref{thm: gens} that induces $\tau_1$ by $\vartheta_1$. Conjugating by the Legendrian isotopy between different choices of braid representatives of $\La(\tilde{D}_{2n})$ allows us to obtain $\vartheta_1$ as a rotation-invariant loop of $\La(\tilde{D}_{2n})$ that induces the required automorphism on $\FM(\La(\tilde{D}_{2n}))^G$.

\item[$\tilde{F}_4$:] The cluster modular group of $\tilde{F}_4$ is generated by the DT transformation \cite[Theorem 5.2]{KaufmanGreenberg}. As with folding $E_6$ by $\iota$, the Legendrian isotopy as part of the $\DT$ transformation can be chosen to be fixed by $\iota$. 

\item[$\tilde{G}_2$:] The case of $\tilde{G}_2$ is analogous to that of $\tilde{F}_4$, replacing the action of $\iota$ by $2\pi/3$ rotation.

\end{itemize}

{\bf Extended affine type:}

\begin{itemize}

\item[$G_2^{(1,1)}$:] We consider the front for $\La(E_8^{(1, 1)})\cong \La(3, 6)$ given by the braid word $(\sigma_1\sigma_2)^9$ and fold by action of the Legendrian loop $\rho^3=\DT^{-2}$. The $\Sigma_i$ loops from Subsection \ref{sub: torus loops} are defined on the first $(k)(k-1)$ crossings for arbitrary torus links $\La(k, n)$ and extended by symmetry, so are necessarily fixed by $\rho^k$. Hence the $\Sigma_i$ loop generators of $\SG(\FM(\La(3,6)))$ together with the DT transformation yield a generating set for $\SG(\FM(\La(3, 6)))^{\rho^3}$.

\item[$B_3^{(1, 1)}$:] The case of folding $\La(E_7^{(1, 1)})\cong \La(4, 4)$ is analogous to the previous case, replacing $(3, 6)$ with $(4, 4)$ as necessary. 

\end{itemize}

\end{proof}

\begin{table}[]
\begin{tabular}{l|l|l|l|l}
Cluster Type & Braid word $(-1)$-closure & Folded type  & CMG (of folded) & $G$-action  \\
\hline
$A_{2n-1}$  & $\sigma_1^{2n+2}$ & $B_n$  &  $\Z_{n+1}$ & $\Z_2$ $(=\rho^{n+1})$  \\

$D_{n+1}$ $n\geq 4$ & $\sigma_2^{n-1}\sigma_1\sigma_3\sigma_2(\sigma_1\sigma_3)^3\sigma_2\sigma_1\sigma_3\sigma_2^2\sigma_1\sigma_3\sigma_2$ &  $C_n$ &  $\Z_{n+1}$  &  involution
\\

$E_6$  & $\sigma_2^2\sigma_1\sigma_3\sigma_2(\sigma_1\sigma_3)^4\sigma_2\sigma_1\sigma_3\sigma_2^2\sigma_1\sigma_3\sigma_2$ &$F_4$   & $\Z_7$   
& involution  \\

$D_{4}$ &   
$(\sigma_2\sigma_1)^6$
&  $G_2$ &  $\Z_4$  & $\Z_3$
\\

$\tilde{D}_{2n}$ & $(\sigma_2\sigma_1^3\sigma_2\sigma_1^3\sigma_2\sigma_3\sigma_1^{n-2})^2$ 
& $\tilde{B}_n$  & $\Z_2\times \Z$ & $\Z_2$
\\

$\tilde{E}_6$ & $\sigma_2^3\sigma_1\sigma_3\sigma_2(\sigma_1\sigma_3)^4\sigma_2\sigma_1\sigma_3\sigma_2^2\sigma_1\sigma_3\sigma_2$ &  $\tilde{F}_4$ &  $\Z$  & involution
\\  

$\tilde{E}_6$ & $(\sigma_2\sigma_1^4)^3$ & $\tilde{G}_2$ &  $\Z$  & $\Z_3$
\\

$E_7^{(1,1)}$ & $(\sigma_1\sigma_2\sigma_3)^4$ &  $C_3^{(2, 2)}$ &  $\Mod(\S^2, 4)\rtimes \Z_2$  & $\Z_2$
\\ 

$E_8^{(1,1)}$ & $(\sigma_1\sigma_2)^9$ &  $G_2^{(1, 1)}$ &  $\PSL_2(\Z)\rtimes \Z_2$  & $\Z_3$

\end{tabular}
\caption{\label{tab: folded}} 
\end{table}

\begin{remark}
 For Legendrian links that do not admit $G$-fillings, one can still study $\FM(\La)^G$ for $G$-actions induced by Legendrian loops. In the case of Legendrian torus links acted on by powers of the K\'alm\'an loop, this appears to correspond to the cyclic symmetry loci of Grassmannians considered in \cite{fraser2020cyclic}. In loc.~cit. Fraser considers generalized cluster structures on $G$-invariant components of the affine cone of the Grassmannian, which suggests that a similar structure could be realized in the contact-geometric setting, perhaps by considering singular exact Lagrangian $G$-fillings of Legendrian torus links. 
\end{remark}

We complete our discussion related to cluster modular groups of skew-symmetrizable cluster varieties with a final observation on the anti-symplectic involution. As mentioned above, a result of Kaufman \cite[Lemma 2.8]{kaufman2023special} implies that any cluster algebra with an involutory cluster automorphism is globally foldable with respect to this involution. This observation allows us to generalize the case of the folding $\tilde{E}_6\to \tilde{F}_4$ to any Legendrian link $\La$ fixed by anti-symplectic involution $\iota$. Indeed, for any Legendrian $\La=\La(\mathbb{G})$ arising as the link of a complete grid plabic graph fixed by $\iota$, $\La$ admits an $\iota$-invariant initial weave filling by modifying the construction of \cite{CasalsWeng} following the notion of `degenerate' $N$-graphs given in \cite[Section 3.1]{ABL22}. For such Legendrians, any pair of Legendrian loops interchanged by $\iota$ induces a cluster automorphism of the folded cluster algebra $\FM(\La, T)^\iota$.

	\section{Nielsen-Thurston classification of Legendrian loops}\label{sec: NielsenThurston}

In this section, we consider qualitative properties of Legendrian loops by investigating connections between cluster modular groups and mapping class groups. We start by describing a Nielsen-Thurston classification of cluster automorphisms due to Ishibashi and then use this framework to give some general statements about fixed point properties of Legendrian loop actions.    

We introduce the following characterization of cluster automorphisms as our definition of a Nielsen-Thurston classification for cluster automorphisms. Our definition differs from Ishibashi's original definition in order to give a more streamlined description in the context of Legendrian loops. See \cite{Ishibashi2018} for alternate characterizations in terms of fixed points of automorphisms acting on the cluster complex.

\begin{definition}
    A cluster automorphism $\varphi\in \SG(\FM(\La))$ is 
    \begin{enumerate}
        \item periodic if it is of finite order.
        \item cluster reducible if it fixes a set of cluster variables.
        \item cluster pseudo-Anosov if no power of $\varphi$ is periodic or cluster reducible.
    \end{enumerate}
\end{definition}

Note that the cyclic subgroup generated by any cluster automorphism will correspond to at least one of these classes. We say that a Legendrian loop is (cluster) periodic, reducible, or pseudo-Anosov if its induced cluster automorphism is of the corresponding type. Below, we provide examples of periodic and reducible Legendrian loops. 

\begin{ex}
Consider the Legendrian torus link $\La(k,n)$ with $2\leq k, n$. As discussed in Section \ref{sec: Torus Links}, $\FM(\La(k, n))$ is known to have the same mutable part as $\Gr^\circ(k, n+k)$, the top-dimensional positroid cell of the Grassmannian, which itself admits a cluster structure. The K\'alm\'an loop induces the cyclic shift $\rho$, which acts on column vectors in the matrix representation of the top-dimensional positroid cell by $v_i \mapsto v_{i-1}$. By construction, $\rho$ has order $k+n$, which implies that the K\'alm\'an loop is always periodic.  
\end{ex}

\begin{ex}\label{ex: reducible}
  Consider the Legendrian $\vartheta$-loop associated to the link $\La(\sigma_1\sigma_2\sigma_2\sigma_1\Delta^2, 1, \sigma_1^{n-4})\cong \La(\tilde{D}_n)$. From Lemma \ref{lemma: satellite loop mutations}, we see that the induced action of the $\vartheta$-loop fixes a single quiver vertex, giving a cluster reducible automorphism.
\end{ex}

We will demonstrate below that many classes of $\vartheta$ loops are cluster reducible. In contrast, producing an example of a pseudo-Anosov Legendrian loop appears to be more challenging. The following example describes a composition of Legendrian loops that conjecturally induces a cluster pseudo-Anosov automorphism.

\begin{ex}\label{ex: pseudo-Anosov} 
Consider the Legendrian link $\La(\D^2_{n, 0, 0, 0}):=\La(\sigma_2\sigma_1\sigma_3\sigma_2\sigma_4\sigma_3^2\sigma_4\sigma_2\sigma_3\sigma_1\sigma_2\sigma_1^{n-4})$ with $n\geq 4$ formed by adding three unlinked meridians to $\La(2, n-4)$. As suggested by the notation, the initial quiver associated to $\La(\D^2_{n, 0, 0, 0})$ corresponds to a thrice-punctured disk with $n$ boundary marked points. Analogous to the case of $\La(\tilde{D_n})$, the link $\La(\D^2_{n, 0, 0, 0})$ admits Legendrian loops generating the mapping class group of $\D^2_{n, 0,0,0}$. 
    The cluster modular group $\SG(\SA(\D^2, 0\dots, 0))$ of the cluster $\SA$-space $\SA(\D^2, 0, \dots, 0)$ associated to the $n$-punctured disk has a subgroup isomorphic to the $n$-stranded braid group. When $n\geq 2$, the mapping class $\sigma_i\sigma_{i+1}^{-1}\in \Mod(\D^2, 0, \dots, 0)$ is psuedo-Anosov, hence the corresponding cluster automorphism is cluster pseudo-Anosov. We conjecture that a computation similar to that given in Appendix \ref{sec: appendix} realizes this cluster pseudo-Anosov automorphism as the composition of two Legendrian loops.
\end{ex}

\subsection{Fixed points}

Extending the analogy between mapping class groups and cluster modular groups, we study the fixed points of Legendrian loop actions on $\FM(\La).$ Define the positive real part $\FM(\La)_{>0}$ of the cluster $\mathcal{A}$-space $\FM(\La)$ to be the space given by requiring all of the cluster variables of the initial seed to be strictly positive real numbers. As positivity of cluster variables is preserved under cluster mutation, this definition gives a well-defined notion of positivity in the cluster algebra. 
The cluster modular group acts on $\FM(\La)_{>0}$ by permuting the positive real cluster tori. Interpreting \cite[Theorem 2.2]{Ishibashi2018}  
in the context of contact geometry immediately yields Theorem \ref{thm: fixed points}, restated here for clarity. 

\begin{theorem}[Theorem \ref{thm: fixed points}]
    The induced action of any periodic Legendrian loop $\varphi$ of a Legendrian link $\La$ has a fixed point in $\FM(\La)_{>0}$. 
\end{theorem}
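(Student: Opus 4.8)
The plan is to reduce the statement to Ishibashi's fixed-point theorem for periodic cluster automorphisms, so that the work on our side is essentially a dictionary between the contact-geometric and cluster-theoretic pictures. First I would record that the induced action $\tilde\varphi$ of $\varphi$ is a genuine cluster automorphism: by the discussion in Subsection \ref{sub: Mutation sequences}, concatenating the trace $\Trace(\varphi)$ with exact Lagrangian fillings of $\La$ permutes cluster charts, preserves intersection quivers, and commutes with mutation, so $\tilde\varphi \in \SG(\FM(\La))$. By hypothesis $\varphi$ is periodic, which—per the Nielsen-Thurston classification above—means precisely that $\tilde\varphi$ has finite order in $\SG(\FM(\La))$; equivalently, if one presents $\varphi$ as a finite-order loop of order $m$, then $\varphi^m$ is Legendrian isotopic to the identity, its trace induces the trivial action on fillings, and hence $\tilde\varphi^m = \mathrm{id}$.

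Next I would recall the relevant group action on the positive real part. Because the cluster $\mathcal{A}$-mutation formula is subtraction-free, positivity of cluster variables is preserved under mutation, so $\FM(\La)_{>0}$ is well defined and $\SG(\FM(\La))$ acts on it by permuting the positive real cluster tori. This is the precise cluster-theoretic analogue of the action of a mapping class group on Teichm\"uller space, under which $\FM(\La)_{>0}$ plays the role of Teichm\"uller space and periodic elements are expected to have fixed points, mirroring Nielsen realization in the cyclic case.

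Finally I would invoke \cite[Theorem 2.2]{Ishibashi2018}, which asserts that a periodic element of the cluster modular group fixes a point of the positive real part. Applying this to the finite-order automorphism $\tilde\varphi$ produces the desired fixed point in $\FM(\La)_{>0}$, completing the proof. The main obstacle is not any computation but the verification that this translation is legitimate: namely that $\tilde\varphi$ genuinely lies in the (orientation-preserving) cluster modular group and has finite order there, and that the structural hypotheses under which Ishibashi runs his center-of-mass argument on $\FM(\La)_{>0}$ are met for the Legendrian links under consideration. Both are supplied by the cluster structure of Casals and Weng together with the finite-order reduction of the first step, so once these are in place the conclusion is immediate.
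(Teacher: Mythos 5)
Your proposal is correct and follows the same route as the paper, which likewise observes that the induced action of a Legendrian loop is a cluster automorphism, that periodicity means finite order in $\SG(\FM(\La))$, and then deduces the fixed point immediately from \cite[Theorem 2.2]{Ishibashi2018} applied to the action on the positive real part. The only caveat is that your parenthetical ``equivalently, $\varphi^m$ is Legendrian isotopic to the identity'' is stronger than needed (and not the paper's definition of periodic, which refers only to the finite order of the induced cluster automorphism); the rest of your argument does not rely on it.
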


We can also use Theorem \ref{thm: fixed points} to detect Legendrian loops of infinite order. The following description mimics a definition of Casals and Ng given for Legendrian loops in the augmentation variety.

\begin{definition}
    The induced action $\tilde{\varphi}$ of a Legendrian loop is entire on a toric chart $\mathcal{C}_L=(\C^\times)^n$ induced by an exact Lagrangian filling $L$ of $\La$ if for $k\neq l\in \Z$ we have $\varphi^k(\mathcal{C}_L)\neq \varphi^l(\mathcal{C}_L).$
\end{definition}

As a direct consequence of Theorem \ref{thm: fixed points}, we obtain the following corollary:

\begin{corollary}
    The induced action of a Legendrian loop on $\FM(\La)$ is entire on any cluster chart induced by an exact Lagrangian filling if it has no fixed points in $\FM(\La)_{>0}$.  
\end{corollary}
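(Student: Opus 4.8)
The plan is to prove the contrapositive: if the induced action $\tilde{\varphi}$ fails to be entire on some cluster chart $\mathcal{C}_L$, then $\varphi$ is periodic, and hence Theorem \ref{thm: fixed points} supplies a fixed point in $\FM(\La)_{>0}$. Recall that the induced action of a Legendrian loop is already known to be a cluster automorphism of $\FM(\La)$, so $\tilde{\varphi}$ is a genuine element of $\SG(\FM(\La))$ acting by permuting the toric charts (clusters).

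First I would unwind the failure of entireness. Suppose $\tilde{\varphi}$ is not entire on $\mathcal{C}_L$, so there exist integers $k \neq l$ with $\tilde{\varphi}^k(\mathcal{C}_L) = \tilde{\varphi}^l(\mathcal{C}_L)$. Applying $\tilde{\varphi}^{-l}$ and setting $m = k - l > 0$ gives $\tilde{\varphi}^m(\mathcal{C}_L) = \mathcal{C}_L$. Since $\tilde{\varphi}^m$ permutes clusters and fixes the chart $\mathcal{C}_L$, it fixes the associated seed $s_L$: its image cluster coincides with the cluster of $s_L$, and it carries the quiver of $s_L$ to itself. Thus $\tilde{\varphi}^m$ acts on $s_L$ merely by relabeling the quiver vertices, that is, through an orientation-preserving quiver automorphism $\pi$.

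Next I would upgrade "fixes a seed setwise" to "is periodic." Because the quiver has finitely many vertices, its automorphism group is finite, so $\pi$ has some finite order $N$; then $\tilde{\varphi}^{mN}$ fixes the seed $s_L$ with trivial relabeling, i.e. it fixes each cluster variable of $s_L$ and the quiver itself. Since a cluster automorphism is determined by its image on a single seed \cite[Proposition 2.4]{AssemSchifflerShramchenko}, this forces $\tilde{\varphi}^{mN} = \mathrm{id}$ in $\SG(\FM(\La))$. Hence $\varphi$ has finite order, i.e. $\varphi$ is periodic. Applying Theorem \ref{thm: fixed points} to the periodic loop $\varphi$ then produces a fixed point in $\FM(\La)_{>0}$, which is exactly the contrapositive of the asserted implication; since the existence of a fixed point does not reference the chart, the absence of such a fixed point forces $\tilde{\varphi}$ to be entire on every chart induced by an exact Lagrangian filling.

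The main obstacle is the middle step, where one must argue that an automorphism fixing a chart setwise can only act through a finite-order relabeling of the corresponding seed, so that a suitable power is the identity automorphism rather than merely fixing one chart. This rests on two rigidity facts that are already available: the finiteness of the quiver's automorphism group, and the fact that a cluster automorphism is pinned down by its action on a single seed. Everything else is formal manipulation of the definitions of \emph{entire} and of the cluster modular group action, together with a direct appeal to Theorem \ref{thm: fixed points}.
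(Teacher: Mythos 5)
Your proposal is correct and follows essentially the same route as the paper: the paper's (very terse) proof also argues via the contrapositive that a power of the loop fixing a cluster chart would force periodicity and hence, by Theorem \ref{thm: fixed points}, a fixed point in $\FM(\La)_{>0}$. Your write-up simply makes explicit the rigidity step (a chart-fixing power acts by a finite-order quiver automorphism, and a cluster automorphism is determined by its action on one seed) that the paper leaves implicit.
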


\begin{proof}
    Let $\varphi$ be a Legendrian loop of $\La$ with no fixed points in $\FM(\La)_{>0}.$ Since $\varphi$ has no fixed points, this in particular implies that no power of it can fix a cluster chart. Therefore, $\varphi^k(\mathcal{C}_L)\neq \varphi^l(\mathcal{C}_L)$ for any $k \neq l \in \Z$.  
\end{proof}

We now present examples of the fixed point behavior of Legendrian loops of $\La(A_n)$ and $\La(E_{8}^{(1,1)})$.

\begin{ex}
Consider the initial seed $x_1 \leftarrow x_2$ in $\FM(\La(A_2))$. The cluster automorphism $\tilde{\rho}$ induced by the K\'alm\'an loop has a single fixed point $x_1 = x_2 = \frac{1+\sqrt{5}}{2}$. In the case of $A_n$-type, Ishibashi's work implies that the cluster modular group action is properly discontinuous on $\FM(\La(A_n))_{>0}$ \cite[Theorem 3.8]{Ishibashi2018}. Therefore, the existence of this fixed point is equivalent to the fact that $\rho$ has finite order.
\end{ex}

\begin{ex}\label{ex: T36FixedPoints}
    Now consider the initial seed of $\La(3, 6)$ corresponding to the front $\La(\beta)$ for $\beta=(\sigma_1\sigma_2)^6$.   
    The loop $\Sigma_1$ defined in \cite{CasalsGao} and described in Section \ref{sec: Torus Links} has no fixed points in $\FM(\La(3, 6))_{>0}$, recovering the fact that $\sigma_1$ (conjugate to $\vartheta_1$ by \cite[Theorem 6.1]{KaufmanGreenberg}) has infinite order and implying that it is entire on any seed. This is verified by showing that there are no positive real solutions to the system of equations below obtained from performing the Legendrian loop and setting the corresponding cluster variables equal to each other. 
    
    \begin{align*}
a_1&= \frac{a_2 + a_3 + a_1a_4}{a_2}, \qquad  a_2= a_4, \qquad  a_3= \frac{a_2 + a_3}{a_1},
 \qquad a_4= \frac{a_3a_6 + a_4a_7}{a_5},\\
 a_5&= \frac{a_1a_3a_6 + (a_1a_4 + (a_2 + a_3)a_5)a_7}{a_1a_3a_5}, \qquad a_6= a_6,\\
a_7&= \frac{a_1a_3a_5a_6a_8 + (a_1a_3a_6^2 + ((a_1a_4 + (a_2 + a_3)a_5)a_6)a_7)a_9 + (a_1a_3a_6a_7 + (a_1a_4 + (a_2 + a_3)a_5)a_7^2)a_{10}}{a_1a_3a_5a_7a_8},\\
a_8&= a_{10}, \qquad a_9= \frac{a_1a_3a_5a_8 + (a_1a_3a_6 + (a_1a_4 + (a_2 + a_3)a_5)a_7)a_9}{a_1a_3a_5a_7}, \qquad a_{10}= a_9\\
    \end{align*}

Inputting the above system of equations into a computer algebra system verifies that no positive real solution exists. 
\end{ex}

\subsection{Cluster Reduction}

In this section, we discuss the process of cluster reduction, analogous to the concept of a reduction system in the theory of mapping class groups. From a reducible mapping class $\varphi$, one can obtain a mapping class on a simpler surface by cutting along curves fixed by $\varphi$. The analogous process in cluster theory allows us to gain additional information about reducible Legendrian loops.

We define a cluster reducible automorphism to be proper reducible if in addition to fixing some collection of cluster variables setwise, it also fixes at least one cluster variable. Note that a high enough power of a cluster reducible automorphism necessarily yields a proper reducible automorphism. Ishibashi defines the process of cluster reduction as follows: given a cluster reducible cluster automorphism $\varphi$, one freezes quiver vertices corresponding to cluster variables fixed by some power of $\varphi$. This induces a cluster automorphism of a cluster algebra with a smaller mutable part. The induced automorphism is referred to as the cluster reduction of $\varphi$.

\begin{ex}\label{ex: cluster reduction}
    For $\La(\tilde{D}_n)$, the Legendrian loop $\vartheta_1$ fixes a single cluster variable. The cluster reduction of this quiver produces a new quiver with the fixed cluster variable either frozen or deleted. Upon inspection, the mutable part of the quiver obtained by cluster reduction corresponds to a (tagged) triangulation of an annulus with $n$ marked points on the outer boundary and two marked points on the inner boundary. Note that this triangulation, pictured in Figure \ref{fig: ReductionAffineDn}, can be readily obtained from the triangulation of the twice punctured disk we started with by replacing the edge corresponding to the newly frozen cluster variable with an additional boundary component. The induced cluster automorphism is a $2\pi/n$ rotation of the outer boundary component. 
\end{ex}

	\begin{center}
		\begin{figure}[h!]{ \includegraphics[width=.3\textwidth]{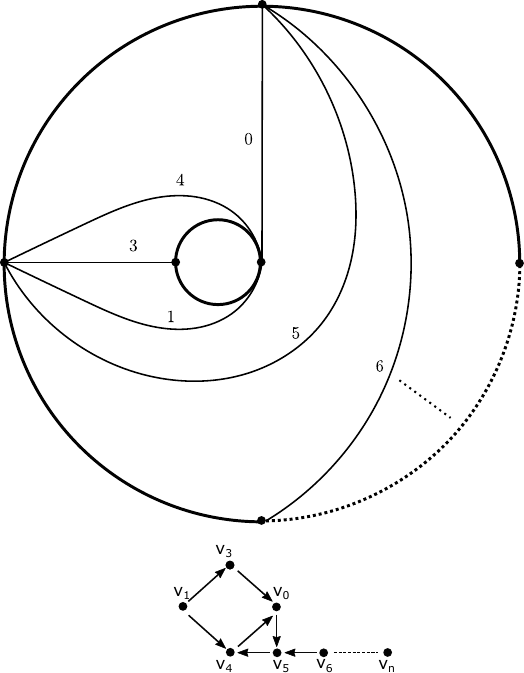}}\caption{Triangulation of $\D^2_{n_1-2, 2}$ corresponding to the cluster reduction of $\vartheta_1$. Replacing the inner boundary component by a single edge recovers the triangulation of the twice-punctured disk in Figure \ref{fig:TwicePuncturedTaggedTriangulations} (right).}
			\label{fig: ReductionAffineDn}\end{figure}
	\end{center}

\begin{remark}
    Note that the process of cluster reduction presented here is entirely algebraic. 
    As a contact-geometric analogue, one can consider modifying the ambient symplectic manifold of the Lagrangian filling so that the cycle $\gamma$ no longer bounds an embedded Lagrangian disk with which to perform a mutation. 
    Such a modification might plausibly 
    be obtained by removing the Lagrangian 2-disk bounding $\gamma$ from the Lagrangian skeleton constructed in \cite[Section 1.1]{CasalsLagSkel}. 
\end{remark}

Example \ref{ex: cluster reduction} motivates the notion of a cluster Dehn twist, which we introduce below.
Denote by $Q_i$ the quiver with two mutable vertices and $k$ edges from vertex $v_1$ to vertex $v_2$. The quiver $Q_k$ admits a cluster automorphism $T_k=(\mu_1, (12)).$  We have the following definition due to Ishibashi \cite{Ishibashi2020}.

\
\begin{definition}\label{def: cluster Dehn twist}
    A cluster Dehn twist is a cluster automorphism $\varphi$ such that after a finite number of cluster-reductions, the induced automorphism $\tilde{\varphi}$ satisfies $\tilde{\varphi}^n=T_k^m$ for some nonzero integers $m, n$ and $k\geq 2$. 
\end{definition}

 We say that a Legendrian loop is a cluster Dehn twist if its induced action is. Note that a cluster Dehn twist is necessarily of infinite order because the induced cluster automorphism is of infinite order. Any Dehn twist (or fractional twist) of a tagged triangulation is a cluster Dehn twist \cite{Ishibashi2020}. Indeed, the quiver $Q_2$ corresponds to an annulus with one marked point on each boundary component and $T_2$ corresponds to a Dehn twist in this annulus. Example \ref{ex: cluster reduction} serves as an example of a Legendrian loop that is a cluster Dehn twist.

\subsubsection{Legendrian satellite loops}

In order to prove Theorem \ref{thm: fixed pt converse} and related corollaries, we will apply the process of cluster reduction to $\vartheta$-loops. As in the case of Example \ref{ex: cluster reduction}, the induced cluster automorphisms can be understood as actions on surface-type cluster algebras. The cluster modular groups of these algebras are relatively well understood and exhibit certain crucial dynamical properties. 

We define a group action of a group $G$ on a topological space $X$ to be properly discontinuous if for every compact subset $K\subseteq X$, the set $\{g\in G | gK\cap K \neq \emptyset\}$ is finite. Ishibashi shows that if the cluster modular group action on the cluster $\mathcal{A}$-space or cluster $\mathcal{X}$-space is properly discontinuous, then the implication of Theorem \ref{thm: fixed points} can be upgraded to an equivalence. More precisely, in such a case a cluster automorphism has a fixed point in the positive real part of a cluster $\SA$-space or cluster $\mathcal{X}$-pace if and only if it is of finite order. This property holds in the case of surface-type cluster algebras \cite[Theorem 3.8]{Ishibashi2018}, which partially motivates the following technical lemma:

\begin{lemma}\label{lemma: reduction to surface}
Let $\La=\La(\beta\Delta^2, i, \sigma_1^k)$ be a Legendrian satellite link 
The cluster reduction of the induced $\vartheta$-loop action is a tagged mapping class on a surface-type cluster algebra.    
\end{lemma}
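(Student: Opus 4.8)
The plan is to read off the induced automorphism $\tilde\vartheta$ from Lemma~\ref{lemma: satellite loop mutations}, show it is proper reducible, perform the cluster reduction by freezing its fixed variables, and then identify the reduced quiver as a surface-type triangulation quiver on which $\tilde\vartheta$ acts geometrically. This follows the template already carried out by hand for $\La(\tilde D_n)$ in Examples~\ref{ex: reducible} and~\ref{ex: cluster reduction}.

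First I would apply Lemma~\ref{lemma: satellite loop mutations}. For $\La=\La(\beta\Delta^2,i,\sigma_1^k)$ with $k\geq 3$ the mutation sequence of $\tilde\vartheta$ is supported entirely on $Q_\gamma\cup Q_i$, and every cycle of the accompanying permutation is contained in $Q_\gamma\cup Q_i$ as well. Hence any mutable vertex outside $Q_\gamma\cup Q_i$ is neither mutated nor relabelled, so its cluster variable is fixed by $\tilde\vartheta$. Because the full twist $\Delta^2$ forces crossings among strands other than the $i$-th, this fixed set is nonempty for the links under consideration; thus $\tilde\vartheta$ is proper reducible and cluster reduction is available.

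Next I would carry out the cluster reduction, freezing exactly the vertices fixed by a power of $\tilde\vartheta$ --- the complement of $Q_\gamma\cup Q_i$ together with the vertices of $Q_\gamma\cup Q_i$ that the permutation stabilizes (the analogue of the vertex $v_2$ frozen in Example~\ref{ex: cluster reduction}). The resulting mutable quiver $\bar Q$ is the full subquiver on the remaining vertices, and crucially its isomorphism type depends only on the local plabic-fence model of the pattern $\sigma_1^k$ and of row $i$, not on the global shape of $\beta$. The key claim is that $\bar Q$ is mutation-equivalent to the signed adjacency quiver of a tagged triangulation of a punctured surface: the chains $Q_\gamma$ (one vertex per pair of consecutive crossings of $\sigma_1^k$) and $Q_i$, glued along $Q_\gamma\cap Q_i=\{\,i_{r_j}=\gamma_{r_j}\,\}$, assemble into the ladder quiver of a triangulated annulus whose two boundary circles carry the marked points coming from $\sigma_1^k$ and from row $i$. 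I would establish this by matching the Reidemeister~III and square-move steps of Lemma~\ref{lemma: satellite loop mutations} to arc flips, exactly as the twice-punctured-disk pictures of Figure~\ref{fig:TwicePuncturedTaggedTriangulations} encode the $\tilde D_n$ reduction, using that the tails are $A$-type and hence mutation-equivalent to any chosen orientation.

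Finally I would identify the reduced action: the cyclic-shift structure of the permutation in Lemma~\ref{lemma: satellite loop mutations} becomes, under the triangulation dictionary, a rotation of one boundary component of the annulus, precisely the move displayed for $\vartheta_1$ in Figure~\ref{fig:Theta1Triangulations} and recorded in Example~\ref{ex: cluster reduction}. A boundary rotation is an element of the tagged mapping class group, so the correspondence $\Mod_{tag}(\Sigma)\cong\SG(\mathcal{A}(\Sigma))$ of Proposition~8.5 in~\cite{BridgelandSmith_QuadDiff} shows the reduced automorphism is a tagged mapping class, as claimed. The main obstacle I anticipate is the surface identification in the previous step: proving in full generality that the subquiver on $Q_\gamma\cup Q_i$ is genuinely of surface type with the correct gluing --- rather than merely some finite-mutation-type quiver --- and determining the surface and its marked points from $(\beta,i,k)$. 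The delicate point is the interaction of the two cyclic shifts at the shared vertices $\{i_{r_j}\}$, which controls whether the two chains close up into an annulus; the explicit weave computation of Appendix~\ref{sec: appendix} for $\La(\tilde D_n)$ is the model I would generalize.
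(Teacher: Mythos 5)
Your overall strategy matches the paper's: use Lemma~\ref{lemma: satellite loop mutations} to see that the fixed vertices are exactly those outside $Q_\gamma\cup Q_i$, freeze them, and then identify the remaining quiver $Q_{nf}$ as the adjacency quiver of a tagged triangulation on which the loop acts as a mapping class. (One small correction along the way: the paper freezes precisely the complement of $Q_\gamma\cup Q_i$; your additional category of ``vertices of $Q_\gamma\cup Q_i$ stabilized by the permutation'' is not part of the reduction and would in any case not consist of fixed cluster variables, since such vertices are still mutated at.)

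The genuine gap is in the step you yourself flag as the obstacle: the identification of the surface. Your candidate --- a single triangulated annulus with marked points distributed on its two boundary circles --- is not what $Q_{nf}$ is in general, and the interaction at the shared vertices $\{i_{r_j}=\gamma_{r_j}\}$ does not merely decide ``whether the two chains close up into an annulus.'' The paper's proof analyzes $Q_{nf}$ locally between each consecutive pair $v_{i_j}, v_{i_{j+1}}$ of shared vertices: there the two directed chains (one from $Q_\gamma$, one from $Q_i$, oppositely oriented) close up into an oriented $\ell$-cycle, with extra oriented $3$-cycles attached at $v_{i_j\pm1}$; each such $\ell$-cycle contributes its \emph{own} additional boundary component with $\ell-4$ marked points (or, when $\ell=4$, an interior puncture), while the leftmost section is an unoriented cycle and the rightmost is an $A$-type tail of $k-3$ vertices coming from $\sigma_1^k$. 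Gluing these local pieces produces a disk with several boundary components and punctures --- one for every pair of consecutive intersections of $Q_\gamma$ and $Q_i$ --- not a single annulus, and the induced automorphism is a partial Dehn twist about each of these boundary components rather than one boundary rotation. Since the multi-boundary case is exactly what occurs for a general $\beta$ with several crossings through the $i$th strand, and since your proposal neither produces this decomposition nor proves surface-type-ness by any other route, the central claim of the lemma remains unestablished in your argument.
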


The technique for the following proof generalizes an argument of Fraser in the case of cluster modular groups of Grassmannians.\footnote{Chris Fraser, Personal Communication, 2/22/22.}

\begin{proof}
Let $\La(\beta\Delta_n^2, i, \sigma_1^k)\cong\La(\beta')$ be a Legendrian link formed by satelliting $\sigma_1^k$ about the $i$th strand of the $(-1)$-framed closure of $\beta\Delta_n^2$ and assume $k\geq 3$. Recall from Section \ref{sub: Mutation sequences} that $Q_\gamma$ and $Q_i$ denote the subset of quiver vertices corresponding to the satellited braid $\gamma=\sigma_1^k$ and the $i$th strand of $\beta'\Delta_{n+1}^2$, respectively. By Lemma \ref{lemma: satellite loop mutations}, the fixed vertices are precisely those not in $Q_\gamma$ or $Q_i$. Denote the quiver of the remaining non-fixed vertices by $Q_{nf}$, i.e. the quiver obtained by deleting the vertices not in either $Q_\gamma$ or $Q_i$ and all adjacent arrows. 

To show that $Q_{nf}$ is a surface-type quiver, consider the vertices for which the sets $Q_\gamma$ and $Q_i$ intersect. Between every two vertices $v_{i_j}, v_{i_{j+1}}$, the quiver $Q_{nf}$ is described as follows: for vertices in $Q_{\gamma}$, we have a directed path with arrows oriented from left to right; for vertices in $Q_i$ we have a directed path from right to left. Together, these two oriented paths form an oriented cycle. The only remaining arrows are from $v_{i_{j}\pm 1}$ to the adjacent vertices in $Q_\gamma$, forming additional oriented 3-cycles. This pattern holds in general except for at the leftmost and rightmost section of the quiver $Q_{nf}$. The leftmost section corresponds to an unoriented cycle, as the face of the plabic fence corresponding to the leftmost vertex of $Q_\gamma$ is to the left of $v_{i_1}$. The rightmost section of $Q_{nf}$ corresponds to a directed path of $k-3$ vertices corresponding to the satellited braid $\sigma_1^k$.

The crucial observation is that each section of $Q_{nf}$ corresponds to a piece of a triangulated disk with possible punctures and boundary components with marked points. In particular, every oriented $\ell$-cycle appearing between two vertices $v_{i_j}$ and $v_{i_{j+1}}$ corresponds to an additional boundary component with $\ell-4$ of marked points on the boundary, or in the case of $\ell=4$, a single puncture. These pieces of the triangulation, depicted in Figure \ref{fig: ClusterReductionTriangulation}, are then glued together to form the desired surface.

\begin{figure}
    \centering
\includegraphics[width=.9\textwidth]{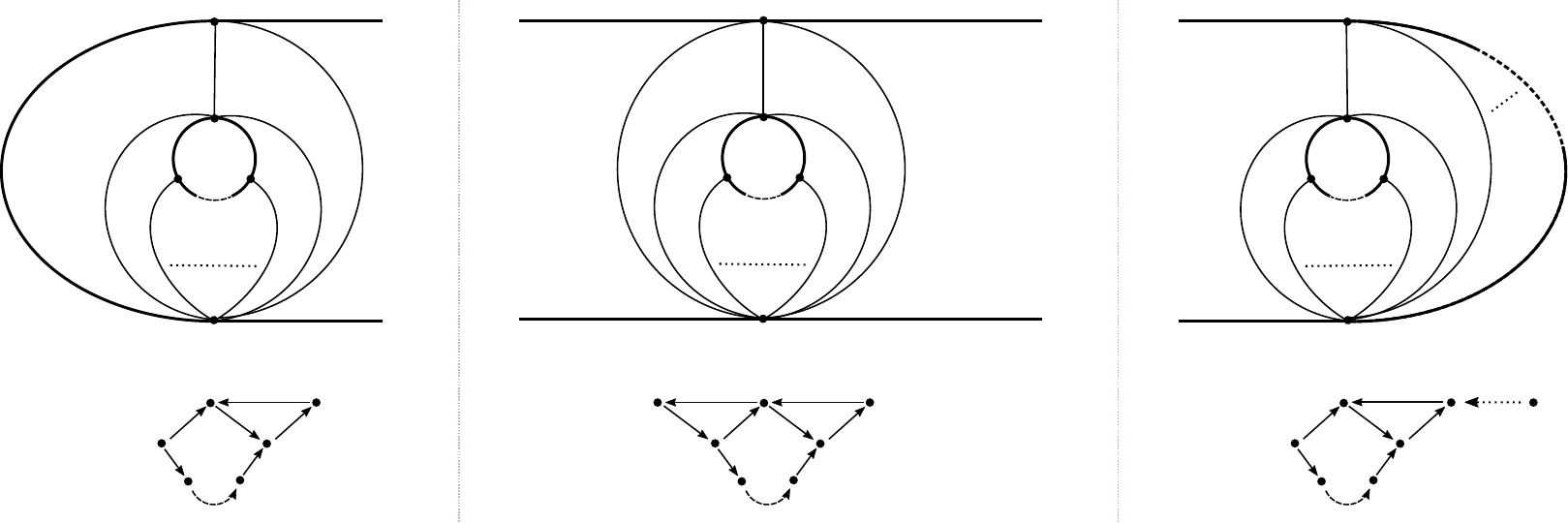}
    \caption{Pieces of a triangulation corresponding to the non-fixed portion of the intersection quiver of an initial filling of $\La(\beta\Delta^2, i, \sigma_1^k)$.}
    \label{fig: ClusterReductionTriangulation}
\end{figure}

Following Lemma \ref{lemma: satellite loop mutations}, we can compute the mutation sequence induced by $\vartheta$-loop. In the case of an individual $\ell$-cycle in $Q_{nf}$, this is readily seen to induce a partial Dehn twist about the corresponding boundary component.
\end{proof}

\begin{remark}
    When $k=2$, the satellited component of the link $\La(\beta\Delta_n^2, i, \sigma_1^2)$ is a two-stranded unlink, analogous to the case of $\tilde{D}_n$. Given that the combinatorics of plabic fences does not appear to capture the associated $\vartheta$ loop, computing the non-fixed quiver for such a loop is much more challenging. Nevertheless, one might expect to obtain a surface-type quiver in this case as well. 
\end{remark}

Lemma \ref{lemma: reduction to surface} provides a relatively straightforward combinatorial algorithm for evaluating the order of the induced action of a Legendrian $\vartheta$-loop, at least in the setting of rainbow closures of positive braids. Namely, the proof of Lemma \ref{lemma: reduction to surface} describes a process for obtaining a mapping class of a surface with boundary marked points and interior punctures from a Legendrian $\vartheta$-loop. The order of this mapping class is precisely the order of the induced action of the $\vartheta$-loop. This action has finite order if and only if the surface-type quiver obtained via cluster reduction corresponds to a disk with at most one interior puncture, i.e. the cluster algebra is of type $A_n$ or $D_n$. 

Equipped with Lemma \ref{lemma: reduction to surface}, we now prove Theorem \ref{thm: fixed pt converse} and the resulting corollaries.

\begin{proof}[Proof of Theorem \ref{thm: fixed pt converse}]
    Suppose that $\La=\La(\beta\Delta^2, i, \sigma_1^k)$ admits a $\vartheta$ loop whose induced action on $SM_1(\La)$ has infinite order. By Lemma \ref{lemma: reduction to surface}, the induced action of $\vartheta$ on the cluster reduction $\mathcal{A}'_{>0}$ is an infinite order tagged mapping class. Therefore, \cite[Theorem 3.8]{Ishibashi2018} implies that the induced action on $\mathcal{A}'_{>0}$ 
     is properly discontinuous. Since $\mathcal{A}'$ was obtained from $\FM(\La)$ by freezing the fixed cluster variables, it follows that the action of $\vartheta$ on $\FM(\La)_{>0}$ is properly discontinuous. In particular, $\vartheta$ has no fixed points in $\FM(\La)_{>0}$.
\end{proof}

As a result of Lemma \ref{lemma: reduction to surface}, we obtain an additional corollary, providing further similarities between cluster modular groups and mapping class groups. See also \cite[Corollary 6.5]{KaufmanGreenberg}. 

\begin{corollary}
    For any $\La\in \mathcal{H}$, the group $\SG(\FM(\La))$ contains a finite-index subgroup generated by cluster Dehn twists. 
\end{corollary}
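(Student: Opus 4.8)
The plan is to partition $\mathcal{H}$ according to the trichotomy finite / affine / extended affine type and, in each regime, to exhibit a finite-index subgroup of $\SG(\FM(\La))$ whose generators are loops whose induced cluster automorphisms are recognized as cluster Dehn twists through the reduction procedure of Lemma \ref{lemma: reduction to surface}. For \textbf{finite type} ($A_n, D_n, E_6, E_7, E_8$) the group $\SG(\FM(\La))$ is finite by Table \ref{tab: generating}; since a cluster Dehn twist has infinite order by Definition \ref{def: cluster Dehn twist}, no nontrivial subgroup can be generated by such elements, but the trivial subgroup already has finite index and is vacuously generated by the empty set of cluster Dehn twists, so there is nothing further to prove here.

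For \textbf{affine type} ($\tilde D_n, \tilde E_6, \tilde E_7, \tilde E_8$) I would work through the $T_{\bf n}$ presentation of Subsection \ref{sub: TnQuivers}. By the results of Kaufman and Greenberg \cite{KaufmanGreenberg}, $\SG(\FM(\La)) \cong \Gamma_\tau\rtimes\Aut(T_{\bf n})$ with $\Aut(T_{\bf n})$ finite, so $\Gamma_\tau=\langle\tau_1,\tau_2,\tau_3\rangle$ is finite index. The central step is to verify that the generators are cluster Dehn twists: by Lemma \ref{lemma: loops conjugate} the automorphisms $\tau_1,\tau_2$ are conjugate to the $\vartheta$-loops $\vartheta_1,\vartheta_2$, and by Lemma \ref{lemma: reduction to surface} the cluster reduction of such a loop is a tagged mapping class. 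The explicit description in that proof---each oriented cycle in $Q_{nf}$ inducing a partial twist about a boundary component, exactly as in Example \ref{ex: cluster reduction}---identifies this mapping class as a (fractional) boundary twist, which is a cluster Dehn twist by \cite{Ishibashi2020}. Finally, the relation $\tau_1^{n_1}=\tau_2^{n_2}=\tau_3^{n_3}$ of Theorem \ref{thm: TauRelation} forces a power of $\tau_3$ into $\langle\tau_1,\tau_2\rangle$, so $\langle\tau_1,\tau_2\rangle$ is already finite index in $\Gamma_\tau$; this exhibits the sought finite-index subgroup generated by the two cluster Dehn twists $\tau_1,\tau_2$.

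For \textbf{extended affine type}, namely $\La(E_7^{(1,1)})\cong\La(4,4)$ and $\La(E_8^{(1,1)})\cong\La(3,6)$, I would route the argument through Section \ref{sec: Torus Links} rather than through $\Gamma_\tau$, because here $\SG$ contains the non-virtually-abelian groups $\pi_1(\mathrm{Conf}(\S^2,4))$ and $\PSL_2(\Z)$. By Lemma \ref{lemma: sigma loop} the loops $\Sigma_i$ induce Fraser's braid generators $\sigma_i$, which are half-twists in the underlying spherical braid / mapping-class group; these are cluster Dehn twists, and the subgroup they generate is finite index since the remaining generators---the cyclic shift $\rho$, which is periodic, and the DT transformation, which contributes only a finite $\Z_2$ factor via $\DT^2=\rho^k$---have finite order modulo $\langle\sigma_i\rangle$.

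The principal obstacle I anticipate is verifying \emph{uniformly} that the cluster reductions of the chosen generating loops are genuinely (fractional) twists and not cluster pseudo-Anosov or otherwise reducible-but-not-twist classes, together with the finite-index bookkeeping that reconciles the virtually-abelian $\Gamma_\tau$ picture of the affine cases with the genuinely nonabelian extended-affine groups. I expect the cleanest resolution is to lean on the mapping-class-group identifications already invoked (Bridgeland--Smith \cite{BridgelandSmith_QuadDiff} and Ishibashi \cite{Ishibashi2018}), so that the classical fact that $\Mod_{tag}(\Sigma)$ is generated up to finite index by Dehn twists and half-twists transports directly to the statement ``generated by cluster Dehn twists,'' while the surface models from Lemma \ref{lemma: reduction to surface} and Table \ref{tab: generating} are used to confirm case-by-case that no pseudo-Anosov generator is required.
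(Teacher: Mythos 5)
Your proposal is correct and follows essentially the same route as the paper: the paper's proof simply cites the generating sets from the proof of Theorem \ref{thm: gens} (where every infinite-order generator is a $\vartheta$-loop, or a $\Sigma_i$-loop that coincides with one, generating a finite-index subgroup) and then applies Lemma \ref{lemma: reduction to surface} to identify these as cluster Dehn twists. Your case-by-case unpacking---trivial subgroup in finite type, $\langle\tau_1,\tau_2\rangle$ via Lemma \ref{lemma: loops conjugate} in affine type, and the $\Sigma_i$ in extended affine type---is exactly the content that proof implicitly invokes.
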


\begin{proof}
    In the proof of Theorem \ref{thm: gens} in Section \ref{sec: CMGs}, we describe a generating set of $\SG(\FM(\La))$ where all of the Legendrian loops inducing infinite order generators are $\vartheta$ loops and together they generate a finite-index subgroup of $\SG(\FM(\La))$. Applying Lemma \ref{lemma: reduction to surface} immediately implies the desired result.
\end{proof}

As a final application of Lemma \ref{lemma: reduction to surface}, we show that in the case of the Legendrian links of affine and extended affine type considered above, the action of the entire cluster modular group $\SG(\FM_1(\La), T)$ is properly discontinuous.

\begin{corollary}\label{cor: Properly Discontinuous}
For every $\La\in\mathcal{H}$, the action of $\SG(\FM(\La, T))$ on $\FM(\La, T)_{>0}$ is properly discontinuous.
\end{corollary}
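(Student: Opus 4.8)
The plan is to carry out a case analysis over the Dynkin types comprising $\mathcal{H}$, reducing in each case either to the finiteness of $\SG(\FM(\La))$ or to Ishibashi's proper discontinuity theorem for surface-type cluster algebras \cite[Theorem 3.8]{Ishibashi2018}. The organizing principle is that proper discontinuity passes between a group and its finite-index subgroups: if $H\leq \SG(\FM(\La))$ has finite index, then $H$ acts properly discontinuously on $\FM(\La)_{>0}$ if and only if $\SG(\FM(\La))$ does. First I would record this as a preliminary lemma; the proof is the standard coset argument, writing $\SG(\FM(\La))=\bigsqcup_{i=1}^m g_iH$ and observing that for compact $K$ the set $\{g : gK\cap K\neq\varnothing\}$ lies in the finite union of the sets $\{h\in H : hL_i\cap L_i\neq\varnothing\}$ with $L_i=K\cup g_i^{-1}K$, each finite by proper discontinuity of $H$. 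With this in hand the finite type cases $A_n, D_n, E_6, E_7, E_8$ are immediate, since by Table \ref{tab: generating} the cluster modular group is finite and any finite group acts properly discontinuously.

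For affine type I would separate $\tilde D_n$ from the exceptional affine types. Because $\FM(\La(\tilde D_n))$ is the surface-type cluster algebra of the twice-punctured disk $\D^2_{n-2,0,0}$, Ishibashi's theorem applies verbatim to the full modular group. For $\tilde E_6,\tilde E_7,\tilde E_8$ the groups $S_3\times\Z$, $\Z_2\times\Z$, and $\Z$ of Table \ref{tab: generating} are each virtually cyclic, containing a finite-index copy of $\Z$ generated by an infinite-order cluster automorphism. By Lemma \ref{lemma: loops conjugate} this generator may be taken to be a $\vartheta$-loop $\tau_i$, so by Lemma \ref{lemma: reduction to surface} its cluster reduction is an infinite-order tagged mapping class on a surface-type algebra $\mathcal{A}'$. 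Exactly as in the proof of Theorem \ref{thm: fixed pt converse}, Ishibashi's theorem gives proper discontinuity of the cyclic action on $\mathcal{A}'_{>0}$; since $\mathcal{A}'$ is obtained from $\FM(\La)$ by freezing the variables fixed by $\tau_i$, the equivariant projection $\FM(\La)_{>0}\to\mathcal{A}'_{>0}$ transfers proper discontinuity back to $\FM(\La)_{>0}$. Hence $\langle\tau_i\rangle$ acts properly discontinuously, and by the commensurability lemma so does all of $\SG(\FM(\La))$.

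The extended affine cases $E_7^{(1,1)}\cong\La(4,4)$ and $E_8^{(1,1)}\cong\La(3,6)$ are the main obstacle. Here $\SG(\FM(\La))$ is a finite extension of $\Mod(\Sigma_{0,4})$, respectively of $\PSL_2(\Z)\cong\Mod(\Sigma_{1,1})/\text{center}$, which is neither finite nor virtually cyclic, so the previous two reductions both fail; moreover, since $E_7^{(1,1)}$ and $E_8^{(1,1)}$ are exceptional finite-mutation types, $\FM(\La)_{>0}$ is not itself the cluster variety of a triangulated surface and Ishibashi's surface-type theorem cannot be invoked directly. My plan here is to leverage the explicit identification of the modular group with a punctured-surface mapping class group furnished by Theorem \ref{thm: TorusLinkCMG} together with \cite{fraser2018braid,CasalsGao}, and the classical fact that each of these mapping class groups acts properly discontinuously on the two-real-dimensional Teichm\"uller space $\mathbb{H}$ of the corresponding surface. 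The technical heart is to produce an $\SG$-equivariant comparison between $\FM(\La)_{>0}$ and this Teichm\"uller model — most naturally by exhibiting a finite-index subgroup whose common fixed cluster data cluster-reduces $\FM(\La)_{>0}$ onto $\mathbb{H}$ with the reduced action being the classical one — and to check that proper discontinuity survives the reduction. I expect verifying this equivariant reduction, rather than the affine bookkeeping, to be where the real difficulty lies, since unlike the affine exceptional types the relevant groups contain cluster pseudo-Anosov elements that fix no cluster data.
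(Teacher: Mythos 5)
Your treatment of the finite and affine cases tracks the paper's own proof: finite Dynkin type is dispatched by finiteness of the cluster modular group, and the affine types by exhibiting an infinite-order $\vartheta$-loop automorphism generating a finite-index subgroup, cluster-reducing it via Lemma \ref{lemma: reduction to surface}, invoking \cite[Theorem 3.8]{Ishibashi2018}, and transferring proper discontinuity back along the equivariant map to the reduction and then up from the finite-index subgroup. Your preliminary commensurability lemma is precisely the (unstated) mechanism the paper's two-sentence proof relies on, and treating $\tilde D_n$ directly as a surface-type algebra is a clean shortcut around the $\tau_1$ bookkeeping. Up to this point the proposal is correct and essentially the paper's argument, made more explicit.

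The gap is the extended affine case, which you flag but do not close: for $\La(E_7^{(1,1)})$ and $\La(E_8^{(1,1)})$ you only outline a program (an $\SG$-equivariant comparison of $\FM(\La)_{>0}$ with the Teichm\"uller space of $\Sigma_{0,4}$, resp.\ $\Sigma_{1,1}$) and explicitly defer the equivariance verification that you yourself identify as the technical heart. Since the corollary quantifies over all of $\mathcal{H}$, the proposal as written does not prove the statement. That said, your diagnosis of where the difficulty lies is sharper than the paper's own proof, which asserts that for every affine and extended affine $\La\in\mathcal{H}$ a single infinite-order $\vartheta$-loop generates a finite-index subgroup of $\SG(\FM(\La))$: this fails for $E_7^{(1,1)}$ and $E_8^{(1,1)}$, whose cluster modular groups $\pi_1(Conf(\S^2,4))\rtimes\Z_2$ and $\PSL_2(\Z)\rtimes\Z_6$ (Table \ref{tab: generating}) are not virtually cyclic, and these quivers are of exceptional finite mutation type rather than surface type, so Ishibashi's theorem also does not apply to them directly. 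The paper's closing remark after the corollary effectively concedes this for links admitting multiple $\vartheta$-loops. What is missing from your proposal --- and is at best implicit in the paper --- is a completed argument for these two links, for which your Teichm\"uller-theoretic plan, or a direct appeal to the faithful $\Mod(\Sigma_{0,4})$ and $\PSL_2(\Z)$ actions of \cite{CasalsGao, fraser2018braid}, would have to be carried out in full.
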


\begin{proof}
The statement is true for finite Dynkin types as these necessarily have finite cluster modular groups. 
For the remaining affine type and extended affine type cluster modular groups, the generating sets of $\SG(\FM(\La))$ given in the proof of Theorem \ref{thm: gens} above all contain at least one infinite order cluster automorphism that generates a finite index subgroup of $\SG(\FM(\La))$ and is generated by a Legendrian $\vartheta$ loop. Combining Lemma \ref{lemma: reduction to surface} with \cite[Theorem 3.8]{Ishibashi2018} then yields the desired result. 
\end{proof}

Note that the statement and method of proof of Corollary \ref{cor: Properly Discontinuous} holds for any Legendrian link $\La$ satisfying the property that $\SG(\FM(\La, T))$ has a finite index subgroup generated by a $\vartheta$ loop. However, for Legendrian links admitting multiple $\vartheta$ loops this property does not appear to be satisfied in general. This suggests that an alternative approach may be required to establish that the action of $\SG(\FM(\La, T))$ on $\FM(\La, T)_{>0}$ is properly discontinuous.

\bibliographystyle{alpha}
\bibliography{main.bib}

\appendix

\section{Mutation sequence computation via Legendrian weaves}\label{sec: appendix}
In this appendix, we show that the Legendrian loop $\vartheta_2$ defined for $\La(\tilde{D}_n)$ induces the cluster automorphism $\tilde{\vartheta}_2=(1, 4; (1\,4\,0\,3))$, computed as a sequence of mutations in the initial quiver coming from the plabic fence $\mathbb{G}(\tilde{D}_n)$ pictured in Figure \ref{fig: DnPlabicFence}. We do this by mutating at cycles $\gamma_1$ and $\gamma_4$ in the initial weave $\w(\mathbb{G}(\tilde{D}_6))$ -- constructed following \cite[Section 3.3]{CasalsWeng} -- and then showing that the resulting weave simplifies to the concatenation of the trace of $\vartheta_2$ with the initial weave, up to relabeling the homology cycles. The example readily generalizes to $\La(\tilde{D}_n)$ by replacing the dashed blue short {\sf I}-cycle by $n-5$ blue short {\sf I}-cycles.

In the figures below, $\mathbb{L}$-compressing cycles are color coded as follows: $\gamma_0$ is light blue, $\gamma_1$ is orange, $\gamma_2$ is light green, $\gamma_3$ is purple, $\gamma_4$ is pink, $\gamma_5$ is yellow, and $\gamma_6$ is the dashed blue short {\sf I}-cycle. When an edge of the 4-graph carries two cycles, as in the second and third weaves of Figure \ref{fig:LoopWeave1}, we choose one of the colors for the edge itself and then highlight the edge in the color corresponding to the additional homology cycle. The numerals correspond to Legendrian surface Reidemeister moves -- Legendrian isotopies that result in the combinatorial changes of fronts depicted in Figure \ref{fig:Moves}. 
	\begin{center}\begin{figure}[h!]{ \includegraphics[width=\textwidth]{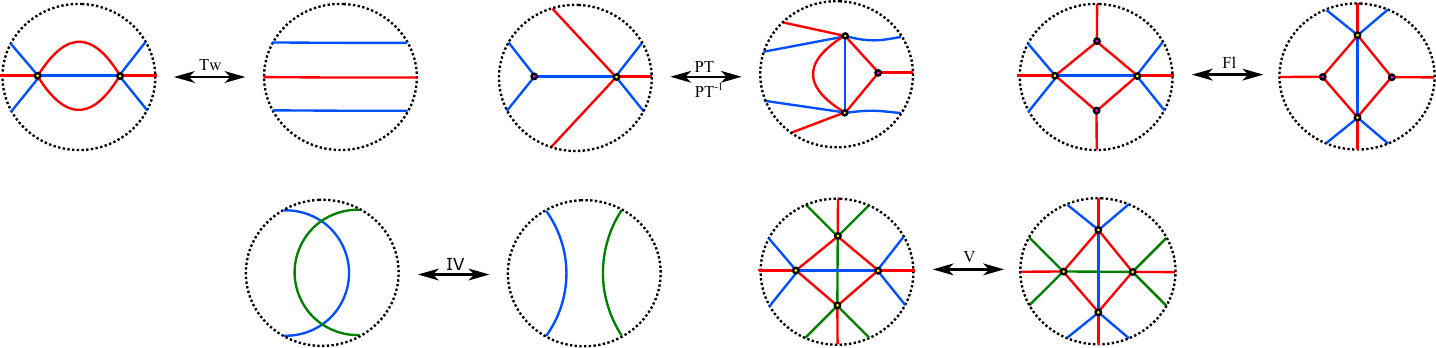}}\caption{Legendrian Surface Reidemeister moves for $N$-graphs. Clockwise from top left, a candy twist, a push-through, a flop, and two additional moves, denoted by I, II, III, IV, and V respectively.} 
		\label{fig:Moves}\end{figure}
	\end{center}
We freely apply Move IV by passing edges labeled by $\sigma_3$ (colored dark green) and edges labeled by $\sigma_1$ (colored blue) over each other. See Figure 26 of \cite{ABL22} for a list describing the combinatorial behavior of homology cycles under these Legendrian surface Reidemeister moves. In cases where multiple push-through moves are used, we omit some intermediate steps when the computation is otherwise straightforward. 

Note that the choice of braid word for $\Delta^2$ appearing in the weave $\w(\mathbb{G}(\tilde{D}_n))$ differs from the choice of braid word for $\Delta^2$ appearing in the Legendrian front pictured in Figure \ref{fig:LoopEx}. The front for $\partial\w(\mathbb{G}(\tilde{D}_n))$ differs from the front appearing in Figure \ref{fig:LoopEx} by a sequence of Reidemeister III moves collecting a pair of $\sigma_1$ and $\sigma_3$ crossings at the left of the braid word for $\Delta^2$. The sheaf moduli obtained from the two fronts are isomorphic.

\begin{figure}
    \centering
    \includegraphics[width=\textwidth]{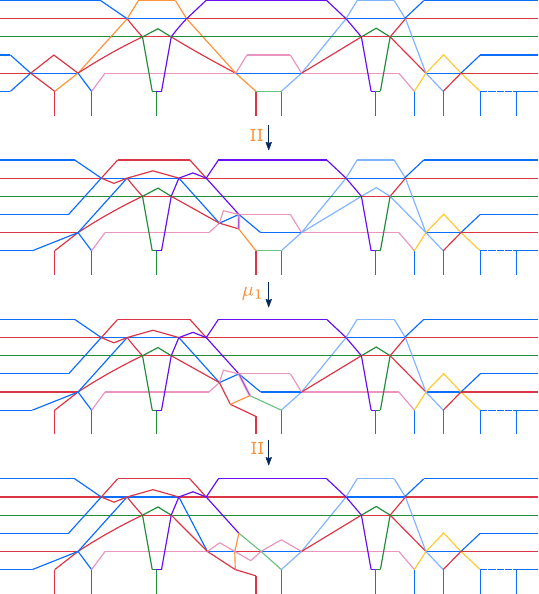}
    \caption{From top to bottom: a sequence of three push-throughs to isolate $\gamma_1$ as a short {\sf I}-cycle; mutation at $\gamma_1$; a push through to remove the edge carrying cycles $\gamma_1$ and $\gamma_4$.}
    \label{fig:LoopWeave1}
\end{figure}

\begin{figure}
    \centering
    \includegraphics[width=\textwidth]{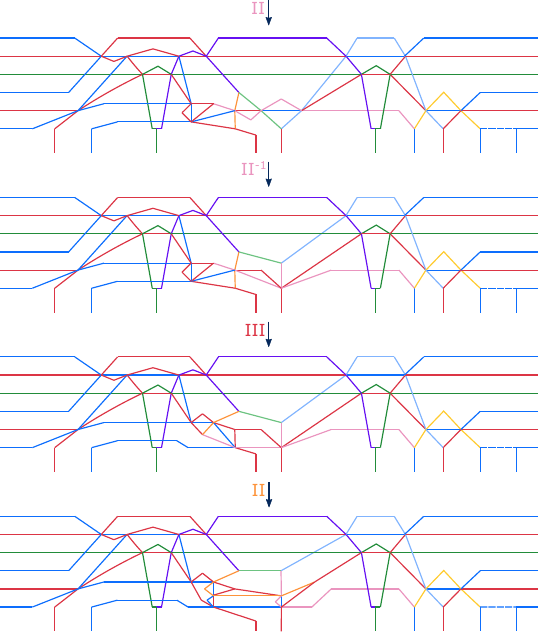}
    \caption{A series of weave equivalence moves designed to produce $\gamma_4$ as a long {\sf I}-cycle.}
    \label{fig:LoopWeave2}
\end{figure}
\begin{figure}
    \centering
    \includegraphics[width=\textwidth]{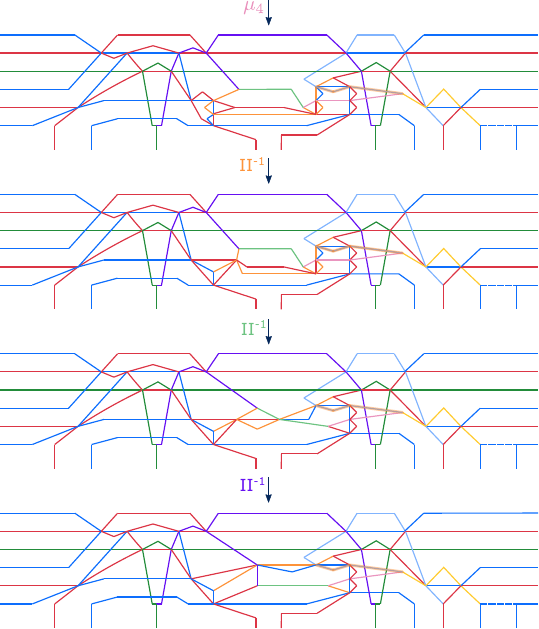}
    \caption{Mutation at $\gamma_4$ and a sequence of push-throughs simplifying $\gamma_1$.}
    \label{fig:LoopWeave3}
\end{figure}

\begin{figure}
    \centering
    \includegraphics[width=\textwidth]{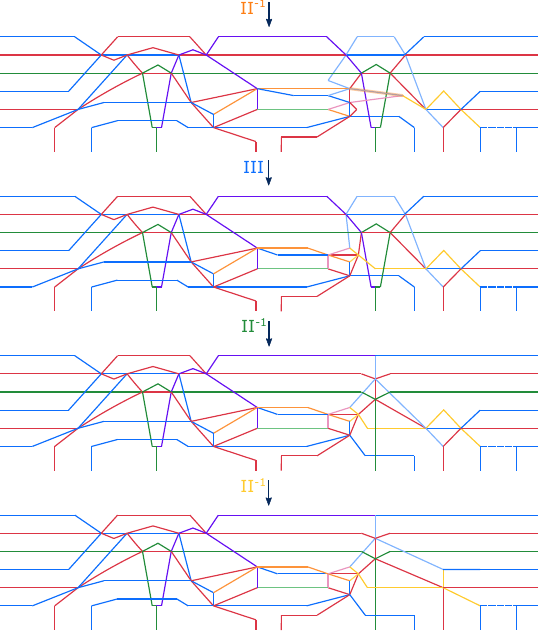}
    \caption{A sequence of weave equivalence moves removing the geometric intersections between $\gamma_1$ and $\gamma_0$.
    }
    \label{fig:LoopWeave4}
\end{figure}

\begin{figure}
    \centering
    \includegraphics[width=\textwidth]{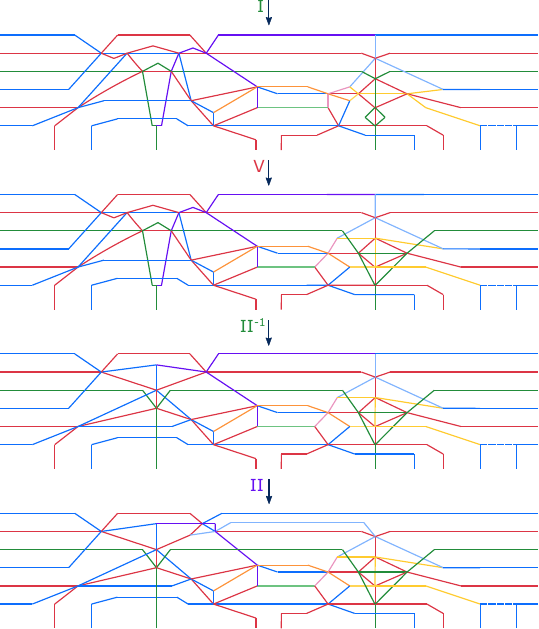}
    \caption{From top to bottom: a candy twist to introduce the necessary $\sigma_2-\sigma_3$ hexavalent vertices appearing in $\vartheta_2$; Move V; simplification of the left half of the weave starting with a pair of push-throughs involving the leftmost cycle in the weave; a push-through involving $\gamma_0$ and $\gamma_3$.  }
    \label{fig:LoopWeave5}
\end{figure}

\begin{figure}
    \centering
    \includegraphics[width=\textwidth]{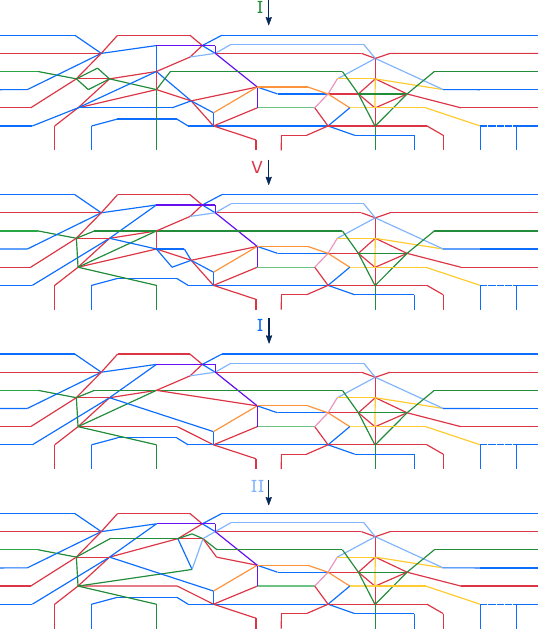}
    \caption{A series of weave equivalence moves continuing to simplify the left half of the weave}
    \label{fig:LoopWeave6}
\end{figure}

\begin{figure}
    \centering
    \includegraphics[width=\textwidth]{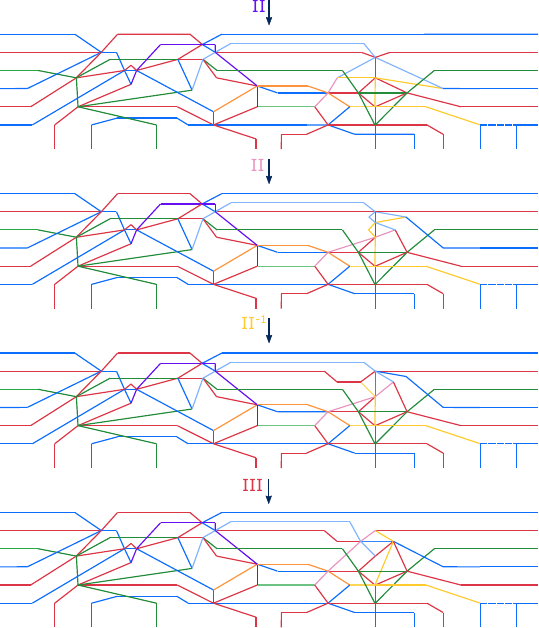}
    \caption{A series of weave equivalence moves continuing to simplify the right half of the weave}
    \label{fig:LoopWeave7}
\end{figure}

\begin{figure}
    \centering
    \includegraphics[width=\textwidth]{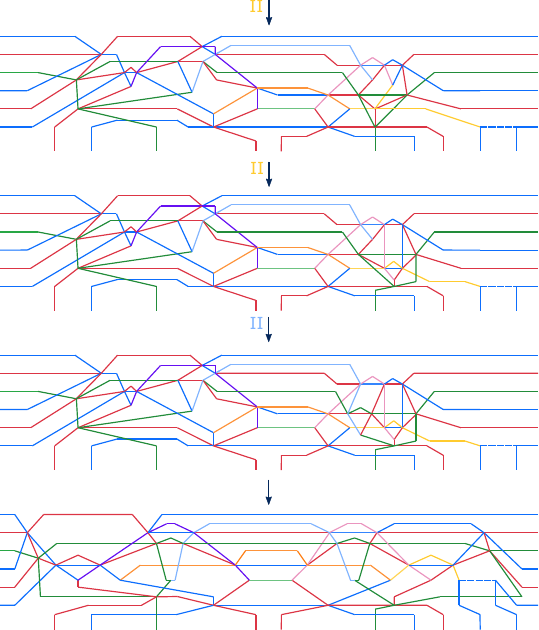}
    \caption{The final weave appearing here differs from the one above it by planar isotopy and freely applying Move IV. It is readily identified with the concatenation of the trace of $\vartheta_2$ to the initial weave appearing in Figure \ref{fig:LoopWeave1} up to relabeling the cycles by the permutation (1\,4\,0\,3).}
    \label{fig:LoopWeave8}
\end{figure}

\end{document}